\documentclass[12pt]{extarticle}
\usepackage{amsmath, amsthm, amssymb, color}
\usepackage[colorlinks=true,linkcolor=blue,urlcolor=blue]{hyperref}
\usepackage{graphicx}
\usepackage{caption}
\usepackage{mathtools}
\usepackage{enumerate}
\usepackage{verbatim}
\usepackage{tikz,tikz-cd,tikz-3dplot}
\usepackage{amssymb}
\usetikzlibrary{matrix}
\usetikzlibrary{arrows}
\usepackage{algorithm}
\usepackage[noend]{algpseudocode}
\usepackage{caption}
\usepackage[normalem]{ulem}
\usepackage{subcaption}
\tolerance 10000
\headheight 0in
\headsep 0in
\evensidemargin 0in
\oddsidemargin \evensidemargin
\textwidth 6.5in
\topmargin .25in
\textheight 8.8in
\synctex=1
\usepackage{makecell}
\usepackage{array}
\usepackage[scr=rsfs]{mathalpha}
\usepackage{multirow,array}

\DeclareFontEncoding{LS1}{}{}
\DeclareFontSubstitution{LS1}{stix}{m}{n}
\DeclareSymbolFont{symbols2}{LS1}{stixfrak} {m} {n}
\DeclareMathSymbol{\operp}{\mathbin}{symbols2}{"A8}

\newtheorem{theorem}{Theorem}
\newtheorem{proposition}[theorem]{Proposition}

\newtheorem{corollary}[theorem]{Corollary}

\theoremstyle{definition}

\newtheorem{remark}[theorem]{Remark}

\newtheorem{example}[theorem]{Example}

\numberwithin{theorem}{section}

\newcommand{\bvec}[1]{\mathbf{#1}}

\newcommand{\vr}{\bvec{r}}

\newcommand{\vx}{\bvec{x}}

\newcommand{\vR}{\bvec{R}}

\newcommand{\PP}{\mathbb{P}}
\newcommand{\RR}{\mathbb{R}}

\newcommand{\CC}{\mathbb{C} }

\title{\bf Algebraic Varieties in Quantum Chemistry}

\author{Fabian M. Faulstich, Bernd Sturmfels and Svala Sverrisdóttir}

\date{}
\begin{document}
\maketitle

\begin{abstract} \noindent
  We develop algebraic geometry for
coupled cluster (CC) theory of quantum many-body systems.
  The high-dimensional eigenvalue problems that encode the electronic Schr\"odinger equation 
  are approximated by a hierarchy of polynomial systems at various levels of truncation.
  The exponential parametrization of the eigenstates
  gives rise to truncation varieties. These generalize
Grassmannians in their Pl\"ucker embedding.
We explain how to derive Hamiltonians,
we offer a detailed study of truncation varieties and their CC degrees,
and we present the state of the art in
solving the CC equations.
\end{abstract}
	
\section{Introduction}

Electronic structure theory is a powerful quantum mechanical framework 
for investigating the intricate behavior of electrons within molecules and crystals. At the core lies the interaction between particles, specifically the electron-electron and electron-nuclei interactions.
Embracing the essential quantum physical effects, this theory is the foundation for {\it ab initio} electronic structure calculations performed by many researchers in chemistry and related fields, complementing and supplementing painstaking laboratory work.
With its diverse applications in chemistry and materials science, electronic structure theory holds vast implications for 
the mathematical sciences.
Integrating methods from algebra and geometry
 into this field leads to the development of precise and scalable numerical methods, enabling extensive {\it in silico} studies of chemistry for e.g. sustainable energy, green catalysis, and nanomaterials. 
The synergy between  fundamental mathematics and electronic structure theory  
offers the potential for groundbreaking advancements in addressing these global challenges.

The electronic structure problem  is the innocent-looking eigenvalue problem in (\ref{eq:eigenvector}).
This has been under intense investigation since the 1920s, but it is still a formidable problem of contemporary science. 
The governing partial differential equation, i.e.~the electronic Schrödinger equation~(\ref{eq:schroedinger}), 
has $3d$ degrees of freedom, where $d$ is the number of electrons~\cite{Y}. 
As $d$ increases, the problem size of
  (\ref{eq:eigenvector}) grows exponentially.
Efficient and tractable numerical schemes are essential for approximating the behavior of complex atoms and molecules~\cite{LJ}. 
One class of widely employed high-accuracy methods rests on {\em coupled cluster theory}~\cite{HJO}, which is considered the gold standard of quantum chemistry.
Despite considerable successes, this approach has its limitations, leaving
many opportunities for further developments.

In this article, we develop algebraic geometry for coupled cluster theory, following \cite{FO}.
Central to our investigations are the {\em coupled cluster (CC) equations}. 
This system of polynomial equations had already been studied 
by the quantum chemistry community in the 1990s.
Researchers used homotopy continuation 
for numerical solutions, and they focused on  enhancing computational capabilities. 
 For details on this history
  we refer the interested reader to~\cite[Section 1-3]{KP1} and the references therein, or~\cite[Sections 1.1 and 1.2]{FO}.
   In contrast, our present study goes beyond advancements in computations, delving deeply into  
 the mathematical structures that underlie CC theory.
  It rests on
 nonlinear algebra \cite{BBCHLF, BHS, MS, Stu}.
 
On the geometry side, our point of departure is the Grassmannian ${\rm Gr}(d,n)$ with its $\binom{n}{d}$ Pl\"ucker coordinates. 
These represent quantum states for $d$ electrons in $n$ spin-orbitals.
We introduce a family of projective varieties $V_\sigma$ in the same ambient space $\PP^{\binom{n}{d}-1}$, one for each subset $\sigma$ of $[d]= \{1,2,\ldots,d\}$.
The singleton $\sigma = \{1\}$ yields the Grassmannian $ {\rm Gr}(d,n)$.

We formulate the coupled cluster equations as a truncated eigenvalue problem on  $V_\sigma$. 
The truncation varieties $V_\sigma$ for 
 $\sigma = \{1\}, \{2\}, \{1,2\},\{1,2,3\}$
correspond to the CC variants CCS, CCD, CCSD, CCSDT, explained in e.g.~\cite{FL, FO}.
The number of complex solutions to the CC equations for a general Hamiltonian $H$ is the {\em CC degree},
denoted ${\rm CCdeg}_{d,n}(\sigma)$.
This invariant reveals the number of paths that need to be tracked for finding all solutions.

Experts in nonlinear algebra will find
these concepts to be consistent with 
notions they are familiar with.
 The truncated eigenvalue problem on $V_\sigma$
is reminiscent of the theory of eigenvectors for tensors \cite[Section 9.1]{MS}.
The CC degree is an analog to the Euclidean distance (ED) degree \cite[Section 2]{Stu} 
and to the maximum likelihood (ML) degree \cite[Section~3]{Stu}.

The present article launches an entirely new line of research. 
By contrast, previous mathematical investigations of CC theory were performed within a functional analytic framework. In that framework, Schneider performed the first local analysis of CC theory in 2009 which was based on Zarantonello’s lemma~\cite{schneider2009analysis}. This approach was subsequently extended~\cite{rohwedder2013continuous,rohwedder2013error} and applied to different CC variants~\cite{laestadius2018analysis,laestadius2019coupled,faulstich2019analysis}.
Later Csirik and Laestadius established a more versatile framework for analyzing general CC variants which uses topological degree theory~\cite{csirik2023coupled1,csirik2023coupled2}.   
Recent numerical analysis results regarding single reference CC were established by Hassan, Maday, and Wang using the invertibility of the CC Fréchet derivative~\cite{hassan2023analysis}.

We now summarize the organization and contributions of this paper.
In Section \ref{sec2} we present the exponential parametrization which expresses the quantum states in terms of the
cluster amplitudes.
This map is invertible.
Theorem \ref{thm:UBP} gives formulas for all coordinates of the forward map and the backward map.
This involves a master polynomial of degree $d$ whenever $n=2d$.
For instance, for $d=2$, this polynomial is the Pl\"ucker quadric $\psi_{12} \psi_{34} - \psi_{13} \psi_{24} + \psi_{14} \psi_{23}$. In general, its monomials correspond to uniform block permutations~\cite{OSSZ}.

Section \ref{sec3} introduces the truncation variety $V_\sigma$ which lives
 in the projective space $\PP^{\binom{n}{d}-1}$.
Its restriction to an affine chart is 
  a complete intersection, defined by some coordinates of the backward map.
  The homogeneous prime ideal of $V_\sigma$ is found by saturation (Theorem~\ref{thm:primeideal}).
Theorem \ref{thm:grassmann} reveals the Grassmannian for $\sigma = \{1\}$.
Proposition \ref{prop:duality} features particle-hole symmetry $(d,n) \leftrightarrow (n{-}d,n)$, and
Theorem \ref{thm:linear} identifies all $\sigma$ for which $V_\sigma$
is a linear space.

Section \ref{sec4} starts from high school chemistry. 
It explains the discretization process in coupled cluster theory.
From the electronic Schr\"odinger equation we derive the Hamiltonian $H$, a symmetric matrix 
of size $\binom{n}{d} \times \binom{n}{d}$, which serves as the parameter in the CC equations.
The molecule
lithium hydride (LiH), with $d{=}4$ electrons in $n{=}8$ orbitals, is
our running~example. The Hamitonians derived in Example~\ref{ex:morerunning}
furnish the input data for Examples \ref{ex:ending48} and \ref{ex:ending38}.

Our theme in Section \ref{sec5} is the CC equations.
We define them in (\ref{eq:CC1})--(\ref{eq:trunceigen}) via a rank constraint  on $V_\sigma$, and we give a reformulation in terms of cluster amplitudes in (\ref{eq:formulation1})-(\ref{eq:formulation1b}).
 For most CC variants  used in computational chemistry,
this agrees with the traditional formulation~\cite{HJO}.
But our equations differ for some others (Theorem \ref{thm:twoformulations}).
Starting from the general bound in Theorem~\ref{thm:generalbound}, we offer a detailed study of the CC degrees of truncation~varieties.

In Section \ref{sec6} we turn to numerical solutions of the CC equations, both for generic Hamiltonians
and for systems derived from chemistry.
We present computations with the software {\tt HomotopyContinuation.jl} \cite{BT},
together with its certification feature \cite{BRT}.
The use of monodromy loops is essential.
Our findings show that the new theory leads to considerable practical advances.
This is documented in Examples \ref{ex:complexity} and \ref{ex:scaling}.
Examples \ref{ex:ending48} and \ref{ex:ending38}
offer case studies for lithium hydride (LiH) where $d=4,n=8$,
and for lithium (Li) where $d=3,n=8$.
For a comparison with previous work,
 \cite[Section~6]{FO} reports that CCSD with three electrons in six spin-orbitals ``supersedes the abilities of state-of-the-art algebraic geometry software'', and \cite[Theorem 4.10]{FO} offers the upper bound $2^{27} = 134217728$ for ${\rm CCdeg}_{3,6}(\{1,2\})$.
Yet, the true CC degree is $ 55$, by Proposition~\ref{prop:offby}. 
It is now instantaneous to solve these CC equations.
In short, new algebraic geometry leads to progress in practice.

\section{Exponential Parametrization}
\label{sec2}

We work in the vector space $\mathcal{H} = \wedge_d \RR^n$
with its standard basis vectors $e_I = e_{i_1} \wedge e_{i_2} \wedge  \cdots \wedge e_{i_d}$.
In this notation, $I=(i_1< i_2 <\ldots<i_d) \in \binom{[n]}{d}$ is a subset of $[n]$ of cardinality $d$ whose elements are always written in (increasing) order.
Here $d \leq n$ are positive integers.
The {\em reference state} is the first basis vector $e_{[d]}$ for $[d] = \{1,2,\ldots,d\}$.
Vectors in $\mathcal{H}$ are called {\em quantum states} and they are written uniquely 
as linear combinations of the basis vectors:
$$ \psi \,\,= \sum_{I \in \binom{[n]}{d}} \psi_I  \, e_I . $$
Motivated by nonlinear algebra \cite[Chapter 5]{MS},
we call $\psi_I$ the {\em Pl\"ucker coordinates}.
Sometimes it is preferable to write the Pl\"ucker coordinates as
$c_{\alpha,\beta}$, where $\alpha $ is a subset of $[d]$ and $\beta $ is a subset of $[n] \backslash [d] = \{d+1,\ldots,n\}$ of the same cardinality $|\alpha| = |\beta|$.
The $c_{\alpha,\beta}$ are known as {\em configuration interaction coefficients}
in quantum chemistry.
The identification between these two systems of coordinates on 
the space of quantum states $\mathcal{H}
= \wedge_d \RR^n$ is  as follows:
 \begin{equation}
 \label{eq:cpsi} \qquad c_{\alpha,\beta} \,=\, \psi_I
 \qquad {\rm where} \quad I = ([d] \backslash \alpha) \cup \beta. 
 \end{equation}
We think of the $\psi_I$ as the $d \times d$ minors of a $d \times n$ matrix,
and we think of the $c_{\alpha,\beta}$ as the minors of all sizes
in a $d \times (n-d)$ matrix. These two sets have the same cardinality 
$$ \binom{n}{d} \,\,\,\, = \, \sum_{k=0}^{{\rm min}(d,n-d)}\binom{d}{k} \binom{n-d}{k}. $$

The title of this section refers to a birational map, defined shortly, between two copies of $\RR^{\binom{n}{d}}$.
It restricts to a polynomial map with polynomial inverse on the affine hyperplane
$$ \mathcal{H}' \,\,\,= \,\,\, \{ \psi \in \mathcal{H} \,:\, \psi_{[d]} = 1 \} \,\,\,
\simeq \,\,\,  \RR^{\binom{n}{d}-1}.$$
Later on, when we come to algebraic varieties, we shall pass from the
vector space $\mathcal{H}$ to the projective space $\PP^{\binom{n}{d}-1} = \PP(\mathcal{H})$.
This is the projective closure of the
affine space $\mathcal{H'} $.
Thus the above coordinates $\psi_I$ and $c_{\alpha,\beta}$ also serve as homogeneous coordinates on
$\PP^{\binom{n}{d}-1}$.
See \cite[Chapter 8]{CLO}
for basics on projective algebraic geometry 
with a view toward computation.

To define the exponential parametrization, we introduce our second
vector space $\mathcal{V}$.
This is isomorphic to $\mathcal{H}$, with
coordinates indexed by $\binom{[n]}{d}$.
The elements of $\mathcal{V}$ are called {\em cluster amplitudes}
and we denote them by
$x = ( x_I )_{I \in \binom{[n]}{d}}$.
The cluster amplitudes $x$ also have
 alternate coordinates that are indexed by minors
of a $d \times (n-d)$-matrix. As in (\ref{eq:cpsi}), we~set
\begin{equation}
 \label{eq:tx} 
 \qquad t_{\alpha,\beta} \,=\, x_I
 \qquad {\rm where} \quad I = ([d] \backslash \alpha) \cup \beta. 
 \end{equation}

The {\em level} of a coordinate $\psi_I$ or $x_I$ is
defined as the cardinality of $I \backslash [d]$. 
Equivalently, the level of $c_{\alpha,\beta}$ or $t_{\alpha,\beta}$  equals
$|\alpha| = |\beta|$. For example,
for $d=3$ and $n=6$, each of the spaces 
$\mathcal{H}$ and $\mathcal{V}$ has $20$ coordinates:
one of level $0$,
nine of level $1$,
nine of level $2$,
and one of level $3$:
$$ \begin{matrix}
\psi_{123} = c_\emptyset,
 \psi_{124} = c_{3,4}, 
 \psi_{125} = c_{3,5}, \ldots,
 \psi_{136} = c_{2,6},
 \psi_{145} = c_{23,45}, \ldots,
\psi_{356} = c_{12,56}, 
\psi_{456} = c_{123,456}, \\
x_{123} = t_\emptyset,\,
 x_{124} = t_{3,4}, 
 x_{125} = t_{3,5}, \ldots,\,
 x_{136} = t_{2,6},\,
 x_{145} = t_{23,45}, \ldots,
x_{356} = t_{12,56},\, 
x_{456} = t_{123,456}.
\end{matrix}
 $$
The term level refers to the
excitation level of the electrons in a chemical system.

Our workhorse is the nonlinear coordinate transformation between 
quantum states and cluster amplitudes. The basic ingredient is 
a lower-triangular matrix $T(x)$ of square format
$\binom{n}{d} \times \binom{n}{d}$.
The entry of $T(x)$ in row $J$ and column $I$ is zero
unless $I \backslash J \subseteq [d]$ and
$(J \backslash I) \cap [d] = \emptyset$. If this holds then
the matrix entry is $\,\pm \,t_{I \backslash J ,\, J \backslash I}$,
where the sign is defined as follows.
Similarly to $I$, the sets $J, \, I \backslash J, \,J \backslash I$ and $I \cap J$
are subsets whose elements are written in (increasing) order. To be precise,
if $I = (i_1< i_2 < \cdots<i_d)$,
then the sequence $(I \cap J, I \backslash J)$ is a permutation of $I$.
The sign of $t_{I\backslash J, J\, \backslash I}$ in $T(x)_{J,I}$
is the sign of the permutation $I \mapsto (I \cap J, I  \backslash J)$
times the sign of the permutation $J \mapsto (I \cap J, J  \backslash I)$.

Using $x$-coordinates, the entry of $T(x)$ in row $J$ and column $I$
equals the above sign times
$$\qquad \qquad x_K \,\, = \,\, t_{I \backslash J ,\, J \backslash I},
\qquad {\rm where} \quad K = [d] \backslash (I \backslash J) \cup (J \backslash I).
$$
In conclusion,  $T(x)$ is a well-defined lower-triangular matrix
of size $\binom{n}{d} \times \binom{n}{d}$
that depends linearly on the cluster amplitudes $x$.
This matrix represents the {\em cluster operator} in \cite{FL, FO}.

\begin{example}[$d=2,n=5$] \label{ex:zweifuenf1}
The lower-triangular $10 \times 10$ matrix defined above equals
$$  T(x) \,=\,
\begin{footnotesize}
\begin{bmatrix}
 0 &    0 &    0 &    0 &   0 &    0 &    0 &  0 & 0 \,& 0 \,\,\, \\
 x_{13} &   0 &    0 &    0 &   0 &    0 &    0 &  0 & 0 \,& 0 \,\,\, \\
 x_{14} &   0 &    0 &    0 &   0 &    0 &    0 &  0 & 0 \,& 0 \,\,\, \\
 x_{15} &   0 &    0 &    0 &   0 &    0 &    0 &  0 & 0 \, & 0 \,\,\, \\
 -x_{23} &  0 &    0 &    0 &   0 &    0 &    0 &  0 & 0 \, & 0 \,\,\, \\
 -x_{24} &  0 &    0 &    0 &   0 &    0 &    0 &  0 & 0 \, & 0 \,\,\, \\
  -x_{25} &  0 &    0 &    0 &   0 &    0 &    0 &  0 & 0 \, & 0 \,\,\, \\
   x_{34} &  -x_{24} & x_{23} &   0 &  -x_{14} & x_{13} &   0 &  0 & 0 \, & 0 \,\,\, \\
   x_{35} &  -x_{25} &  0 &   x_{23} & -x_{15} &  0 &   x_{13} & 0 & 0 \, & 0 \, \,\,\\
   x_{45} &   0 & \!\! -x_{25} & x_{24} &  0 &  \!\!  -x_{15} & x_{14} & 0 & 0 \, & 0 \,\,\,
   \end{bmatrix}.
\end{footnotesize}
$$                     
The level zero variable $x_{12}$ does not appear. Three variables 
$x_{34}, x_{35}, x_{45}$ have level two.
\end{example}

The lower-triangular matrix $T(x)$ is  nilpotent of order $d$.
This is shown in \cite[Section 3.2]{FO} and also follows from equation (\ref{eq:T(t)}).
Hence, the matrix exponential is the finite~sum
\begin{equation}
\label{eq:expT}
 {\rm exp}(T(x)) \, = \,\, \sum_{k=0}^d \frac{1}{k!}\, T(x)^k. 
 \end{equation}
In particular, the entries of the matrix ${\rm exp}(T(x))$
 are polynomials in $x$ of degree at most $d$.

\begin{example}[$d=2,n=5$] \label{ex:zweifuenf2}
The exponential of the matrix in Example \ref{ex:zweifuenf1} equals
\begin{equation}
\label{eq:tenbyten}  {\rm exp}(T(x)) \,=\,
\begin{footnotesize}
\begin{bmatrix}
 1 &    0 &    0 &    0 &   0 &    0 &    0 &  0 & 0 \,& 0 \,\,\, \\
 x_{13} &   1 &    0 &    0 &   0 &    0 &    0 &  0 & 0 \,& 0 \,\,\, \\
 x_{14} &   0 &    1 &    0 &   0 &    0 &    0 &  0 & 0 \,& 0 \,\,\, \\
 x_{15} &   0 &    0 &    1 &   0 &    0 &    0 &  0 & 0 \, & 0 \,\,\, \\
 -x_{23} &  0 &    0 &    0 &   1 &    0 &    0 &  0 & 0 \, & 0 \,\,\, \\
 -x_{24} &  0 &    0 &    0 &   0 &    1 &    0 &  0 & 0 \, & 0 \,\,\, \\
  -x_{25} &  0 &    0 &    0 &   0 &    0 &    1 &  0 & 0 \, & 0 \,\,\, \\
 x_{14} x_{23} {-} x_{13} x_{24} +   x_{34} &  -x_{24} & x_{23} &   0 &  -x_{14} & x_{13} &   0 &  1 & 0 \, & 0 \,\,\, \\
 x_{15} x_{23} {-} x_{13} x_{25} + x_{35} &  -x_{25} &  0 &   x_{23} & -x_{15} &  0 &   x_{13} & 0 & 1 \, & 0 \, \,\,\\
 x_{15} x_{24} {-} x_{14} x_{25} +  x_{45} &   0 & \!\! -x_{25} & x_{24} &  0 &  \!\!  -x_{15} & x_{14} & 0 & 0 \, & 1 \,\,\,
   \end{bmatrix}.
\end{footnotesize}
\end{equation}
The following observation will be important later on:
If we set the level two parameters to zero, i.e.~$x_{34} = x_{35} = x_{45} = 0$,
then the first column consists of all $10$ maximal minors of 
$$
\begin{bmatrix}
\,\,\,1 & \,\,0\, &  x_{23} & x_{24} & x_{25} \, \,\\
\,\,\,0 & \,\,1\, &  x_{13} & x_{14} & x_{15} \,\,
\end{bmatrix} \quad = \quad
 \begin{bmatrix}
\,\,\,1 & \,\,0\, & t_{1,3} & t_{1,4} & t_{1,5} \, \,\\
\,\,\,0 & \,\,1\, & t_{2,3} & t_{2,4} & t_{2,5} \,\,
\end{bmatrix}. $$
Thus the Grassmannian ${\rm Gr}(2,5) \subset \PP^9$ makes an appearance in
the first column of ${\rm exp}(T(x))$.
\end{example}

Returning to general $d$ and $n$, the {\em exponential parametrization} is the map
\begin{equation} \label{eq:xtopsi} \qquad
\mathcal{V} \,\rightarrow \,\mathcal{H} , \, x \,\mapsto \,\psi \, , \qquad {\rm where} \quad
 \psi \, = \, {\rm exp}(T(x))   \, e_{[d]} . \end{equation}
 Here $e_{[d]}$ is the reference state in $\mathcal{H} \simeq \RR^{\binom{n}{d}}$, i.e.~the first
 standard basis vector.
 The transformation (\ref{eq:xtopsi}) gives a formula for
 the quantum states $\psi$ in terms of the
cluster amplitudes $x$. To be precise, each of the
$\binom{n}{d}$ coordinates $\psi_I$ is 
 a polynomial $\psi_I(x)$ in the $\binom{n}{d}$ unknowns $x_J$.
 
 In the definition (\ref{eq:xtopsi}),  we had assumed that $x_{[d]} = 1$ and  $\psi_{[d]} = 1$.
 Geometrically, this means that we
work in the affine spaces $\mathcal{V}'$ and $\mathcal{H}'$, both of which are identified with
$\RR^{\binom{n}{d}-1}$.
Later on, we shall extend  (\ref{eq:xtopsi}) to a birational automorphism of the
 projective space  $\PP^{\binom{n}{d}-1}$.
The reference coordinates $x_{[d]} = t_{\emptyset,\emptyset}$
and $\psi_{[d]}= c_{\emptyset, \emptyset} $ will then
serve as homogenizing variables.

The formula $\, \psi  =  {\rm exp}(T(x))   e_{[d]} \,$ simply says that
$\psi$ equals the leftmost column vector of the matrix ${\rm exp}(T(x))$.
For instance,  in Example \ref{ex:zweifuenf2},
the formula for the Pl\"ucker coordinates of the
quantum state $\psi = (\psi_{12},\psi_{13},\ldots,\psi_{45})$
in terms of cluster amplitudes is given by
 the first column  of~(\ref{eq:tenbyten}).
Here is a slightly larger example, where the matrices have size $20 \times 20$:

\begin{example}[$d=3,n=6$] \label{ex:dreisechs}
The $20$ coordinates in the formula (\ref{eq:xtopsi}) are as follows:
$$
\begin{footnotesize}
\begin{aligned}
\psi_{123} &= x_{123} = 1   &\psi_{135} &= -x_{135}    &\psi_{145} &= x_{145} - x_{124} x_{135}+x_{125} x_{134}        &\psi_{256} &= - x_{256} +  x_{125} x_{236} - x_{126} x_{235} \\[-0.5em]
\psi_{124} &= x_{124}       &\psi_{136} &= -x_{136}     &\psi_{146} &= x_{146} - x_{124} x_{136}+x_{126} x_{134}        &\psi_{345} &= x_{345} - x_{134} x_{235} + x_{135} x_{234} \\[-0.5em]
\psi_{125} &= x_{125}       &\psi_{234} &= x_{234}      &\psi_{156} &= x_{156}- x_{125} x_{136} + x_{126} x_{135}       &\psi_{346} &= x_{346} -x_{134} x_{236}+x_{136} x_{234} \\[-0.5em]
\psi_{126} &= x_{126}       &\psi_{235} &= x_{235}      &\psi_{245} &=  -x_{245} +  x_{124} x_{235} - x_{125} x_{234}   &\psi_{356} &= x_{356} -x_{135} x_{236}+x_{136} x_{235}\\[-0.5em]
\psi_{134} &= -x_{134}      &\psi_{236} &= x_{236}      &\psi_{246} &= - x_{246} + x_{124} x_{236} - x_{126} x_{234}    \\[-0.5em]
\end{aligned}
\end{footnotesize}
$$
Finally, at level three we find that $\psi_{456}$ is equal to
$$  \begin{footnotesize} \begin{matrix}
x_{456} \, + \,
x_{124} x_{356} -  x_{125} x_{346} +x_{126} x_{345} -x_{134} x_{256}
+x_{135} x_{246} - x_{136}x_{245} +x_{145} x_{236} -x_{146} x_{235} +x_{156} x_{234} \\ 
\,-\,\,x_{124} x_{135} x_{236}+x_{124} x_{136} x_{235}+x_{125} x_{134} x_{236}
-x_{125} x_{136} x_{234}-x_{126} x_{134} x_{235}  + x_{126} x_{135} x_{234}.
\end{matrix}
\end{footnotesize}
$$
\end{example}

In both examples, we can easily solve
the equation $ \,\psi  =  {\rm exp}(T(x)) \, e_{[d]} \,$  for $x$.
This is done inductively by  level.
For levels zero and one, we simply have $x_I = \pm \psi_I$.
At each larger level, we use the formulas for
lower level coordinates $x_I$ in terms of the $\psi_J$,
and we substitute these into the equation. For instance, in 
Example \ref{ex:dreisechs}, this yields the inversion formulas 
$$  
\begin{footnotesize} 
\begin{aligned}
x_{124} &= \psi_{124} ,\; \ldots \; ,\, \psi_{236} = x_{236}\\
x_{145} &= \psi_{145}-\psi_{124} \psi_{135} + \psi_{125} \psi_{134}, \;\ldots \; ,
\, \psi_{356} = x_{356} -x_{135} x_{236}+x_{136} x_{235}\\
x_{456} &= \psi_{456} -\psi_{124} \psi_{356}+\psi_{125} \psi_{346}-\psi_{126} \psi_{345}+\psi_{134} \psi_{256}
-\psi_{135} \psi_{246}+\psi_{136} \psi_{245}-\psi_{145} \psi_{236}+\psi_{146} \psi_{235}\\ 
&- \psi_{156} \psi_{234}
+ 2 (\psi_{124} \psi_{135} \psi_{236} {-}  \psi_{124} \psi_{136} \psi_{235} {-}  \psi_{125} \psi_{134} \psi_{236}
{+} \psi_{125} \psi_{136} \psi_{234} {+}  \psi_{126} \psi_{134} \psi_{235} {-} \psi_{126} \psi_{135} \psi_{234}).
\end{aligned}
\end{footnotesize}
$$

We next show that such inversion formulas can always be found.

\begin{proposition} \label{prop:invertmap}
The map (\ref{eq:xtopsi}) from 
$x$-coordinates to $\psi$-coordinates  has a polynomial inverse.
Namely,
 $x_I$ is equal to $\pm \psi_I$ plus a polynomial
in $\psi$-coordinates of strictly lower level.
\end{proposition}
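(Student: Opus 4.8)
The plan is to exploit the \emph{level grading} carried by the cluster operator $T(x)$; throughout, write $\ell(I):=|I\setminus[d]|$ for the level. First I would split $T(x)=T_1(x)+T_2(x)+\cdots+T_d(x)$, where $T_m(x)$ collects the entries $T(x)_{J,I}$ whose associated amplitude has level $m$. The recipe defining $T(x)$ shows that $T(x)_{J,I}\ne 0$ forces $I\setminus J\subseteq[d]$ and $(J\setminus I)\cap[d]=\emptyset$; writing $I=(I\cap J)\cup(I\setminus J)$ and $J=(I\cap J)\cup(J\setminus I)$ one then checks $\ell(J)-\ell(I)=|J\setminus I|$, and this number is exactly the level of the amplitude $x_K$ with $K=([d]\setminus(I\setminus J))\cup(J\setminus I)$ sitting in that slot. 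Hence each $T_m(x)$ is linear in the level-$m$ amplitudes and raises the level by exactly $m$; in particular $T(x)$ strictly increases level, which gives an independent proof of its nilpotency.

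Next I would read off $\psi=\exp(T(x))\,e_{[d]}$ one level at a time. Since $e_{[d]}$ lies in level $0$, the $J$-coordinate of $\psi$ is $\sum_{k\ge 0}\tfrac{1}{k!}\,(T(x)^k)_{J,[d]}$, and an expansion $T_{m_1}(x)\cdots T_{m_k}(x)$ contributes to level $\ell(J)$ only when $m_1+\cdots+m_k=\ell(J)$ with every $m_i\ge 1$, which forces $k\le\ell(J)$. The $k=1$ term is the single entry $T(x)_{J,[d]}$, and here the recipe yields $T(x)_{J,[d]}=\pm x_J$, since $K$ collapses to $J$. Every $k\ge 2$ term is a product of at least two factors, each linear in amplitudes of level $\le\ell(J)-1$, so it is a polynomial in amplitudes of level strictly below $\ell(J)$. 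This proves the crucial triangular identity: $\psi_J=\pm x_J+g_J$, where $g_J$ is a polynomial in the cluster amplitudes $x_I$ with $\ell(I)<\ell(J)$ (and $\psi_{[d]}=x_{[d]}=1$ at level $0$).

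Finally I would invert by induction on the level. Levels $0$ and $1$ are immediate, as there $\psi_J=\pm x_J$ with no correction. For the inductive step at level $\ell$, rewrite the triangular identity as $x_J=\pm\psi_J\mp g_J$ and, inside $g_J$, substitute the already-constructed formulas $x_I=\pm\psi_I+(\text{polynomial in }\psi\text{-coordinates of level}<\ell(I))$ for each amplitude $x_I$ that appears. Since every such $I$ has $\ell(I)<\ell$, this expresses $x_J$ as $\pm\psi_J$ plus a polynomial in $\psi$-coordinates of level $<\ell(J)$, which is precisely the claimed form. The one step I expect to require genuine care is the level bookkeeping of the first paragraph — verifying that the entry in position $(J,I)$ shifts the level by the level of its own amplitude, and in particular that the linear ($k=1$) contribution to $\psi_J$ is exactly $\pm x_J$; everything after that is a routine triangular back-substitution.
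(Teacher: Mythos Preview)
Your proof is correct and follows essentially the same approach as the paper: establish the triangular identity $\psi_J=\pm x_J+(\text{polynomial in lower-level }x_I)$ and then invert by induction on the level. The paper's proof simply asserts this triangularity by pointing to the defining formula, whereas you supply the underlying level bookkeeping (that $T_m(x)$ raises the level by exactly $m$, so the $k=1$ term of $\exp(T(x))e_{[d]}$ gives $\pm x_J$ and the $k\ge 2$ terms involve only strictly lower levels); the inductive back-substitution is then identical.
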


\begin{proof}
We proceed by induction on the level. At level zero, we have $x_{[d]} = \psi_{[d]} = 1$.
For level one, we already saw that $x_I = \pm \psi_I$.     If the index $I$ has level $r$ 
then the formula in (\ref{eq:xtopsi}) writes $\psi_I$ as $\pm x_I$ plus a polynomial in variables $x_J$ of level $< r$. 
Each of these lower level $x_J$ can now be replaced with a polynomial in $\psi$, by the induction hypothesis. This yields the promised representation for $x_I$ as $\pm \psi_I$ plus a polynomial in lower level $\psi$-coordinates.
  \end{proof}

Let $x_I(\psi)$ denote the polynomial that expresses the cluster amplitudes 
in terms of the Pl\"ucker coordinates. Conversely,
 $\psi_I(x)$ is a polynomial in cluster amplitudes. 
As is apparent from the proof of Proposition~\ref{prop:invertmap}, the degree of each polynomial is the level of $I$ and the variable $x_I$ occurs linearly in $\psi_I(x)$. Similarly, the variable $\psi_I$ occurs linearly in~$x_I(\psi)$.  

The monomials occurring in $\psi_I(x)$ are
in  natural bijection with those occurring in $x_I(\psi)$, and
we shall give an explicit formula for these monomials and their coefficients.
For this, it helps to note that, for each degree $d$, there is really only one
$\psi$-polynomial and $x$-polynomial.
Here we refer to the symmetric group actions that
permute the indices in $[d]$ and in $[n] \backslash [d]$.
For fixed $d$ and fixed level $| I \backslash [d] |$, all 
$\psi_I(x)$ are in the same orbit, and ditto for~$x_I(\psi)$.  
Each coordinate in the exponential parametrization is a replicate of a certain {\it master polynomial}.

It would be desirable to better understand our equations
from the perspective of representation theory. In this setting,
the master polynomials should be highest weight vectors.
 
 \smallskip
 
The master polynomials of degree $d$ are $\psi_I(x)$ and $x_I(\psi)$ where $n=2d$ and $I = [2d]\backslash [d]$.
All coordinates in (\ref{eq:xtopsi}) and its inverse are obtained from these two by changing indices.
For instance, all quadratic entries in the matrix (\ref{eq:tenbyten}) are replicates of 
$\psi_{34}(x) = x_{14} x_{23} - x_{13} x_{24} + x_{34}$.

By inverting the map (\ref{eq:xtopsi}), we find the  following master polynomials of degree $d \leq 5$:
$$ \begin{footnotesize} \begin{matrix}
x_{34}(\psi) &=& \psi_{34} - \psi_{13} \psi_{24} + \psi_{14} \psi_{23} && \hbox{3 terms} \\
x_{456}(\psi) &=& \psi_{456} - \psi_{124} \psi_{356} +  \cdots - 2 \psi_{126} \psi_{135} \psi_{234}  && \hbox{16 terms} \\
x_{5678}(\psi)  &= & \psi_{5678} - \psi_{1235} \psi_{4678}  +  \cdots -2 \psi_{1278} \psi_{1346} \psi_{2345} +  
\cdots -6 \psi_{1238} \psi_{1247} \psi_{1346} \psi_{2345}
&& \hbox{131 terms} \\
x_{67890}(\psi) & = &  \psi_{67890} - \psi_{12346} \psi_{57890} +  \cdots 
-2  \psi_{12890} \psi_{13457} \psi_{23456} +  \cdots \qquad \qquad \qquad \qquad \quad \, && \\
& & +\,6 \psi_{12390} \psi_{12458} \psi_{13457} \psi_{23456} +  \cdots
+24 \psi_{12349} \psi_{12358} \psi_{12457} \psi_{13456} \psi_{23450}
& & \hbox{1496 terms}
\end{matrix}
\end{footnotesize}
$$
The first Pl\"ucker coordinate $\psi_{[d]}$ plays a special role. It
does not occur in our polynomials.
Let $\bar x_I(\psi)$ denote the homogenization of
$x_I(\psi)$ with respect to $\psi_{[d]}$. This
is a homogeneous polynomial of
degree $| I \backslash [d] |$. 
Its hypersurface $V(\bar x_I) \subset \PP^{\binom{n}{d}-1}$ will be important in Section~\ref{sec3}.

We close this section by giving explicit combinatorial formulas for the master polynomials. 
Formulas for all other coordinates in (\ref{eq:xtopsi}) and their inverse are obtained by adjusting the indices.
The number of monomials is found in {\em The On-Line Encyclopedia of Integer Sequences}, which is
published electronically at \url{http://oeis.org}. Namely, it is the sequence
$$ \qquad \# \,\mathcal{U}_d \,=\, 3, 16, 131, 1496, 22482, 426833, 9934563
\quad {\rm for} \,\,\,
d = 2,3,4,5,6,7,8. \qquad 
 (A023998)     
$$
This is  the number of uniform block permutations. Let
$[\overline{d}] = [2d]\backslash [d]$. We recall (e.g.~from \cite{OSSZ}) that
a {\em uniform block permutation} is a partition $\pi$ of the set $[d] \cup [\overline{d}] = [2d]$, 
here denoted
\begin{equation}
\label{eq:piUBP} \pi\,=\,
\{\pi_1,\pi_2,\ldots,\pi_k \} \,=\,
\{\alpha_1 \cup \beta_1, \alpha_2 \cup \beta_2, \ldots, \alpha_k \cup \beta_k\},
\end{equation}
which satisfies $\alpha_i\subseteq[d]$, $\beta_i \subseteq [\overline{d}]$ and $|\alpha_i| = |\beta_i|$ for all $i\in [k]$.
We denote by $\mathcal{U}_d$ the set of all uniform block permutations of $[2d]$.
For details on the algebraic and combinatorial structures of $\mathcal{U}_d$ 
we refer to \cite[Section 2.3]{OSSZ}
and the references therein.

The connection to coupled cluster theory arises from identifying $\mathcal{U}_d$ with the set of monomials 
that appear in the above polynomials $\psi_I(x)$ and $x_I(\psi)$.
To see this, we pass to the coordinates in (\ref{eq:cpsi})  and (\ref{eq:tx}).
The polynomial $\psi_I(x)$ is now written as $c_{\alpha,\beta}(t)$, and $x_I(\psi)$ is now written as  $t_{\alpha,\beta}(c)$.
With this notation, the master polynomials of degree $d$ are $c_{[d],[\overline d]}(t)$ and $t_{[d],[\overline d]}(c)$, and we write the monomial corresponding to a 
given uniform block permutation as
\begin{equation}
\label{eq:tcmono}
t_\pi \,:= \, t_{\alpha_1,\beta_1}  t_{\alpha_2,\beta_2}  \cdots  \, t_{\alpha_k,\beta_k}
\quad {\rm and} \quad   
c_\pi \,:=\, c_{\alpha_1,\beta_1}  c_{\alpha_2,\beta_2}  \cdots \, c_{\alpha_k,\beta_k} .   
\end{equation} 
The master polynomials are $\mathbb{Z}$-linear combinations of these monomials for $k=1,2,\ldots,d$.
 
\begin{theorem} \label{thm:UBP}
The coordinates in the exponential parametrization are 
$$ 
c_{[d],[\overline{d}]}(t) \,=\, 
\sum_{\pi \in\mathcal{U}_d}{\rm sign}(\pi) \,t_\pi 
\quad {\rm and} \quad
t_{[d],[\overline{d}]}(c) \,=  \,
\sum_{\pi \in \mathcal{U}_d}  {\rm sign}(\pi) (-1)^{\nu+ k - 1} (k-1)! \,c_\pi.
$$
In these formulas, the sign of an element $\pi \in \mathcal{U}_d$ is the product of the signs of the  permutations
\begin{equation}
\label{eq:signperm}
[d] \mapsto (\alpha_1, \alpha_2, \dots, \alpha_k)
 \quad {\rm and} \quad [\overline{d}] \mapsto (\beta_1, \beta_2, \dots, \beta_k),
\end{equation}
where each $\alpha_i$ and $\beta_i$ is an increasing sequence.
We also have
 $\nu = \frac{d(d - 1)}{2} - \sum_{r = 1}^k\! \frac{|\alpha_r|(|\alpha_r| - 1)}{2}$. 
\end{theorem}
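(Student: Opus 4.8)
The plan is to compute the matrix exponential $\exp(T(x))$ combinatorially and read off the first column, then invert the resulting generating identity using a sign-reversing involution or Möbius inversion on the lattice of set partitions. For the forward direction, observe that powers $T(x)^k$ correspond to chains of ``de-excitation'' steps: the $(J,I)$-entry of $T(x)$ is a signed $t_{I\backslash J,\,J\backslash I}$, and multiplying $k$ copies records a composition of $I=[d]$ into disjoint blocks. I would set up the bijection between length-$k$ walks from $[\overline d]$ down to $[d]$ in the Hasse-type digraph of $T(x)$ and \emph{ordered} set partitions of $[2d]$ into $k$ uniform blocks $(\alpha_1\cup\beta_1,\dots,\alpha_k\cup\beta_k)$. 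Summing over the $k!$ orderings of a fixed unordered uniform block permutation $\pi$ with $k$ blocks contributes $\frac{1}{k!}T(x)^k$ a total weight $\frac{1}{k!}\cdot k!\cdot\mathrm{sign}(\pi)\,t_\pi = \mathrm{sign}(\pi)\,t_\pi$ to the $([\overline d],[d])$-entry of $\exp(T(x))$, provided the signs from the individual matrix entries multiply to exactly $\mathrm{sign}(\pi)$ as defined in \eqref{eq:signperm}. Checking that sign aggregation is the first genuine calculation: each matrix entry carries a pair of permutation signs (for splitting $I$ and $J$), and along a chain these telescope, so that only the signs of the permutations $[d]\mapsto(\alpha_1,\dots,\alpha_k)$ and $[\overline d]\mapsto(\beta_1,\dots,\beta_k)$ survive; the intermediate block orderings must cancel, which is why the ordered sum collapses cleanly.

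For the inverse direction I would use the fact, already established in Proposition~\ref{prop:invertmap}, that $t_{[d],[\overline d]}(c)$ exists as a polynomial, and then determine its coefficients by Lagrange-type inversion adapted to this graded setting. The cleanest route is to work at the level of the matrices: $\exp(T(x))^{-1}=\exp(-T(x))$, and the inverse parametrization sends $\psi$ to $\exp(-T(x(\psi)))\,e_{[d]}$ — but more useful is that on the affine chart the backward map is $x_I(\psi)=$ the $([\overline d],[d])$-entry (suitably indexed) of $\log$ of the unitriangular matrix whose first column is $\psi$. Since $T(x)$ is recovered as $\log(\exp T(x))$, and $\log(1+N)=\sum_{k\ge1}\frac{(-1)^{k-1}}{k}N^k$, the coefficient bookkeeping becomes: a uniform block permutation $\pi$ with $k$ blocks, arising from composing $k$ elementary pieces along a chain, picks up from the $\frac{(-1)^{k-1}}{k}$ in the logarithm and the $k!$ orderings of its blocks a total factor $(-1)^{k-1}(k-1)!$, times $\mathrm{sign}(\pi)$, times an extra sign $(-1)^\nu$. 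That last sign $(-1)^\nu$ is the subtle point: it comes from the discrepancy between the sign of the permutation used to order elements \emph{within} a block and how the reference set $[d]$ was indexed; concretely $\nu=\binom{d}{2}-\sum_r\binom{|\alpha_r|}{2}$ counts the inversions removed when the blocks are concatenated versus merged, and I would verify it by tracking the sign of $e_{[d]}$ against $e_I$ through each elementary matrix multiplication.

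The main obstacle, and where I would spend the most care, is the \textbf{sign analysis} — showing that the product of the per-entry permutation signs along any chain realizing $\pi$ equals $\mathrm{sign}(\pi)$ in the forward case, and equals $\mathrm{sign}(\pi)(-1)^\nu$ in the backward case. Everything else (the $\frac{1}{k!}$ versus $k!$ orderings giving coefficient $1$; the $\frac{(-1)^{k-1}}{k}$ versus $k!$ orderings giving $(-1)^{k-1}(k-1)!$) is bookkeeping that follows from the standard exponential/logarithm series once the chain-to-ordered-partition bijection is in place. I would prove the sign identity by induction on $k$: split off the last block $\alpha_k\cup\beta_k$, compare the sign of $[d]\mapsto(\alpha_1,\dots,\alpha_k)$ with that of $[d]\backslash\alpha_k\mapsto(\alpha_1,\dots,\alpha_{k-1})$ inside the minor, and show the correction is exactly what one matrix entry of $T(x)$ contributes. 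A useful sanity check at each stage is the running example $d=2$, $n=4$: the forward master polynomial is the Plücker quadric $\psi_{12}\psi_{34}-\psi_{13}\psi_{24}+\psi_{14}\psi_{23}$ (three uniform block permutations, signs $+,-,+$, all with $k\le 2$), and the backward polynomial $t_{34}(c)=c_{34}-c_{3,4}c_{3,4}$-type terms must come out with the predicted $(-1)^{\nu+k-1}(k-1)!$ weights, i.e. $\nu=1-0=1$ for the one-block term and $\nu=1-0=1$ for the split, matching the observed coefficients. Once $d=2$ and $d=3$ check out against the explicit formulas already displayed in the excerpt, the inductive sign argument will be on firm footing.
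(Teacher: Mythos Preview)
Your forward direction matches the paper's: both identify the monomial $t_\pi$ as a product of entries along a chain in $T(x)^k$, and both reduce the sign verification to showing that the telescoping product of per-entry permutation signs equals $\mathrm{sign}(\pi)$. The paper dispatches this by observing that the parity contribution $\sum_r |\alpha_r|(d-|\alpha_r|)$ is always even; your proposed induction on $k$ would reach the same conclusion.

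For the inverse formula your route diverges from the paper's. The paper first upgrades the forward formula to the identity
\[
(-1)^{\nu(\rho)}\,\mathrm{sign}(\rho)\,c_\rho(t)\;=\;\sum_{\pi\le\rho}\mathrm{sign}(\pi)\,t_\pi
\]
valid for \emph{every} $\rho\in\mathcal U_d$, and then applies M\"obius inversion on the poset $\mathcal U_d$, using $\mu(\pi,\hat 1)=(-1)^{k-1}(k-1)!$. Your matrix-logarithm approach can also be made to work, but there is a hidden step you gloss over: expanding $\log(\exp T(x))$ produces products of entries $N_{J,I}$ of $N=\exp(T(x))-I$, and to obtain a polynomial in the $c$-variables you must first know that \emph{every} such entry, not only those in the first column, equals a signed $c_{I\backslash J,\,J\backslash I}$. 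That identification is precisely the general forward identity displayed above (after relabeling), so your method implicitly needs the same ingredient the paper states explicitly. Once that is in hand, the factor $(-1)^{k-1}(k-1)!$ arises either as the M\"obius function of the partition lattice (paper) or from $\tfrac{(-1)^{k-1}}{k}\cdot k!$ in the logarithm series (you); these are two faces of the standard exponential formula, so neither route is deeper than the other. One small slip in your sanity check: for $d=2$ the one-block term has $\nu=\binom{2}{2}-\binom{2}{2}=0$, not $1$.
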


\begin{proof}[Proof]
Let $\pi \in \mathcal{U}_d$.
We consider the monomial $t_\pi$ in the master polynomial $c_{[d],[\overline{d}]}(t)$. 
Its term is the product of matrix entries of $T(x)$ 
whose respective rows and columns are
$$
([d] \backslash A_{r} ) \cup B_{r} \quad {\rm and} \quad ([d] \backslash A_{r - 1} ) \cup B_{r - 1},
$$
where $A_r = \cup_{i=1}^r \alpha_i$ and $B_r = \cup_{i=1}^r \beta_i$.
One checks from the definition of $T(x)$ that the sign of such an entry comes from the number of inversions of $(B_{r - 1}, \beta_{r})$ and $(\alpha_{r}, [d] \backslash A_{r})$ and from the sign $(-1)^{|\alpha_r|(d - |\alpha_r|)}$.
We take the product of the signs of these entries for all $r \le k$. 
Since
$$
\sum_{r = 1}^k |\alpha_r|(d - |\alpha_r|) \,=\, \sum_{r = 1}^k |\alpha_r|(d - 1) - \sum_{r = 1}^k |\alpha_r|(|\alpha_r| - 1)
\, =\, d(d - 1) - \sum_{r = 1}^k |\alpha_r|(|\alpha_r| - 1)
$$
is an even integer, 
the sign of $\pi$ equals the product of the signs of the permutations in 
(\ref{eq:signperm}).

The set $\mathcal{U}_d$ of uniform block permutations has a natural partial order, induced by the partial orders on
the set partitions of $[d]$ and $[\overline{d}]$.
The Möbius function of the poset $\mathcal{U}_d$ is given by $\mu(\pi) = (-1)^{k - 1}(k - 1)!$.
For any uniform block partition $\rho \in \mathcal{U}_d$ we can write
$$
(-1)^\nu   {\rm sign}(\rho) \,  c_\rho(t) \,\,=\,\, \sum_{\pi \le \rho} {\rm sign} (\pi)  \, t_\pi .
$$
Using Möbius inversion, we obtain the asserted formula for  $t_{[d], [\overline{d}]}$ in terms of
the $c_\pi$.
\end{proof}

\section{Truncation Varieties}
\label{sec3}

In this section, we study the algebraic varieties that are promised in the title of this article.
They are found by truncating the exponential parametrization (\ref{eq:xtopsi}) to a certain coordinate subspace.
We consider the image of this truncation in $\mathcal{H}$.
Its closure in $\PP^{\binom{n}{d}-1}$ is our variety.

More precisely, let $\sigma$  be a non-empty proper subset of $[d] = \{1,2,\ldots,d\}$, and define
\begin{equation}
\label{eq:Vsigma} \begin{matrix}
\mathcal{V}_\sigma
\,\,=\,\,
{\rm span} \,\bigl\{  \,e_J\,:\,
J \in \binom{[n]}{d} \,\,{\rm and} \,\,
|J \backslash [d]| \in \sigma \cup \{0\} \bigr\}. 
\end{matrix} \end{equation}
This is a linear subspace of  the vector space $\mathcal{V}$ spanned by all basis vectors $e_J$ of level in $\sigma$.
 The subspace $\mathcal{V}_\sigma$ is the variety of the ideal
 \begin{equation} 
 \label{eq:Pideal} \begin{matrix}
 P_\sigma \,\, = \,\,
 \bigl\langle
 \, x_I \,\,: \,\, I \in \binom{[n]}{d} \,\,{\rm and} \,\,
| I \backslash [d]| \in [d] \backslash \sigma  \,\bigr\rangle .
\end{matrix}
\end{equation}
  
The restriction of  the exponential parametrization to  the subspace 
$\mathcal{V}_\sigma$ is injective. It maps $\mathcal{V}_\sigma$ into 
the full space of quantum states $\mathcal{H}$, and it maps further to
the projective space $\,\PP(\mathcal{H}) = \PP^{\binom{n}{d}-1}$.
We define the {\em truncation variety} $V_\sigma$ as the closure of the
image of $\mathcal{V}_\sigma$ under this map to $\PP^{\binom{n}{d}-1}$. Since the exponential parametrization is invertible, 
the dimension of the projective variety $V_\sigma$ is one less than 
the dimension of its linear space of parameters
$\mathcal{V}_\sigma$.

The varieties $V_\sigma$ correspond to the
various models in  CC theory. For instance,
in the notation of \cite[Section 1.2]{FO}, the index set $\sigma = \{1,2\}$
corresponds to CCSD, the index set $\sigma = \{1,2,3\}$  corresponds to CCSDT, etc.
But here we allow arbitrary truncation sets. For instance, taking $ \sigma = \{2,3\}$
means that doubles and triples are included but singles are not.

\begin{example}[$d{=}2, n{=}5$]
There are only two proper subsets of $[d]$, namely
$\sigma = \{1\}$ and $\sigma = \{2\}$.
The varieties $V_\sigma$ live in $\PP^9$. They
are defined by truncating the exponential parametrization, which is
given by the leftmost column in (\ref{eq:tenbyten}).
For $\sigma = \{2\}$ we set $x_{13} = x_{14} = x_{15} 
= x_{23} = x_{24} = x_{25} = 0$. Hence
$V_{2}$ is the subspace $\PP^3$
with coordinates $(\psi_{12}: \psi_{34}:\psi_{35}:\psi_{45})$.
For $\sigma = \{1\}$ we set $x_{34} = x_{35} = x_{45} = 0$,
and we obtain the Grassmannian ${\rm Gr}(2,5) $.
See Example \ref{ex:zweifuenf2} and
Theorem \ref{thm:grassmann}. We revisit the ideal of ${\rm Gr}(2,5)$ in
Example~\ref{ex:zweifuenf3}.
\end{example}

Our next result characterizes the homogeneous prime ideals of the truncation varieties.
It shows how to derive these ideals from the master polynomials that are given in Theorem~\ref{thm:UBP}.

\begin{theorem} \label{thm:primeideal}
The homogeneous prime ideal of the truncation variety $V_\sigma$ is  the saturation
\begin{equation}
\label{eq:saturationideal}
\mathcal{I}(V_\sigma) \,\,\,= \,\,\,
 \bigl\langle\, \bar{x}_I(\psi) \, : \, | I \backslash [d] | \in [d] \backslash \sigma  \bigr\rangle \,: \,
\langle\, \psi_{[d]} \,\rangle^\infty .
\end{equation}
In particular,  the restriction of $\,V_\sigma$ to the
affine chart $\,\CC^{\binom{n}{d}-1} = \{ \psi_{[d]} =1 \}$ of 
projective space $\,\PP^{\binom{n}{d}-1}$
is the complete intersection defined by the equations
$x_I(\psi) = 0$ where $|I \setminus [d]| \in [d] \backslash \sigma$.
\end{theorem}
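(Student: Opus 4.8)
The plan is to prove the two assertions in tandem, starting with the affine statement since the projective/saturation claim follows from it by standard homogenization. Work in the affine chart $\{\psi_{[d]} = 1\}$, which is $\CC^{\binom{n}{d}-1}$ with coordinates $(\psi_I)_{I \neq [d]}$, and correspondingly in the affine chart $\{x_{[d]} = 1\}$ of the parameter space. By Proposition~\ref{prop:invertmap}, the exponential parametrization restricts to a polynomial isomorphism $\mathcal{H}' \to \mathcal{V}'$ (in both directions polynomial), with inverse given coordinatewise by $x_I(\psi)$. Under this isomorphism, the affine linear subspace $\mathcal{V}_\sigma \cap \{x_{[d]}=1\}$ is cut out inside $\mathcal{V}'$ by the vanishing of the coordinates $x_I$ with $|I \setminus [d]| \in [d]\setminus\sigma$ (this is exactly the ideal $P_\sigma$ from \eqref{eq:Pideal}, restricted to the chart). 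Pulling this back along the isomorphism, the image $V_\sigma \cap \{\psi_{[d]}=1\}$ is precisely the common zero locus in $\CC^{\binom{n}{d}-1}$ of the polynomials $x_I(\psi)$ for $|I\setminus[d]| \in [d]\setminus\sigma$. That already identifies the set; the content is that this is a \emph{complete intersection} and that the ideal generated by these polynomials is radical (indeed prime after saturation).

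For the complete-intersection claim, count: the number of generators $x_I(\psi)$ is $\#\{I : |I\setminus[d]| \in [d]\setminus\sigma\}$, and since the map $\psi \mapsto x(\psi)$ is an isomorphism of affine spaces, the subvariety it cuts out has codimension exactly equal to the number of equations, namely $\dim \mathcal{V}' - \dim \mathcal{V}_\sigma'$. So the equations form a regular sequence. Moreover — and this is the key structural input — by the remark following Proposition~\ref{prop:invertmap}, each $x_I(\psi)$ equals $\pm\psi_I$ plus a polynomial in strictly lower-level $\psi$-coordinates; in particular $x_I(\psi)$ depends linearly on the single variable $\psi_I$, with the other generators involving disjoint "leading" variables $\psi_{I'}$. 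Hence the change of coordinates $\psi_I \mapsto x_I(\psi)$ (for the relevant $I$), identity on the rest, is a polynomial automorphism of $\CC^{\binom{n}{d}-1}$ carrying our ideal to the coordinate ideal $\langle x_I : |I\setminus[d]|\in[d]\setminus\sigma\rangle$. That coordinate ideal is obviously prime; therefore the ideal $\langle x_I(\psi) : |I\setminus[d]| \in [d]\setminus\sigma\rangle$ is prime in the affine coordinate ring, and its zero set is irreducible of the stated dimension. This proves the second (affine) sentence of the theorem in full.

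For the first assertion, let $J = \langle \bar x_I(\psi) : |I\setminus[d]|\in[d]\setminus\sigma\rangle \subseteq \CC[\psi]$ be the homogeneous ideal generated by the homogenizations with respect to $\psi_{[d]}$. Dehomogenizing at $\psi_{[d]}=1$ recovers the affine ideal just analyzed, whose zero set is the affine part of $V_\sigma$; since $V_\sigma$ is by definition the projective closure of that affine variety (the image of $\mathcal{V}_\sigma$ contains points with $\psi_{[d]}\neq 0$, as it contains the image of $0$, namely $e_{[d]}$), we get $V_\sigma = V(J) \setminus V(\psi_{[d]})$-closure $= V(J : \langle\psi_{[d]}\rangle^\infty)$. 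The ideal $\mathcal{I}(V_\sigma)$ of the projective closure of an affine variety is the saturation of any ideal defining it affinely with respect to the irrelevant-at-infinity variable — here $\psi_{[d]}$ — so $\mathcal{I}(V_\sigma) = J : \langle\psi_{[d]}\rangle^\infty$, which is \eqref{eq:saturationideal}. Primeness of $\mathcal{I}(V_\sigma)$ follows because it is the vanishing ideal of an irreducible variety (the closure of an irreducible affine variety), or alternatively because saturation of the prime affine ideal transports to a prime homogeneous ideal.

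I expect the only real subtlety — the "main obstacle" — to be bookkeeping the combinatorics that justifies the triangular-automorphism claim: one must verify carefully that the generators $x_I(\psi)$, $|I\setminus[d]|\in[d]\setminus\sigma$, have algebraically independent and "triangulable" leading terms $\pm\psi_I$ so that the substitution $\psi_I \leftrightarrow x_I(\psi)$ is genuinely invertible as a polynomial automorphism of the whole affine space, not merely on the subvariety. This is precisely where Proposition~\ref{prop:invertmap} and its level-grading are used; once that triangular structure is in hand, everything else (complete intersection, radicality, primeness, the saturation formula) is formal. A minor point to state cleanly is that $V_\sigma$ genuinely is the projective closure of its affine part — i.e. that no component of $V_\sigma$ lies entirely in the hyperplane $\{\psi_{[d]}=0\}$ — which holds because $V_\sigma$ is irreducible (being the closure of the image of an irreducible set under a morphism) and meets the chart $\{\psi_{[d]}=1\}$.
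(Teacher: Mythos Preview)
Your proposal is correct and follows essentially the same approach as the paper: use Proposition~\ref{prop:invertmap} to view the exponential parametrization as a polynomial isomorphism of affine spaces, transport the coordinate ideal $P_\sigma$ through it to obtain the prime ideal $\langle x_I(\psi) : |I\setminus[d]|\in[d]\setminus\sigma\rangle$ in $\CC[\psi]$, and then pass to the projective closure via saturation with respect to $\psi_{[d]}$. The paper's proof is more terse---it phrases the key step as computing $\iota^{-1}(P_\sigma)$ for the ring isomorphism $\iota:\CC[\psi]\to\CC[x]$ and observes primeness immediately since $P_\sigma$ is prime---whereas you add a somewhat redundant second argument via a partial triangular change of coordinates; but the underlying idea is identical.
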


The provided explicit description of the ideal of the truncation variety allows the computation of ${\rm deg}(V_\sigma)$ -- and hence the bound in  Theorem \ref{thm:generalbound} --
via the degree of the ideal.
For a textbook introduction to the saturation in (\ref{eq:saturationideal}), see Definition 8 in \cite[Section 4.4]{CLO}.
Its role in the context of projective geometry is explained in \cite[Section 8.5]{CLO}.
These references and the following example are meant to help our readers in understanding Theorem~\ref{thm:primeideal}.

\begin{example}[$d{=}2, n{=}5$, $\sigma {=} \{1\}$]
 \label{ex:zweifuenf3}
 The truncation variety $V_{\{1\}} = {\rm Gr}(2,5)$ has
codimension $3$  in $\PP^9$. Its restriction to the
affine chart $\CC^9 = \PP^9 \backslash V(\psi_{12})$ is the zero set of three polynomials:
$$ \begin{matrix}
x_{34}(\psi) & = & \, \psi_{34} - \psi_{13} \psi_{24} + \psi_{14} \psi_{23}, \\
x_{35}(\psi) & = & \, \psi_{35} - \psi_{13} \psi_{25} + \psi_{15} \psi_{23}, \\
x_{45}(\psi) & = & \, \psi_{45} - \psi_{14} \psi_{25} + \psi_{15} \psi_{24}.
\end{matrix}
$$
By multiplying each first term with $\psi_{12}$, we obtain the quadratic forms
 $\bar{x}_{34}(\psi), \bar{x}_{35}(\psi),\bar{x}_{45}(\psi)$.
These do not cut out $V_{\{1\}}$. Indeed,
the ideal on the left below is radical but it is not prime:
$$
    \! \langle \bar{x}_{34}(\psi), \bar{x}_{35}(\psi),\bar{x}_{45}(\psi) \rangle  = \,
\mathcal{I}({\rm Gr}(2,5)) \,\cap \,
\langle \psi_{12} ,  \psi_{13} \psi_{24} - \psi_{14} \psi_{23},
                          \psi_{13} \psi_{25} - \psi_{15} \psi_{23},
                          \psi_{14} \psi_{25} - \psi_{15} \psi_{24} \rangle.
$$
    This is a complete intersection, 
of codimension $3$ and degree $2^3 = 8 = 5+3$.
The saturation with respect to $\psi_{12}$ removes
the second associated prime and yields the desired prime ideal.
\end{example}

\begin{proof}[Proof of Theorem \ref{thm:primeideal}]
We write $\CC[\psi]$ and $\CC[x]$
for the rings of polynomial functions
on $\mathcal{H}'$ and $\mathcal{V}'$ respectively.
These are polynomial  rings in $\binom{n}{d}-1$ variables, where $\psi_{[d]} = x_{[d]} = 1$.
The exponential parametrization defines an isomorphism $\iota: \CC[\psi] \rightarrow \CC[x]$
between these polynomial rings.
Note that $\iota^{-1}(x_I) = x_I(\psi)$ is the polynomial 
constructed in Proposition~\ref{prop:invertmap}.

The linear subspace  $\mathcal{V}_\sigma$ in (\ref{eq:Vsigma}) corresponds to the
ideal $P_\sigma$ in (\ref{eq:Pideal}). We write $\gamma_\sigma : \CC[x] \rightarrow \CC[x]/P_\sigma$ 
for the associated quotient map.
By definition, the prime ideal of $V_\sigma \cap \CC^{\binom{n}{d}-1}$ is the kernel of $\,\gamma_\sigma \circ \iota$. 
This equals $\iota^{-1}(P_\sigma)$, and it is a prime ideal because $P_\sigma$ is prime. Hence,
\begin{equation}
  \label{eq:iotainverse}
 \iota^{-1}(P_\sigma) 
\,\,=\,\,  \bigl\langle x_I(\psi)
\, : \, | I \backslash [d] | \in [d] \backslash \sigma  \bigr\rangle. 
\end{equation}
This is the inhomogeneous prime ideal defining the irreducible affine variety
$V_\sigma \cap \CC^{\binom{d}{n}-1}$.
We pass to the prime ideal of the projective closure $V_\sigma \subset \PP^{\binom{n}{d}-1}$
by the saturation in (\ref{eq:saturationideal}).
\end{proof}

\begin{example}[$d{=}3,n{=}6$] \label{ex:dreisechs2}
There are six distinct truncation varieties $V_\sigma$ in $\PP^{19}$.
We compute their prime ideals by the formula in (\ref{eq:saturationideal}).
  Each item is indexed by
 the corresponding set~$\sigma$:
\begin{itemize}
\item[$\{2\}$] The linear space $V_{\{2\}} \simeq \PP^9$ is the zero set of the
 ten coordinates $\psi_I$ of level $1$ or~$3$. \vspace{-0.2cm}
\item[$\{3\}$] Here we obtain the line $V_{\{3\}}  \simeq \PP^1$  that is spanned by 
the two points $e_{123}$ and $e_{456}$. \vspace{-0.2cm}
\item[$\{2,3\}$] The linear space $V_{\{2,3\}} \simeq \PP^{10}$
is the zero set of the nine coordinates $\psi_I$ of level~$1$. \vspace{-0.2cm}
\item[$\{1,2\}$] This is the cubic hypersurface $V_{\{1,2\}}$ which is given by the
master polynomial $\bar x_{456}(\psi)$. \vspace{-0.2cm}
\item[$\{1,3\}$]  The ideal $\mathcal{I}(V_{\{1,3\}})$
is generated by $25$ quadrics, and
${\rm dim}(V_{\{1,3\}}) \! =\! 10$,
${\rm deg}(V_{\{1,3\}}) \! = \! 41$.
 \vspace{-0.2cm}
\item[$\{1\}$] This is the Grassmannian $V_{\{1\}} = {\rm Gr}(3,6)$, of
dimension $9$ and degree $42$.  Its ideal is generated by $35$ quadrics.
In (\ref{eq:saturationideal}) we start with 
nine  quadrics and the cubic $\bar x_{456}(\psi)$. 
\end{itemize}
These computations were carried out with the computer algebra system
{\tt Macaulay2} \cite{M2}.
\end{example}

We have already seen Grassmannians a few times for $\sigma = \{1\}$.
This is a general result:

\begin{theorem} \label{thm:grassmann}
The truncation variety $V_{\{1\}}$ equals the
Grassmannian ${\rm Gr}(d,n)$ in its Pl\"ucker embedding in
$\,\PP^{\binom{n}{d}-1}$. The truncation varieties $V_\sigma$  are thus
generalizations of Grassmannians.
\end{theorem}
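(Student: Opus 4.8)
The plan is to use the explicit ideal description from Theorem~\ref{thm:primeideal} and match it against the well-known Pl\"ucker ideal of ${\rm Gr}(d,n)$, but the cleanest route is actually via the parametrizations rather than the ideals. First I would recall the affine chart picture. On the chart $\{\psi_{[d]} = 1\}$, the Grassmannian ${\rm Gr}(d,n)$ is the image of the parametrization sending a $d \times (n-d)$ matrix $M = (m_{ij})$ to the vector of all $d \times d$ minors of the $d \times n$ matrix $[\,\mathrm{Id}_d \mid M\,]$; equivalently, in the $c_{\alpha,\beta}$ coordinates of~(\ref{eq:cpsi}), the coordinate $c_{\alpha,\beta}$ of the point is the $|\alpha| \times |\beta|$ minor of $M$ with row set $\alpha$ and column set $\beta$. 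On the other side, $V_{\{1\}}$ is by definition the closure of the image of $\mathcal{V}_{\{1\}}$, the span of basis vectors of level $0$ or $1$, under the exponential parametrization~(\ref{eq:xtopsi}); so its points are $\exp(T(x))e_{[d]}$ where only the level-one amplitudes $x_I = t_{i,j}$ (with $\{i\}\subseteq[d]$, $\{j\}\subseteq[n]\setminus[d]$) are nonzero, together with $x_{[d]}=1$.

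The heart of the argument is therefore the identity already foreshadowed in Example~\ref{ex:zweifuenf2}: when all level-$\geq 2$ amplitudes vanish, the first column of $\exp(T(x))$ is precisely the vector of maximal minors of $[\,\mathrm{Id}_d \mid M\,]$, where $M$ is the $d \times (n-d)$ matrix with entries $M_{i,j} = t_{i,j-d}$ up to sign. I would prove this by a direct computation with the matrix $T(x)$. When $x$ is supported in level one, $T(x)$ restricted to the relevant rows/columns is exactly the ``companion-type'' action that, upon exponentiating, produces minors: concretely, the $(J,[d])$-entry of $\exp(T(x))$ for $|J\setminus[d]| = k$ is, up to the sign bookkeeping in the definition of $T(x)$, the determinant of the $k \times k$ submatrix of $M$ with rows $[d]\setminus J$ (reindexed into $[d]$) and columns $J\setminus[d]$. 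This follows either by induction on $k$ using $\exp(T) = \mathrm{Id} + T\exp(T)$ together with a Laplace (cofactor) expansion of the minor along one row, or by invoking the combinatorial formula of Theorem~\ref{thm:UBP}: the uniform block permutations $\pi$ of $[2k]$ with all blocks of size one are in bijection with permutations of $[k]$, their signs match the determinant signs, and for level-one amplitudes the only surviving monomials $t_\pi$ are exactly those with all blocks singletons, so $c_{\alpha,\beta}(t)$ collapses from the full uniform-block-permutation sum to an honest $|\alpha|\times|\alpha|$ determinant. Matching the sign conventions of Theorem~\ref{thm:UBP} with the standard Pl\"ucker sign on $[\,\mathrm{Id}_d \mid M\,]$ is the one genuinely fiddly point.

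Having established that the restriction of the exponential parametrization to $\mathcal{V}_{\{1\}}$ agrees (up to the coordinate signs $\pm\psi_I$, which are harmless) with the standard affine Pl\"ucker parametrization of ${\rm Gr}(d,n)$, I would conclude: the two maps have the same image in the chart $\{\psi_{[d]}=1\}$, hence the same Zariski closure in $\PP^{\binom{n}{d}-1}$, so $V_{\{1\}} = {\rm Gr}(d,n)$. Alternatively, and perhaps preferably as a double-check, one can argue on the level of ideals: by Theorem~\ref{thm:primeideal}, $\mathcal{I}(V_{\{1\}})$ is the saturation with respect to $\psi_{[d]}$ of the ideal generated by the $\bar x_I(\psi)$ for $|I\setminus[d]|\geq 2$; the case $|I\setminus[d]|=2$ gives exactly the $3$-term quadrics $\bar x_{ijkl}(\psi)=\psi_{ijkl}-\psi_{ij\cdot\cdot}\psi_{\cdot\cdot kl}+\cdots$ which, after reindexing via~(\ref{eq:cpsi}), are the classical ``short'' Pl\"ucker relations expressing each level-two $c_{\alpha,\beta}$ as a $2\times 2$ minor; and since these short relations already generate the full Pl\"ucker ideal after saturation (a standard fact about the Grassmannian in this affine chart, where the higher minors are determined by the $2\times2$ ones), the saturated ideal is $\mathcal{I}({\rm Gr}(d,n))$. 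I expect the main obstacle to be none of the conceptual steps but rather the careful verification that the sign character $\mathrm{sign}(\pi)$ from Theorem~\ref{thm:UBP}, specialized to singleton-block uniform block permutations, reproduces the determinant sign $\mathrm{sgn}$ on the appropriate submatrix of $[\,\mathrm{Id}_d \mid M\,]$ — this is where a small error would silently break the identification, so it deserves an explicit check for, say, one level-two and one level-three coordinate before asserting it in general.
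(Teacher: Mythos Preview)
Your proposal is correct but follows a genuinely different route from the paper. You work combinatorially: restrict the explicit UBP formula of Theorem~\ref{thm:UBP} to level-one amplitudes, observe that only singleton-block uniform block permutations survive, and identify the resulting sum with a determinant (equivalently, do the induction via cofactor expansion). The paper instead argues structurally: it realizes the level-one cluster operator as a derivation of the exterior algebra, uses the resulting identity $T_{k+1} = T_k \wedge {\rm Id}_n + {\rm Id} \wedge T_1$ to conclude $\exp(T(t)) = \wedge_d \exp(T_1(t))$, and then reads off the Pl\"ucker minors directly from $\exp(T_1(t)) = {\rm Id}_n + T_1(t)$. The paper's approach sidesteps all sign bookkeeping---precisely the ``fiddly point'' you flag---by packaging it into the exterior-power-of-exponential identity, and it explains \emph{why} $\sigma=\{1\}$ is special (single excitations act as derivations). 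Your approach, in turn, is more elementary and stays entirely within the combinatorics already developed in Section~\ref{sec2}, requiring neither the derivation property nor the Kronecker-sum exponential fact from \cite{NH}. Your alternative ideal-theoretic check via Theorem~\ref{thm:primeideal} is fine as a sanity check, but the claim that the level-two $\bar x_I(\psi)$ alone generate the Pl\"ucker ideal after saturation deserves a reference or a short argument if you go that way.
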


\begin{proof}
We start with an alternative characterization of the matrix $T(x)$.
Recall from \cite{FL, FO} that the    \textit{excitation operators} are the following
    graded endomorphisms of the exterior algebra:
    $$
    \chi_{\alpha,\beta}: \bigwedge \RR^n \to \bigwedge \RR^n, \quad z \mapsto (e_\alpha~\lrcorner~z) \wedge e_\beta.
    $$
    Here  $\alpha \subseteq [d]$, $\beta \subseteq [n] \backslash [d]$ and $|\alpha| = |\beta|$. The interior product $\lrcorner$ is  the operator dual to the exterior product
     \cite[Section 3.6]{GQ}.
  Informally, $e_\alpha~\lrcorner~z$ removes $e_\alpha$ from all terms of $z$, with certain sign conventions for compatibility. 
  The  excitation operators $\chi_{\alpha,\beta}$ span a linear space $\mathcal{L}$
   of dimension $\binom{n}{d}$. When restricted to 
  $\mathcal{H}$, these operators span an
  $\binom{n}{d}$-dimensional subspace of $\operatorname{Hom}(\mathcal{H}, \mathcal{H})$.
     We write elements of this subspace as $\binom{n}{d} \times \binom{n}{d}$ matrices
    \begin{equation}
    \label{eq:T(t)}
    T(t) \,\,=\,\, \sum_{\alpha, \beta} t_{\alpha,\beta} X_{\alpha,\beta}.
\end{equation}
    Here $X_{\alpha,\beta}$ is the matrix for $\chi_{\alpha,\beta}$ restricted to $\mathcal{H}$. 
    Note that $T(t)$ is our earlier matrix $T(x)$.

    We now consider the truncation to $\sigma = \{1\}$. 
    Let $\mathcal{L}_\sigma$ denote the subspace of
    $\mathcal{L}$ spanned by 
    $\,\bigl\{ \chi_{i,j}\,: \,i \in [d],\,j \in [n] \backslash [d]\bigr\}$. Operators in $\mathcal{L}_\sigma$ are derivations, i.e.~for $\tau \in \mathcal{L}_\sigma$ we have
    \begin{equation}\label{derivation}
        \tau(x \wedge y) \,\,=\,\, \tau(x) \wedge y \,+\, x \wedge \tau(y).
    \end{equation}
Let    $  T_k$ denote the matrix for the linear map $\tau \in \mathcal{L}_\sigma$ restricted to $\wedge_k \RR^n$. From (\ref{derivation}) we infer
    \begin{equation}\label{matrixderivation}
        T_{k + 1} \,\,=\,\, T_k \wedge {\rm Id}_{n} \,+\,  {\rm Id}_{\binom{n}{k}} \wedge T_1.
    \end{equation}
    We further note that $T_k$ has nilpotency $k$, that is $T_k^{k + 1} = 0$. Since the summands on the right hand side of (\ref{matrixderivation}) commute, we conclude that
    $\exp{(T_{k + 1})} = \exp(T_k) \wedge \exp(T_1)$.
    The analogous property for Kronecker sums appears in \cite[Theorem~10.9]{NH}.
   By induction on $k$, 
\begin{equation}
\label{eq:expTt}
    \exp{(T(t))} \,\,= \,\,\wedge_d \exp{(T_1(t))}.
\end{equation}

Formula (\ref{eq:expTt}) shows that
    the first column of the matrix $\exp{(T(t))}$ consists of the $d \times d$ minors of the first $d$ columns of 
the $n \times n$ matrix    
    $\exp{(T_1(t))} = {\rm Id}_n + T_1(t)$. 
    These columns~are
$$ \begin{small}
    \left[\begin{array}{cccccccc}
         1 & 0 &  \cdots & 0  & t_{1,d+1} & t_{1,d+2} &  \cdots & t_{1,n} \\
    0 & 1 &  \cdots & 0  & t_{2,d+1} & t_{2,d+2} &  \cdots & t_{2,n} \\
    0 & 0 & \ddots & 0  & \vdots & \vdots & \ddots & \vdots \\
    0 & 0 &  \cdots & 1  & t_{d,d+1} & t_{d,d+2} &  \cdots & t_{n,n} \\
    \end{array}\right]^T. \end{small}
    $$    
    This holds because the operator     $\tau \in \mathcal{L}_\sigma$ acts on the basis vectors $e_i$ of
    $\RR^n = \wedge_1 \RR^n$ as follows:
    $$     \tau e_i \,\,= \sum_{j = d + 1}^n t_{i,j} \chi_{i,j}e_i \,\,= \sum_{j = d + 1}^n t_{i,j} e_j 
    \,\,\,\, {\rm for} \,\,   \, i \leq d \quad \,{\rm and} \quad
\,    \tau e_i \, = \, 0 \,\,\,\, {\rm for} \,\,\,i > d.    $$
We conclude that
    the first column of $\exp{(T(t))}$ gives the Pl\"ucker coordinates for ${\rm Gr}(d,n)$.
    \end{proof}

\begin{remark}
All Grassmannians are obtained from
the polynomials $\bar x_I(\psi)$ with $|I \backslash [d]|  \in \{2,3,\ldots,d\}$,
by the saturation in (\ref{eq:saturationideal}). 
Starting with these polynomials
for other level sets,
we obtain all truncation varieties. This is the sense in which
the $V_\sigma$
generalize ${\rm Gr}(d,n)$.
\end{remark}

There is a natural isomorphism between the vector spaces
$\wedge_d \RR^n$ and $\wedge_{n-d} \RR^n$, and hence between
corresponding projective spaces. This 
 swaps the Grassmannians
${\rm Gr}(d,n)$ and ${\rm Gr}(n-d,n)$.
The duality extends to all truncation varieties.
This is the content of the next result,
which is our algebraic interpretation of
{\em particle-hole symmetry} in electronic structure theory.

\begin{proposition} \label{prop:duality}
Fix a subset $\sigma$ of $[d]$ and let $n \geq 2d$.
There is a linear isomorphism~between two copies of
 $\,\PP^{\binom{n}{d}-1}$ which switches the truncation varieties
$V_\sigma$ for $(d,n)$ and  for $(n-d,n)$.
\end{proposition}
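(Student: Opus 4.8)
The plan is to realize the particle–hole duality at the level of the linear parameter spaces $\mathcal{V}_\sigma$ and then push it through the exponential parametrization. The starting point is the classical Hodge-star-type isomorphism $\star\colon \wedge_d \RR^n \to \wedge_{n-d}\RR^n$ sending $e_I$ to $\pm e_{I^c}$, where $I^c = [n]\backslash I$ and the sign is that of the shuffle permutation $(I,I^c)$ of $[n]$. This induces a linear isomorphism of the two copies of $\PP^{\binom{n}{d}-1}$. I would first check that $\star$ carries the level filtration for the pair $(d,n)$ to the level filtration for the pair $(n-d,n)$: if $|I\backslash[d]|=\ell$, then one computes $|I^c \backslash [n-d]|$ in terms of $\ell$. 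Writing $I = ([d]\backslash\alpha)\cup\beta$ with $\alpha\subseteq[d]$, $\beta\subseteq[n]\backslash[d]$, $|\alpha|=|\beta|=\ell$, the complement is $I^c = \alpha \cup (([n]\backslash[d])\backslash\beta)$; intersecting with the new occupied set $[n-d]$ (which one must set up carefully — a relabeling of indices is needed so that the ``reference state'' for $(n-d,n)$ matches up with $e_{[d]^c}$) shows the level is preserved. Consequently $\star$ maps $\mathcal{V}_\sigma^{(d,n)}$ isomorphically onto $\mathcal{V}_\sigma^{(n-d,n)}$, because both are spanned by the basis vectors of level in $\sigma\cup\{0\}$.

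The heart of the argument is then to show that $\star$ intertwines the two exponential parametrizations, i.e.\ that the following square commutes up to the identifications above:
\begin{equation*}
\begin{array}{ccc}
\mathcal{V}_\sigma^{(d,n)} & \xrightarrow{\ \exp(T(x))e_{[d]}\ } & \PP^{\binom{n}{d}-1} \\[4pt]
\big\downarrow{\scriptstyle\star} & & \big\downarrow{\scriptstyle\star} \\[4pt]
\mathcal{V}_\sigma^{(n-d,n)} & \xrightarrow{\ \exp(T(y))e_{[d]^c}\ } & \PP^{\binom{n}{d}-1}.
\end{array}
\end{equation*}
The cleanest way I see to do this is via the derivation description from the proof of Theorem~\ref{thm:grassmann}. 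The cluster operator $T(x)$ is a sum $\sum t_{\alpha,\beta}X_{\alpha,\beta}$ of restrictions of the excitation operators $\chi_{\alpha,\beta}$ on $\wedge\RR^n$, and these excitation operators are defined on the whole exterior algebra independently of $d$. The interior-product/wedge description $\chi_{\alpha,\beta}(z) = (e_\alpha \lrcorner z)\wedge e_\beta$ behaves predictably under the Hodge star: $\star$ exchanges wedging with $e_\beta$ and contracting against $e_\beta$ (up to sign), so $\star \circ \chi_{\alpha,\beta} \circ \star^{-1}$ is, again up to a sign, the excitation operator $\chi_{\beta',\alpha'}$ appropriate to the dual problem, under the index relabeling. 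Hence $\star$ conjugates the linear space $\mathcal{L}$ of cluster operators for $(d,n)$ to the one for $(n-d,n)$, and after matching parameters accordingly we get $\star \exp(T(x))\star^{-1} = \exp(T(y))$ as operators on $\wedge\RR^n$; evaluating both sides on $\star(e_{[d]}) = \pm e_{[d]^c}$ gives commutativity of the square. Taking closures of images then yields $\star(V_\sigma^{(d,n)}) = V_\sigma^{(n-d,n)}$.

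The main obstacle is bookkeeping the relabeling and the signs. To even state that the dual truncation variety uses the ``same'' $\sigma$, one has to fix a bijection $[n]\to[n]$ that sends the occupied orbitals $[d]$ of the original problem to the complement of the occupied orbitals $[n-d]$ of the dual problem (equivalently, swap the roles of occupied and virtual orbitals). Under this bijection the excitation operator $\chi_{\alpha,\beta}$ with $\alpha\subseteq[d]$, $\beta\subseteq[n]\backslash[d]$ must be matched with $\chi_{\beta^\sharp,\alpha^\sharp}$ where $\sharp$ is the relabeling — and then one must verify that the sign $\mathrm{sign}(I,I^c)$ built into $\star$ combines with the sign conventions in the definition of $T(x)$ (the products of signs of the permutations $I\mapsto(I\cap J, I\backslash J)$ and $J\mapsto (I\cap J, J\backslash I)$) so that the conjugated matrix $\star T(x)\star^{-1}$ really is $T(y)$ on the nose, with the correct signs on each entry, rather than merely up to a diagonal sign change. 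I expect this to reduce to the standard identity that the Hodge star squares to $\pm\mathrm{Id}$ and is an isometry for the induced pairing, together with the compatibility of $\lrcorner$ and $\wedge$ under $\star$ from \cite[Section 3.6]{GQ}; once those signs are pinned down the rest is formal. Since only the existence of \emph{a} linear isomorphism with the stated property is claimed, any residual global sign is harmless and can be absorbed into $\star$.
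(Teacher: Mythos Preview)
Your approach is correct and rests on the same underlying idea as the paper's proof: the linear isomorphism is complementation composed with a relabeling of $[n]$ that swaps occupied and virtual indices, and the task is to verify that this intertwines the two exponential parametrizations. Where you and the paper differ is in execution. You realize the isomorphism as the Hodge star $e_I\mapsto \pm e_{I^c}$ followed by a separate relabeling, and then propose to verify compatibility by conjugating the excitation operators $\chi_{\alpha,\beta}$ through $\star$ using the interior-product/wedge duality; this is conceptually clean but, as you note, forces you to chase the shuffle signs built into $\star$ against the sign conventions in $T(x)$. The paper instead writes down a single \emph{sign-free} coordinate relabeling
\[
I \,\mapsto\, I' = \{\,n{+}1{-}i : i \notin I\,\},\qquad
\alpha \,\mapsto\, \alpha' = \{\,n{+}1{-}j : j \in \beta\,\},\qquad
\beta \,\mapsto\, \beta' = \{\,n{+}1{-}k : k \in \alpha\,\},
\]
which simultaneously complements and reverses, sending the reference set $[d]$ directly to $[n{-}d]$. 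In the $(c_{\alpha,\beta},t_{\alpha,\beta})$ coordinates this is just the swap $(\alpha,\beta)\leftrightarrow(\alpha',\beta')$, under which the combinatorial definition of $T(x)$ is manifestly invariant; no Hodge-star signs ever enter. So your route works, but the paper's choice of isomorphism makes the bookkeeping you flag as the ``main obstacle'' disappear entirely.
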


\begin{proof}
The  Pl\"ucker coordinates on the two spaces are
$\psi_I$ with $|I| = d$
and $\psi_{I'}$ with $| I' | = n-d$.
Similarly, the coordinates in (\ref{eq:cpsi}) are $c_{\alpha,\beta}$ with 
$\alpha \subset [d]$ and $ \beta \subset [n] \backslash [d]$ 
for the first copy of $\PP^{\binom{n}{d}-1}$, and
 $c_{\alpha',\beta'}$ with $\alpha' \subset [n-d]$ and $\beta' \subset [n] \backslash [n-d]$
for the second copy  of $\PP^{\binom{n}{d}-1}$. The natural isomorphism
in the statement of the proposition is given by relabelling as follows:
$$ I \mapsto I' = \{ n{+}1{-}i : i \not\in I \},\,\,
\alpha \mapsto \alpha' = \{n{+}1{-}j : j \in \beta \},\,\,\,
\beta \mapsto \beta' = \{n{+}1{-}k: k \in \alpha \}. $$
One checks that our construction of  the matrix $T(x)$ and the
exponential parametrization in Section \ref{sec2} are invariant under this
  relabeling. It hence induces the desired isomorphism.
  \end{proof}
  
In light of this proposition, we shall assume $n \geq 2d$ in everything that follows.
In  Example~\ref{ex:dreisechs2} we saw a first
census of truncation varieties. We next present two further cases.

\begin{example}[$d=3,n=7$] \label{ex:dreisieben}
The six varieties correspond to the six columns in this table:
$$ \begin{matrix}
\sigma && \{1\} &\,\, \{2\} \,\,&\,\,\, \{3\} \,\,\,\,& \{1,2\} & \{1,3\} & \{2,3\} \\
{\rm dim} && 12 &  18 & 4 & 30 & 16 & 22  \\
{\rm degree} && 462 & 1 & 1 & 43 &405 & 1  \\
{\rm mingens} && \,[0,140]\, & [16] & [30] & [0,0,7] & [0,76,10] & [12] \\
{\rm CCdeg}_{3,7} &&  2883 & 19 & 5 & 1195 & 3425 & 287 \\
\end{matrix}
$$
The last row lists the CC degrees, to be introduced in Section \ref{sec5}.
The fourth row gives the number of minimal generators in degrees $1,2,3$
of the ideal $\mathcal{I}(V_\sigma)$.
All varieties live in $\PP^{34}$. The first column is the Grassmannian ${\rm Gr}(3,7)$.
Among the other five, three are linear spaces.
\end{example}

\begin{example}[$d=4,n=8$] \label{ex:vieracht}
The $14$ varieties live in $\PP^{69}$. Five of them are linear spaces:
$V_{\{3\}} \simeq \PP^{16}$, $V_{\{4\}} \simeq \PP^1$,
$V_{\{2,4\}} \simeq \PP^{37}$, $V_{\{3,4\}} \simeq \PP^{17}$,
 $V_{\{2,3,4\}} \simeq \PP^{53}$.
  The other nine are listed~here:
$$ \begin{footnotesize} \begin{matrix}
\sigma & \{1\} & \{2\} & \{1,2\} & \{1,3\} & \{1,4\} & \{2,3\} & \{1,2,3\} & \{1,2,4\} & \{1,3,4\} \\
  {\rm dim}         & 16  & 36  & 52 & 32 & 17 & 52 & 68 & 53 & 33 \\
 {\rm mingens}   & [0,721] & [32,1] & [0,\!0,63] & [0,237,200] & [0,668] & [16,1] & \! [0,\!0,\!0,\!1] \! & [0,46,120] & [0,\!236,\!200] \\
{\rm degree} &  24024 & 2 & 442066 & 24024 & 24203 & 2 & 4 & 221033 & 12012 \\
{\rm CCdeg}_{4,8} & 154441 & 73  &  ?? & 465915 & 177503 & 105 & 273  & ??& 245239
\end{matrix}
\end{footnotesize}
$$
Our methodology for
computing these degrees and CC degrees will be explained in Section~\ref{sec6}.
\end{example}

We saw in our examples that the truncation variety $V_\sigma$
is a linear space for various subsets $\sigma$ of $[d]$.
 The final theorem in this section identifies those subsets for which this happens.

\begin{theorem} \label{thm:linear}
The truncation variety $V_\sigma$ is a linear subspace
of $\,\PP^{\binom{n}{d}-1} $ if and only if  the index set
$\sigma$ is closed under addition, i.e.
if \,$i,j \in \sigma$  with $i + j \in[d]$ then $i + j \in \sigma$.
\end{theorem}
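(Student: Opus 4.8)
The plan is to prove both directions by analyzing the generators $\bar x_I(\psi)$ appearing in the saturation formula (\ref{eq:saturationideal}), and to identify exactly when all of these generators can be taken to be linear. The key structural fact, from Theorem~\ref{thm:UBP} and the discussion of the master polynomials, is that $x_I(\psi)$ is a sum over uniform block permutations $\pi \in \mathcal{U}_r$ (where $r = |I\setminus[d]|$ is the level), and each monomial $c_\pi = c_{\alpha_1,\beta_1}\cdots c_{\alpha_k,\beta_k}$ of degree $\geq 2$ involves coordinates of levels $|\alpha_1|,\ldots,|\alpha_k|$, all strictly less than $r$ and summing to $r$. So the \emph{nonlinear} part of $x_I(\psi)$ for an index of level $r$ is, up to relabeling, built from products of lower-level Plücker coordinates whose levels form a composition of $r$ into at least two parts.

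For the ``only if'' direction I would argue contrapositively: suppose $\sigma$ is \emph{not} closed under addition, so there exist $i,j\in\sigma$ with $i+j\in[d]\setminus\sigma$. Pick an index $I$ of level $r = i+j$; then $I\in[d]\setminus\sigma$, so $\bar x_I(\psi)$ is one of the generators in (\ref{eq:saturationideal}). Its associated master polynomial contains monomials $c_{\alpha_1,\beta_1}c_{\alpha_2,\beta_2}$ with $|\alpha_1| = i$, $|\alpha_2| = j$ — these correspond to genuine uniform block permutations with two blocks of those sizes, and the coefficients are $\pm 1$ (the $k=2$ Möbius value is $(-1)^{1}(1!) = -1$, combined with sign $\pm1$). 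Since $i,j\in\sigma$, the coordinates $c_{\alpha_1,\beta_1}$ and $c_{\alpha_2,\beta_2}$ are \emph{not} among the coordinates set to zero on $\mathcal{V}_\sigma$, so these quadratic monomials survive and do not cancel (distinct uniform block permutations give distinct monomials). Hence $\mathcal{I}(V_\sigma)$ genuinely contains an irreducible element of degree $\geq 2$ in coordinates that are not identically zero on $V_\sigma$, which forces $V_\sigma$ to be non-linear; one should double-check that the saturation step does not make this quadric reducible into linear factors, but since $V_\sigma$ is irreducible and the quadric is not a multiple of a linear form modulo the linear part of the ideal, $V_\sigma$ cannot be a hyperplane-or-lower linear space.

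For the ``if'' direction, assume $\sigma$ is closed under addition. I claim that for every index $I$ with $|I\setminus[d]| = r \in [d]\setminus\sigma$, the polynomial $x_I(\psi)$ is already linear modulo the other generators — more precisely, modulo the ideal generated by the $\psi_J$ with $|J\setminus[d]|\in[d]\setminus\sigma$ (the ``would-be linear generators'') together with the $\bar x_{I'}(\psi)$ for smaller levels. Here is the mechanism: in the master polynomial for level $r$, every nonlinear monomial is a product $c_{\alpha_1,\beta_1}\cdots c_{\alpha_k,\beta_k}$ with $k\geq 2$ and $\sum|\alpha_m| = r$. If \emph{some} $|\alpha_m| \in [d]\setminus\sigma$, that factor $c_{\alpha_m,\beta_m}$ is a coordinate that vanishes on $V_\sigma$, so the monomial is in the ideal. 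If \emph{every} $|\alpha_m|\in\sigma$, then since $\sigma$ is closed under addition, partial sums $|\alpha_1|+\cdots+|\alpha_m|$ would all lie in $\sigma$ — in particular the full sum $r = \sum|\alpha_m| \in \sigma$, contradicting $r\in[d]\setminus\sigma$. (This uses closure under addition iterated: $|\alpha_1|+|\alpha_2|\in\sigma$, then adding $|\alpha_3|\in\sigma$ gives a sum in $\sigma$, etc., provided all intermediate sums stay in $[d]$, which they do since they are bounded by $r\le d$.) Therefore \emph{every} nonlinear monomial of $x_I(\psi)$ lies in the ideal generated by the vanishing coordinates, so $x_I(\psi)$ reduces to its linear part $\pm\psi_I$ plus linear terms, and $\mathcal{I}(V_\sigma)$ is generated by linear forms; hence $V_\sigma$ is a linear space.

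\textbf{Main obstacle.} The delicate point is making the ``if'' direction fully rigorous: I need that the generators $\bar x_I(\psi)$ of (\ref{eq:saturationideal}), after discarding all monomials that are manifestly in the ideal (because some factor is a vanishing coordinate), reduce \emph{to linear forms} — and that the saturation by $\psi_{[d]}^\infty$ does not add new non-linear generators. The cleanest way to handle both is probably to induct on the level $r$: assuming the truncation variety's ideal is generated in degree $1$ through level $r-1$, show the level-$r$ generator reduces to a linear form, so no saturation is needed at any stage and $\mathcal{I}(V_\sigma)$ is visibly linear. One must also confirm the combinatorial claim that there is \emph{no} uniform block permutation of $[2r]$ all of whose block sizes lie in $\sigma$ when $r\notin\sigma$ and $\sigma$ is addition-closed — this is exactly the iterated-closure argument above, but it should be stated carefully since $\sigma$ is a subset of $[d]$ and one is implicitly using that all partial sums are $\leq r \leq d$.
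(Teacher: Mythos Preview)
Your ``if'' direction is essentially identical to the paper's: both argue by induction on the level that every nonlinear monomial in $x_K(\psi)$, for $K$ of level in $[d]\setminus\sigma$, must contain a factor whose level also lies in $[d]\setminus\sigma$, because otherwise closure under addition would force the total level into $\sigma$. Working in the affine chart $\mathcal{H}'$ (as the paper does) removes all your saturation concerns in one stroke: there the ideal is simply $\langle x_I(\psi) : |I\setminus[d]|\in[d]\setminus\sigma\rangle$, no saturation needed.

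For ``only if'' your idea is right but the argument as written has a gap. You exhibit a quadratic monomial $c_{\alpha_1,\beta_1}c_{\alpha_2,\beta_2}$ in $x_I(\psi)$ with both factor levels in $\sigma$, and then assert that $\mathcal{I}(V_\sigma)$ ``contains an irreducible element of degree $\geq 2$.'' But $x_I(\psi)$ has the linear term $\pm\psi_I$, so it is \emph{not} a degree-$\geq 2$ element; to conclude nonlinearity you must reduce modulo the linear part of the ideal and show the result is nonzero. You never identify that linear part, and with an arbitrary choice of $r=i+j$ it is not obvious what it is. The paper supplies the missing step: take $k$ to be the \emph{smallest} element of $[d]\setminus\sigma$ of the form $i+j$ with $i,j\in\sigma$. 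Minimality guarantees, by exactly your ``if''-direction reasoning, that every generator $x_J(\psi)$ of level $<k$ in $[d]\setminus\sigma$ reduces to the linear form $\pm\psi_J$. Now the linear part of the ideal is pinned down, and one can check that $x_K(\psi)$ (for $K$ of level $k$) is not in the ideal it generates. The paper finishes with a Gr\"obner-style argument: under any degree-compatible term order the initial monomial of $x_K(\psi)$ has degree $\geq 2$ and is not divisible by any monomial of another generator, hence is a minimal generator of the initial ideal of $\mathcal{I}(V_\sigma)$---which is impossible for a linear ideal. Your direct approach can also be completed once minimality is in place, but the key move you are missing is that choice of minimal~$k$.
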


\begin{proof}
    We identify $V_\sigma$ with its restriction to  the affine chart $\mathcal{H}' = \mathbb{R}^{\binom{n}{d}-1}$.
        First assume that $\sigma$ is closed under addition. We have $1 \notin \sigma$
        because $\sigma$ is a proper subset of $[d]$. 
        Hence $\psi_I = 0$ for all $I$ of level $1$. Consider $k \geq 2$ with $k \in [d] \backslash \sigma$.
        For all  $K$ with $|K\backslash [d]| = k$ we~have        
\begin{equation}
\label{eq:funnysum}
       x_K(\psi) \,\,=\,\, \pm\,\psi_K \,+\, \sum_j a_j \,\psi_{I^{(j)}_1} \psi_{I^{(j)}_2} \cdots    \psi_{I^{(j)}_{r_j}} 
              \,\,\,=\,\,\,0.
\end{equation}
    Here $a_j \in \mathbb{Z}\backslash \{0\}$ and  $\sum_{s=1}^{r_j}
    | I^{(j)}_{s}\backslash [d]| = k$ for each $j$, 
    i.e.~the levels of the variables in each monomial sum up to $k$. 
    Since $k \in [d] \backslash \sigma$ and $\sigma$ is closed under addition, each monomial contains some $\psi_{I}$ of level $i < k$ and $i \in [d] \backslash \sigma$. By induction, $\psi_{I} = 0$. This now implies $\psi_K = 0$. 
        The restriction $V_\sigma \cap \mathcal{H}'$ is thus linear and therefore its projective closure $V_\sigma$ as well.

    Next suppose that $\sigma$ is not closed under addition. Fix the smallest $k \in  [d] \backslash \sigma$ such that $k = i + j$ for some $i,j \in \sigma$. By the same argument as above,  $\psi_I = \pm x_I(\psi) = 0$ for all $\psi_I$ of level $i < k$ where $i \in [d] \backslash \sigma$. Consider any level $k$  equation 
    (\ref{eq:funnysum}) that holds
    on the variety $V_\sigma$.
Fix any degree-compatible monomial order.  The initial monomial of $x_K(\psi)$ has degree $> 1$:
 $$   \operatorname{in}(x_K(\psi)) \,\,=\,\, 
 \psi_{I^{(j)}_1} \psi_{I^{(j)}_2} \cdots    \psi_{I^{(j)}_{r_j}}
 \qquad {\rm where} \,\,\, r_j \geq 2. $$
  Since $k$ is minimal, no monomial appearing in $x_I(\psi) \in P_\sigma = \mathcal{I}(V_\sigma)$
 divides the above initial monomial $ \operatorname{in}(x_K(\psi))$. 
Hence $\operatorname{in}(x_K(\psi))$
 is a minimal generator for the initial ideal of $\,P_\sigma$.
 This cannot be an initial ideal for a linear variety, and therefore $V_\sigma$ itself is not linear.
\end{proof}

\section{Discretization and Hamiltonians}
\label{sec4}

The electronic structure Hamiltonian is
a symmetric matrix $H$ of size $\binom{n}{d} \times \binom{n}{d}$,
describing $d$ electrons relative to a discretization with $n$ spin-orbitals.
The primary objective is to compute the quantum states that are eigenvectors of $H$, 
i.e.~chemists wish to solve the~equation 
\begin{equation}
\label{eq:eigenvector}  H \psi \,\,=\,\, \lambda \psi .
\end{equation}
In CC theory, this eigenvalue problem is replaced by a polynomial system known as the CC equations.
These will be formulated in Section \ref{sec5}.
In this section we explain where the Hamiltonian $H$ comes from.
The material that follows serves as an introductory guide for algebraists.
No background in chemistry is assumed, beyond what is taught in high school.

The starting point of our analysis is the {\em electronic Schrödinger equation}
\begin{equation} \label{eq:schroedinger}
\mathscr{H} \,\Psi(\mathbf{x}_1,\vx_2,\ldots ,\mathbf{x}_d) \,\,=\,\, 
\lambda \,\Psi(\mathbf{x}_1,\vx_2 \ldots,\mathbf{x}_d).
\end{equation}
From this differential equation, we shall derive a finite-dimensional eigenvalue problem (\ref{eq:eigenvector}).
The unknown in (\ref{eq:schroedinger}) is
the {\em wave function} $\Psi(\mathbf{x}_1,\vx_2,\ldots,\mathbf{x}_d)$.
The arguments in this function are pairs $\vx_i = (\vr_i, s_i)$ where
$\vr_i= (r_i^{(1)},r_i^{(2)},r_i^{(3)})$ are points in $\RR^3$ that represent the positions of
$d$ electrons, and $s_i\in \{\pm1/2\}$ describes the electronic spin (vide infra). We assume that $\Psi$ is sufficiently differentiable~\cite{Y}.
By {\em Pauli's exclusion principle}, the wave
function $\Psi$ must be antisymmetric in its $d$ arguments, i.e.~if ${\bf x}_i$ and
${\bf x}_j$ are switched then $\Psi$ is replaced by $-\Psi$.
The Hamiltonian   $\mathscr{H}$ in (\ref{eq:schroedinger}) is a second order differential operator
which describes the behavior of $d$ interacting electrons in the vicinity of $d_{\rm nuc}$ stationary nuclei.
The formula~is
\begin{equation}
\label{eq:ElecSE}
\mathscr{H}
\,\,=\,\,
-\,\frac{1}{2}\sum_{i=1}^d\Delta_{\vr_i}
\,-\,\sum_{i=1}^d\sum_{j=1}^{d_{\rm nuc}}\frac{Z_j}{\vert \vr_i-\vR_j\vert} 
\,+\,\sum_{i=1}^d\sum_{j=i+1}^d\frac{1}{\vert \vr_i-\vr_j\vert}.
\end{equation}

The symbol $\Delta_{\vr_i}$ in the leftmost sum denotes the Laplacian
$\, \sum_{j=1}^3 (\partial /\partial r_i^{(j)} )^2$.
All other summands in (\ref{eq:ElecSE}) act  on $\Psi$ by multiplication.
They contain constants which we now explain.
The atoms and their nuclei
are indexed by $j=1,2,\ldots,d_{\rm nuc}$.
The constant $Z_j$ is the $j$th nuclear charge.
This is the atomic number listed in the periodic table, i.e. $Z_j$ is a positive integer.
The position of the $j$th nucleus is the point $\vR_j \in \mathbb{R}^3$ which is also constant.
We here consider only charge-neutral molecules.
This means that $d=\sum_{j=1}^{d_{\rm nuc}} Z_j$ and $d_{\rm nuc} \leq d$. 

The following molecule will serve as our running example, both here and in Section \ref{sec6}.

\begin{example}[Lithium hydride]
This molecule has the formula LiH. It involves
$d_{\rm nuc} \! =\! 2$ atoms, namely
lithium Li and hydrogen H. Their atomic numbers are $Z_1 =3$ and
$Z_2 = 1$, so the number of electrons is $d = Z_1 + Z_2 = 4$.
The two nuclei are fixed at locations $\vR_1$ and $\vR_2$, whereas the
four electrons have variables $\vx_1,\vx_2,\vx_3,\vx_4$.
Here, $\Psi$ is an unknown function of $16$ scalars. It satisfies
$\Psi(\vx_1,\vx_2,\vx_3,\vx_4) = - \Psi(\vx_2,\vx_1,\vx_3,\vx_4) =  \cdots =
-\Psi(\vx_1,\vx_2,\vx_4,\vx_3)$.
\end{example}

The next step is to construct a finite-dimensional space of functions, along
with a suitable basis, which contains
 an approximate solution to the electronic Schr\"odinger equation 
(\ref{eq:ElecSE}). The restriction of $\mathscr{H}$ to that finite-dimensional space
will be our symmetric matrix $H$.

 There are many ways to select a suitable basis, and hence a discretization of $\mathscr{H}$.
 See~\cite{LBV} for an overview. We apply the method called
{\em linear combination of atomic orbitals} (LCAO). This is used widely 
in quantum chemistry. The LCAO method starts with {\it atomic orbitals}.

We select a set 
 $ \bigl\{ \chi_1,\chi_2,\ldots,\chi_k\}$
 of atomic orbitals.
 These are sufficiently smooth functions $\chi_i : \RR^3 \rightarrow \RR$
 which are linearly independent. 
 Atomic orbitals encompass substantial physical principles.
 Readers can refer to~\cite[Chapters 5, 6 and 8]{HJO} for a comprehensive explanation and motivation. 
Notably, atomic orbital basis sets for different atoms are well-documented and available through online data resources like
 \url{www.basissetexchange.org}. The number $k$ of
 atomic orbitals is greater than or equal to the number $d$ of
 electrons, i.e.~$d \leq k$. The equality $d=k$ can hold.
The number $k$ of atomic orbitals determines the total count $n$ of one-particle basis functions used in the discretization. We shall be led shortly to $n=2k$.

\begin{example}[$d=k=4$] \label{ex:revisitLiH}
Revisiting the lithium hydride (LiH) molecule, we
select $k=4$ atomic orbitals, three for lithium and one for hydrogen.
A graphical representation of these four atomic orbitals is shown in
Figure~\ref{fig:AOsPlot}.
The pictures are iso-surfaces
for $\chi_1,\ldots,\chi_4$. An {\em iso-surface} has the form
$\{ {\bf r} \in \RR^3: \chi_i({\bf r}) = c \}$,
for a constant $c$ that can be positive or~negative.

\begin{figure}[!ht]
    \centering
    \includegraphics[width = \textwidth]{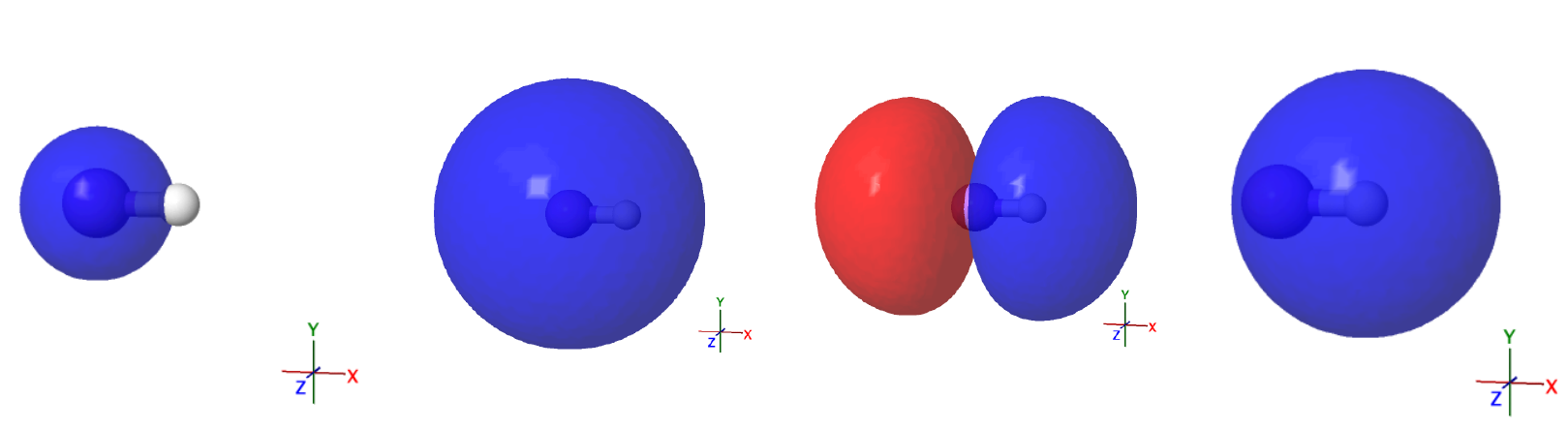}
    \caption{
    Iso-surfaces of four atomic orbitals (Li: 1s, 2s, 2p$_0$; H: 1s) of lithium hydrate. The iso-surfaces correspond to the value $\pm 0.025$, with blue for $c = +0.025$ and red for $c = -0.025$.}
    \label{fig:AOsPlot}
\end{figure}
\end{example}

Before proceeding with the LCAO approach, we need to account for the 
{\em electronic spin}, a crucial degree of freedom in electronic structure theory that distinguishes  physical states.
The inclusion of spin doubles the size of our basis: 
each atomic orbital $\chi_i$ will be replaced by two functions.
Even though the electronic Schr\"odinger equation~\eqref{eq:ElecSE} does not explicitly include the electronic spin, it remains a significant factor in electronic structure calculations~\cite{Y}. 
This explains the equation $n=2k$ we stated before Example \ref{ex:revisitLiH}.
Note that $n \geq 2d$.

The electronic spin can take two possible values: ``spin up'' (+1/2) or ``spin down''~(-1/2). 
To incorporate this, we introduce a spin variable $s\in\{\pm 1/2\}$ and two binary functions
$$
m_0(s) \,=\, 
\left\lbrace
\begin{split}
1 \quad &{\rm if}~ s=+1/2,\\
0 \quad &{\rm if}~ s=-1/2,
\end{split}
\right.
\quad 
{\rm and}
\quad
m_1(s)\, =\, 
\left\lbrace
\begin{split}
1 \quad &{\rm if}~ s=-1/2,\\
0 \quad &{\rm if}~ s=+1/2.
\end{split}
\right.
$$

The atomic orbitals can be separated into spatial and spin components, namely
\begin{equation}
\phi_i(\vr, s) \,\, = \,\,
\chi_i(\vr)\,   m_0(s) 
\quad
{\rm and}
\quad
\phi_{k+i}(\vr, s) \,\, = \,\,
\chi_i(\vr) \,  m_1(s) \quad {\rm for} \,\,\, i=1,2,\ldots,k.
\end{equation}
This factorization simplifies the treatment of electronic spin, making it possible to handle the spatial and spin degrees of freedom independently in calculations.
In order to simplify notation, we replace $\mathbb{R}^3$ by
$\,X := \mathbb{R}^3 \times \{\pm 1/2 \}$, and
we introduce compound coordinates  $\vx = (\vr, s)$ on $ X$.
Using these, we equip the atomic orbital space on $X$ with the inner product 
\begin{equation}
\label{eq:InnerProductMOs}
\langle
\phi_i, \phi_j
\rangle_{L^2(X)}
\,\,:=\,\,
\int \! \phi_i (\vx) \phi_j(\vx) d\vx \,\,\, = \!
\sum_{s \in \{\pm 1/2\}} \!\!\!\!\! m_{\lfloor \frac{i}{k+1}\rfloor}(s)\, m_{\lfloor \frac{j}{k+1}\rfloor}(s) \!
\int_{\mathbb{R}^3}\! \chi_{i} (\vr)\, \chi_{j}(\vr) d\vr,
\end{equation}
where the indices of $\chi_i$ and $\chi_j$ in the right integral are understood as $i$ and $j$
modulo $k+1$.

In principle, we find a {\em Galerkin basis} for $\mathscr{H}$ by 
passing from  the $\phi_i$ to $d$-particle functions. 
However, the LCAO method introduces an additional set of orthonormal functions known as {\it molecular spin orbitals}, which describe the behavior of individual electrons within the molecule. 
The molecular orbitals resemble the atomic orbitals. They are linear combinations:
\begin{equation} \label{eq:hartreefock}
\xi_i \,\,= \,\,\sum_{j=1}^{n} \,C_{j,i} \phi_j \qquad \hbox{for} \quad i=1,2,\ldots,n.
\end{equation}
The functions $\xi_j$ and $\phi_i$ span the same 
$n$-dimensional vector space. 
The determination of the expansion coefficients $C_{j,i}$ typically involves employing 
{\em Hartree-Fock theory}, as explained in~\cite[Chapter 10]{HJO} or \cite[Chapter 2.1]{LJ}.
For our specific example, lithium hydride, the molecular orbitals obtained from (spin-restricted) Hartree-Fock theory are depicted in Figure~\ref{fig:MOsPlot}.
 
\begin{figure}[!ht]
    \centering
    \includegraphics[width = \textwidth]{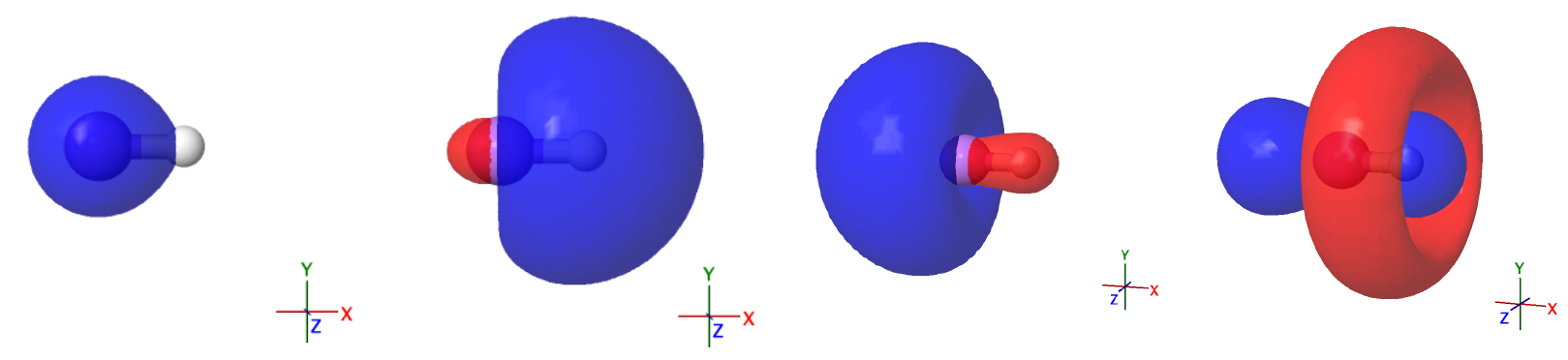}
    \caption{Iso-surfaces of four molecular orbitals of lithium hydrate for $s=+1/2$. The iso-surfaces correspond to the value $\pm 0.04$, with blue for $+0.04$ and red for $-0.04$.}
    \label{fig:MOsPlot}
\end{figure}

We shall now employ the molecular orbitals to construct $d$-particle functions.
Since we are investigating electrons (i.e. fermions, which must adhere to Pauli's exclusion principle),
 the $d$-particle functions must be anti-symmetric~\cite[Chapter 4.2]{Y}.
Starting from the molecular orbitals, we form a Galerkin basis of $\binom{n}{d}$ anti-symmetric $d$-particle functions
as follows:
$$
\Phi_I(\vx_1,\ldots,\vx_d)=
\Phi_{i_1, \ldots,i_d}(\vx_1,\ldots,\vx_d) \,= \,
\frac{1}{\sqrt{d!}}
(\xi_{i_1} \wedge  \cdots \wedge \xi_{i_d})(\vx_1,\ldots,\vx_d) \,\,\, \hbox{where} \,\,
I = \{i_1,\ldots,i_d\}.
$$

In order to calculate matrix elements, we equip the Galerkin space with the inner product 
\begin{equation}
\label{eq:innerprod}
\langle \Phi_I, \Phi_J \rangle_{\left(L^2(X)\right)^d}
\,\,=\,\, \prod_{p=1}^d \,\langle \,\xi_{i_p}\, ,\, \xi_{j_p}\, \rangle_{L^2(X)}.
\end{equation}
The evaluation of this inner product requires the numerical evaluation of non-trivial integrals.
A common way to circumvent this intricate numerical integration is to use Gaussian-type atomic orbitals~\cite[Chapter 8]{HJO}.
With this, we can proceed to discretize the Hamiltonian $\mathscr{H} $.
The following real numbers are the entries in our symmetric matrix $H$
of format $\binom{n}{d} \times \binom{n}{d}$:
\begin{equation}
\label{eq:matrixElem}
H_{I,J} \,\,= \,\,
\langle 
\Phi_I, \mathscr{H} \Phi_J
\rangle_{\left(L^2(X)\right)^d}.
\end{equation}
This matrix $H$ is known as the {\em electronic structure Hamiltonian in first quantization}. 
Its entries are parameters in the CC equations which will be introduced in the next section.

\begin{example}[Lithium hydride] \label{ex:morerunning}
In our running example, we have  $d=4$ and $n=2d=8$.
Our Hamiltonian $H$ for LiH is a symmetric $70 \times 70$ matrix
which is computed as follows.
Using PySCF~\cite{PyScf1}, 
we evaluate the following integrals 
for all indices $p,q,r,s \in [8]$:

\begin{footnotesize} \vspace{-0.2in}
$$
h_{p,q}
=
\int
\xi_{p}(\vx) \left(
-\frac{\Delta_{\vr}}{2} - \sum_{j}\frac{Z_j}{| \vr - \vR_j|}
\right)\xi_{q}(\vx)d\vx
\quad
{\rm  and} 
\quad
v_{p,q,r,s}
=
\int
\frac{
\xi_p(\vx) \xi_q(\vx) \xi_r(\vx') \xi_s(\vx')
}{|\vr - \vr'|} d\vx d\vx'.
$$
\end{footnotesize}
\noindent
This uses the molecular orbital basis $\{\xi_1(\vx),\ldots, \xi_8(\vx)\}$ 
obtained from Hartree-Fock theory.
For computing the matrix elements in (\ref{eq:matrixElem}), we 
expand the $d$-particle functions $\Phi_I$ and~$\Phi_J$ using antisymmetry:
$$
\Phi_I(\vx_1,\ldots,\vx_4)
\,\,=\,\,
\frac{1}{\sqrt{24}}
\sum_{\pi \in S_I}
{\rm sgn}(\pi) \xi_{\pi(i_1)}(\vx_1)\cdot \ldots \cdot  \xi_{\pi(i_4)}(\vx_4).
$$
The fact that each molecular orbital is a function in one variable allows us to factorize the integral expression. 
In this factorized expression $h_{p,q}$ and $v_{p,q,r,s}$ will make their appearance. 
Finally, we use the orthogonality of
the molecular orbitals with respect to the inner product defined in~(\ref{eq:InnerProductMOs}).
This yields the following expression for the entries of the desired matrix:

\begin{footnotesize} \vspace{-0.2in}
\begin{equation*}
\begin{aligned}
H_{I,J} \,=  &
\sum_{
\substack{
\rho \in S_I\\
\pi\in S_J}} \!
{\rm sgn}(\rho) \,
{\rm sgn}(\pi)
\sum_{\ell=1}^4
\biggl(
h_{\rho(i_\ell),\pi(j_\ell)} \!
\prod_{\ell \neq k = 1}^4 \delta_{\rho(i_k), \pi(j_k)}
\,\,+\,\,
\sum_{j>\ell}^4
v_{\rho(i_\ell),\pi(j_\ell),\rho(i_j),\pi(j_j)}
\! \prod_{\ell \neq j \neq k = 1}^4 \!\! \delta_{\rho(i_k), \pi(j_k)}
\biggr).
\end{aligned}
\end{equation*}
\end{footnotesize}

\noindent
In conclusion, by means of PySCF, we compute a $70 \times 70$ matrix $H$,
to be used in Example~\ref{ex:ending48}.
\end{example}

\section{The CC Equations}
\label{sec5}

We now present the coupled cluster (CC) equations. 
These approximate the eigenvalue problem (\ref{eq:eigenvector}). 
The ambient space will be one of the truncation varieties~$V_\sigma$. 
The equations are determined by a Hamiltonian $H$ as in Section \ref{sec4}.
A first systematic study, with a focus on Newton polytopes, was undertaken in \cite{FO}. 
In what follows we derive the CC equations from the perspective of algebraic geometry,
 leading to an alternative description. 
In Theorem \ref{thm:twoformulations} we examine to what extent our formulation here is equivalent to the one seen in \cite{FL, FO}.

Let $\sigma $ be a non-empty proper subset of $[d]$.
The set of indices with level in $\sigma$ is denoted
$$ \begin{matrix} \widetilde \sigma  \, = \, \bigl\{  I \in \binom{[n]}{d} \, : \,| I \backslash [d] | 
 \in \sigma \bigr\}. \end{matrix} $$
 We already saw that the dimension of the truncation variety $V_\sigma$ equals the cardinality $| \widetilde \sigma |$.
For the Grassmannian $V_{\{1\}} = {\rm Gr}(d,n)$, the set $\widetilde{ \{1 \}}$ consists of all $d$-tuples that differ from $[d] $ in exactly one index, so we have $| \widetilde{\{1\}} | = d(n-d) = {\rm dim}({\rm Gr}(d,n))$.
For $\psi \in \mathcal{H}$, we denote by $\psi_\sigma $ the truncation of
the vector $\psi$ to the coordinates $\psi_I$ where 
$I \in   \widetilde \sigma \cup \{[d] \}$.
Our problem~is:
\begin{equation}
\label{eq:CC1}
\hbox{
Compute all
$\psi \in V_\sigma$ with
$\psi_{[d]} \not= 0$ such that
$(H \psi)_\sigma$ and $\psi_\sigma$ are linearly dependent.}
\end{equation}
This translates into a system of quadratic equations on the projective variety $V_\sigma$. 
This is the system of {\em CC equations}. 
The linear dependency condition in (\ref{eq:CC1}) can be expressed by the $2 \times 2$ minors of the $ (|\widetilde \sigma| {+} 1) \times 2$ matrix $\bigl[ (H \psi)_\sigma \, ,\psi_\sigma \bigr]$.
This represents the truncation of~(\ref{eq:eigenvector}):
\begin{equation}
\label{eq:trunceigen}
(H \psi)_\sigma \,\,=\,\, \lambda \,\psi_\sigma . 
\end{equation}
The number  $|\widetilde \sigma|$ of constraints imposed by this equation equals the dimension of $V_{\sigma}$, so we expect the CC equations to have a finite number of solutions for generic $H$. This is indeed the case. 
We call this number the {\em CC degree} of the variety $V_\sigma$, denoted by ${\rm CCdeg}_{d,n}(\sigma)$.

\begin{example}[$d=2, \sigma =\{1\} $] \label{ex:G2nfirst}
Consider the CC equations on the Grassmannian ${\rm Gr}(2,n)$.
This has $\binom{n}{2}$ Pl\"ucker coordinates $\psi_{ij}$, and $\psi$ is the column vector 
with these coordinates. The truncated column vector $\psi_\sigma$ has $2n-3$ entries, 
namely all coordinates $\psi_{ij}$ where $\{i,j\} \cap \{1,2\} \not= \emptyset$.
These coordinates form a transcendence basis for the coordinate ring of ${\rm Gr}(2,n)$.
The Hamiltonian $H$ is a matrix of format $\binom{n}{2} \times \binom{n}{2}$.
Then $H \psi$  and $(H \psi)_\sigma$ are column vectors of length $\binom{n}{2}$ and $2n-3$ respectively.
The CC equations (\ref{eq:CC1}) impose $2n-4$  conditions
on the $(2n-4)$-dimensional variety ${\rm Gr}(2,n)$.
See Conjecture \ref{conj:G2n}
for the number of solutions.
\end{example}

The CC degree is the number of solutions to the CC equations.
Some non-trivial values of ${\rm CCdeg}_{d,n}(\sigma)$
 were  shown in the tables of
Examples \ref{ex:dreisieben} and \ref{ex:vieracht}.
We have the following general upper bound for the CC degree
in terms of the degree of the truncation variety $V_\sigma$.

\begin{theorem} \label{thm:generalbound}
For any Hamiltonian $H$, the number of isolated solutions to 
(\ref{eq:CC1}) satisfies
\begin{equation}
\label{eq:CCbound}
 {\rm CCdeg}_{d,n}(\sigma) \quad \leq \quad
\bigl({\rm dim}(V_\sigma) + 1 \bigr)  \, {\rm deg}(V_\sigma). 
\end{equation}
\end{theorem}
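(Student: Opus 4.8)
The plan is to view the CC equations as the intersection of the projective variety $V_\sigma$ with a linear-algebraic incidence condition, and then to bound the number of solutions by a Bézout-type argument on the appropriate product space. Concretely, the linear dependency condition in (\ref{eq:CC1}) says that the two vectors $(H\psi)_\sigma$ and $\psi_\sigma$ in $\CC^{|\widetilde\sigma|+1}$ are proportional; away from the locus $\psi_\sigma = 0$ this means there exists a scalar $\lambda$ with $(H\psi)_\sigma = \lambda\,\psi_\sigma$, i.e. equation (\ref{eq:trunceigen}). I would introduce $\lambda$ as an auxiliary unknown and study the incidence variety
\begin{equation}
\label{eq:incidence}
\Gamma \,\,=\,\, \bigl\{ (\psi,\lambda) \in V_\sigma \times \CC \,:\, (H\psi)_\sigma = \lambda\,\psi_\sigma \bigr\}.
\end{equation}
For generic $H$ the fiber of $\Gamma \to V_\sigma$ over a point with $\psi_\sigma \neq 0$ is a single value of $\lambda$, so counting solutions to (\ref{eq:CC1}) amounts to counting points of $\Gamma$ (minus spurious components where $\psi_\sigma = 0$, which for generic $H$ do not meet $V_\sigma$ in the relevant way).

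The key step is to realize the count of $|\Gamma|$ via a degree computation. Fix an affine chart $\psi_{[d]} = 1$ where, by Theorem \ref{thm:primeideal}, $V_\sigma$ is a complete intersection of dimension $|\widetilde\sigma| = \mathrm{dim}(V_\sigma)$. The $|\widetilde\sigma|$ equations $(H\psi)_I = \lambda\,\psi_I$ for $I \in \widetilde\sigma$ are bilinear: linear in the homogeneous coordinates $\psi$ (of the projective point) and linear in $\lambda$. I would homogenize and work in $\PP^{\binom nd - 1} \times \PP^1$, where $V_\sigma \times \PP^1$ has dimension $\mathrm{dim}(V_\sigma) + 1$, and the $|\widetilde\sigma|+1$ bihomogeneous conditions coming from (\ref{eq:trunceigen}) (including the one indexed by $[d]$) cut it down. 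A multihomogeneous Bézout / intersection-theory argument on $V_\sigma \times \PP^1$ then bounds the number of isolated points: each hypersurface class $(H\psi)_I s_0 - \psi_I s_1 = 0$ has bidegree $(1,1)$, and expanding $\bigl(\,[\mathrm{hyperplane \ in \ }\PP^{N}] + [\mathrm{point \ in \ }\PP^1]\,\bigr)^{\mathrm{dim}(V_\sigma)+1}$ against the fundamental class of $V_\sigma \times \PP^1$, and using that $\PP^1$ contributes at most one power of its hyperplane class, leaves exactly the term $(\mathrm{dim}(V_\sigma)+1)\,\mathrm{deg}(V_\sigma)$ as the coefficient — all higher powers of the $\PP^1$-class vanish. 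This yields the bound in (\ref{eq:CCbound}).

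The main obstacle, and the step needing the most care, is bookkeeping the spurious solutions: points of $\Gamma$ (or of its projective closure) for which $\psi_\sigma = 0$, or points at infinity, or the extra intersection contributions on the boundary of the affine chart. One must argue that for a generic Hamiltonian $H$ these do not inflate the count of genuine isolated solutions to (\ref{eq:CC1}), or that they are accounted for within the Bézout total so that the bound still holds (the inequality, not equality, gives us this slack). I would handle this by a dimension count: the locus $\{\psi \in V_\sigma : \psi_\sigma = 0\}$ is a proper subvariety (since the coordinates indexed by $\widetilde\sigma \cup \{[d]\}$ include $\psi_{[d]}$ and form, by the proof of Theorem \ref{thm:primeideal}, a coordinate system transcendence-compatible with the parametrization — in particular $\psi_{[d]} \not\equiv 0$ on $V_\sigma$), and for generic $H$ the system (\ref{eq:trunceigen}) has no isolated solution there. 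Finally, I would note that the bound is stated as an inequality precisely so that the presence of positive-dimensional excess components or boundary contributions only ever removes isolated points from the generic count, never adds them — so the multihomogeneous Bézout number is a valid upper bound even without resolving every excess-intersection subtlety.
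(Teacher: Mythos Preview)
Your argument is correct and lands on the same B\'ezout count as the paper, but the packaging differs slightly. The paper stays entirely inside $\PP^{\binom{n}{d}-1}$: it observes that the locus where the $(N{+}1)\times 2$ matrix $\bigl[(H\psi)_\sigma,\psi_\sigma\bigr]$ has rank $\le 1$ is (for generic $H$) a cone over a generic linear section of the Segre variety $\PP^1\times\PP^N\subset\PP^{2N+1}$, hence has codimension $N$ and degree $N{+}1$; then ordinary B\'ezout with $V_\sigma$ gives $(N{+}1)\,\deg(V_\sigma)$. You instead pass to the product $V_\sigma\times\PP^1$, treat the $N{+}1$ equations $(H\psi)_I\,s_0-\psi_I\,s_1=0$ as bidegree-$(1,1)$ hypersurfaces, and extract the same number via multihomogeneous B\'ezout. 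These are two standard and equivalent ways to compute the degree of a rank-$1$ determinantal condition; the Segre embedding is precisely what links them. The paper's version is a little cleaner because it never needs to introduce $\lambda$ or worry about the boundary $s_0=0$ in $\PP^1$: the determinantal variety already has the right codimension and degree in the ambient space, and the refined B\'ezout inequality (isolated points bounded by the product of degrees, even in the presence of excess components) handles special $H$ in one line. Your handling of the ``spurious'' locus $\psi_\sigma=0$ and of special $H$ is fine in spirit, but you can shortcut it the same way: once you know the generic multidegree count, semicontinuity of the number of isolated points under specialization gives the inequality for all $H$ without any further dimension-chasing.
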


\begin{proof}
We set $N = |\widetilde \sigma| = {\rm dim}(V_\sigma)$.
For generic Hamiltonians $H$,
the variety in $\PP^{\binom{n}{d}-1}$ defined by the $2 \times 2 $ minors of
the $(N{+}1) \times 2$ matrix $\bigl[ (H \psi)_\sigma \, ,\psi_\sigma \bigr]$
has codimension $N$
and degree
$N{+}1 $. Geometrically, this variety is a cone over a generic section of
the Segre variety $\PP^1 \times \PP^N  \subset \PP^{2N+1}$.
The first factor on the right-hand side in~(\ref{eq:CCbound})
is $N+1 = {\rm deg}(\PP^1 \times \PP^N)$.
The intersection with $V_\sigma$ has expected dimension zero,
but it can  have higher-dimensional components.
We are interested in the number of isolated solutions.
 By B\'ezout's Theorem, this number is at most the product of the degrees of the two varieties,
 seen on the right in (\ref{eq:CCbound}).
That bound on the number of isolated solutions also holds for special matrices $H$.
\end{proof}

We note that the equality holds in (\ref{eq:CCbound}) for the linear
cases described in Theorem \ref{thm:linear}.

\begin{corollary} \label{cor:usual}
Suppose that  $\,V_\sigma$ is a linear space. Then
$ {\rm CCdeg}_{d,n}(\sigma) = {\rm dim}(V_\sigma) + 1$,
we have $(H \psi)_\sigma = H_{\sigma,\sigma}   \psi_\sigma$,
and (\ref{eq:trunceigen}) is the eigenvalue problem for the symmetric
 matrix $H_{\sigma,\sigma}$.
In particular, all complex solutions to the CC equations (\ref{eq:CC1}) are~real.
\end{corollary}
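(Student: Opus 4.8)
The plan is to unpack what it means for $V_\sigma$ to be a linear space and read off the three claimed consequences one at a time. By Theorem~\ref{thm:linear}, $V_\sigma$ being linear is equivalent to $\sigma$ being closed under addition; and from the proof of that theorem we know exactly which coordinates survive: $\psi_I = 0$ for every $I$ whose level lies in $[d]\backslash\sigma$. So in the affine chart $\psi_{[d]}=1$, the variety $V_\sigma \cap \mathcal{H}'$ is the coordinate subspace cut out by $\{\psi_I = 0 : |I\backslash[d]|\in[d]\backslash\sigma\}$, and $\psi_\sigma$ (the truncation to $I\in\widetilde\sigma\cup\{[d]\}$) is simply the vector of \emph{all} nonzero coordinates of $\psi\in V_\sigma$. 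In particular a point of $V_\sigma$ is determined by $\psi_\sigma$, so we may identify $V_\sigma$ with the projective space $\PP^N$ on the coordinates $\psi_I$, $I\in\widetilde\sigma\cup\{[d]\}$, where $N = |\widetilde\sigma| = {\rm dim}(V_\sigma)$.

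\textbf{Step 1: the identity $(H\psi)_\sigma = H_{\sigma,\sigma}\,\psi_\sigma$.} Write $H$ in block form relative to the partition of $\binom{[n]}{d}$ into $\widetilde\sigma\cup\{[d]\}$ and its complement. For $\psi\in V_\sigma$ the complementary block of $\psi$ vanishes, so $(H\psi)_I = \sum_{J\in\widetilde\sigma\cup\{[d]\}} H_{I,J}\psi_J$ for every $I$; restricting the output index $I$ to $\widetilde\sigma\cup\{[d]\}$ gives exactly $(H\psi)_\sigma = H_{\sigma,\sigma}\,\psi_\sigma$, where $H_{\sigma,\sigma}$ is the principal submatrix of $H$ on rows and columns indexed by $\widetilde\sigma\cup\{[d]\}$. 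This is a one-line bookkeeping argument once the vanishing of the complementary coordinates is in hand.

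\textbf{Step 2: the CC equations become a genuine eigenvalue problem.} Substituting Step~1 into (\ref{eq:trunceigen}), the CC equations on $V_\sigma\cong\PP^N$ read $H_{\sigma,\sigma}\,\psi_\sigma = \lambda\,\psi_\sigma$ with $\psi_\sigma \neq 0$: precisely the eigenvector problem for the $(N{+}1)\times(N{+}1)$ matrix $H_{\sigma,\sigma}$. Since $H$ is symmetric, so is its principal submatrix $H_{\sigma,\sigma}$; hence it has $N+1$ real eigenvalues counted with multiplicity, and for generic $H$ these are distinct, giving $N+1$ distinct eigenlines. Therefore ${\rm CCdeg}_{d,n}(\sigma) = N+1 = {\rm dim}(V_\sigma)+1$, matching the bound in Theorem~\ref{thm:generalbound} with equality, and every solution lies in the real projective space $\PP(\RR^{N+1})$, i.e.\ is real. (One should remark that any solution with $\psi_{[d]}=0$ would be excluded by (\ref{eq:CC1}); this is a non-issue for the count since for generic symmetric $H$ no eigenvector of $H_{\sigma,\sigma}$ is supported off the $[d]$-coordinate, and in any case the statement concerns the CC degree, which counts solutions with $\psi_{[d]}\neq 0$.)

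The only real content is Step~1, and even that is immediate from the structural description of linear $V_\sigma$ already established in the proof of Theorem~\ref{thm:linear}; the main ``obstacle'' is merely to state cleanly that the truncation map $\psi\mapsto\psi_\sigma$ is an isomorphism $V_\sigma\xrightarrow{\sim}\PP^N$ and that $H_{\sigma,\sigma}$ inherits symmetry from $H$. The reality conclusion is then just the spectral theorem for real symmetric matrices, and the genericity statement (distinct eigenvalues) is standard.
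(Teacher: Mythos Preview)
Your proof is correct and follows essentially the same approach as the paper's: both observe that when $V_\sigma$ is linear, the coordinates of $\psi$ outside $\widetilde\sigma\cup\{[d]\}$ vanish, whence $(H\psi)_\sigma = H_{\sigma,\sigma}\psi_\sigma$ and the CC equations collapse to the eigenvalue problem for the symmetric principal submatrix $H_{\sigma,\sigma}$. The paper dispatches this in two sentences, whereas you spell out the block-matrix bookkeeping, the invocation of Theorem~\ref{thm:linear}, the spectral theorem, and the $\psi_{[d]}\neq 0$ caveat---all of which are implicit in the paper's terse argument.
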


\begin{proof}
The vector $\psi$ is zero in all coordinates outside $\widetilde \sigma \cup \{[d]\}$,
and it is arbitrary otherwise, since $V_\sigma = \PP^{|\widetilde{\sigma}|}$.
This implies $(H \psi)_\sigma = H_{\sigma,\sigma}   \psi_\sigma$,
and the other assertions follow from this.
\end{proof}

We expect  (\ref{eq:CCbound}) 
to be strict whenever ${\rm deg}(V_\sigma) \geq 2$. This holds
whenever $ {\rm CCdeg}_{d,n}(\sigma) $ is known.
There is a geometric explanation:
the intersection of $V_\sigma$ with the variety of
$2 \times 2$ minors of  $\bigl[ (H \psi)_\sigma \, ,\psi_\sigma \bigr]$
 is not transverse on the hyperplane $V(\psi_{[d]})$.
 Finding the true CC degree is a problem in intersection theory,
 just like computing the degree of $V_\sigma$ itself.
  The two numbers are important for quantum chemistry
 because they govern the complexity of
 solving the CC equations. In particular, $  {\rm CCdeg}_{d,n}(\sigma) $ is
 the number of paths to be tracked when solving numerically with
  {\tt HomotopyContinuation.jl}. This is discussed in
  Section~\ref{sec6}.

 Our next example shows 
the degrees and CC degrees  for the simplest Grassmannian case.

\begin{example}[$d=2, \sigma = \{1\}$]  \label{ex:G2nsecond}
We continue Example \ref{ex:G2nfirst}.
The degree of the Grassmannian
${\rm Gr}(2,n)$ is the Catalan number
$\frac{1}{n-1} \binom{2n-4}{n-2}$.
We see this in \cite[equation (5.5) and Theorem 5.13]{MS}.
The degree of the variety of $2 \times 2 $ minors is $ 2n-3$.
Hence the upper bound in (\ref{eq:CCbound}) equals
$\frac{2n-3}{n-1} \binom{2n-4}{n-2} = \binom{2n-3}{n-1}$.
This binomial coefficient is $10,35,126,462,1716,6435$
for $n=4,5,6,7,8,9$. Using computational methods, we found
that the true CC degrees are $9,27,83,263,857,2859$
for $n=4,5,6,7,8,9$. This motivates the following formula.\end{example}

\begin{theorem}
\label{conj:G2n}
The CC degree associated with the Grassmannian ${\rm Gr}(2,n)$ equals
$$
 {\rm CCdeg}_{2,n}(\{1\}) \,\, = \,\, \frac{4}{n} \binom{2n-3}{n-1} \, - \,1. 
$$
\end{theorem}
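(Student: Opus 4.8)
The plan is to set up the CC equations on $\mathrm{Gr}(2,n)$ explicitly in the affine chart $\psi_{12}=1$, so that the problem becomes a polynomial system in the $2n-3$ coordinates $\psi_{ij}$ with $\{i,j\}\cap\{1,2\}\neq\emptyset$, and then to compute the number of isolated solutions by a degeneration or generating-function argument. First I would use the fact (noted in Example~\ref{ex:G2nfirst}) that these $2n-3$ coordinates form a transcendence basis: writing $u_i=\psi_{1i}$ and $v_i=\psi_{2i}$ for $i=3,\ldots,n$ together with $\psi_{12}=1$, every other Pl\"ucker coordinate is $\psi_{ij}=u_i v_j-u_j v_i$ (the Pl\"ucker relations on $\mathrm{Gr}(2,n)$ in this chart). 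Substituting this parametrization into $(H\psi)_\sigma=\lambda\psi_\sigma$ turns the CC equations into $2n-4$ equations (after eliminating $\lambda$ using the $\psi_{12}$-row, so $\lambda=(H\psi)_{12}$) in the $2n-4$ unknowns $u_3,\ldots,u_n,v_3,\ldots,v_n$. Each equation is cubic in these variables: $(H\psi)_{ij}$ is linear in $\psi$, hence of degree $\leq 2$ after substitution, and the right side $\lambda\psi_{ij}=(H\psi)_{12}\,\psi_{ij}$ is (degree $1$)$\times$(degree $\leq 2$), so cubic.

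The main step is to count the isolated solutions of this square cubic system for generic $H$. Na\"ive B\'ezout gives $3^{2n-4}$, which is far too large; the right count comes from understanding the solutions at infinity and the base locus. The approach I would take is a \emph{multihomogeneous / Newton-polytope} refinement: treat the $u$'s and the $v$'s as two groups of variables, observe that $\psi_{ij}=u_iv_j-u_jv_i$ is bilinear of bidegree $(1,1)$, so $(H\psi)_{ij}$ has bidegree $(1,1)$ and $\lambda\psi_{ij}$ has bidegree $(2,2)$ with respect to $(\deg_u,\deg_v)$ -- but crucially the $u$-only and $v$-only monomials $u_iu_j$, $v_iv_j$ do \emph{not} appear. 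A cleaner route is to homogenize: embed via $(1:u_3:\cdots:u_n:v_3:\cdots:v_n)$ and instead track the solutions of the homogeneous system directly on $\mathrm{Gr}(2,n)\subset\PP^{\binom n2-1}$, using Theorem~\ref{thm:generalbound}'s geometric picture: the true CC degree equals $(\dim+1)\deg$ minus the contribution of the excess intersection supported on the hyperplane $V(\psi_{12})$. Concretely, $\mathrm{CCdeg}_{2,n}(\{1\}) = \binom{2n-3}{n-1} - E_n$, where $E_n$ is the (properly counted) degree of the intersection of $\mathrm{Gr}(2,n)$ with the minor variety of $[(H\psi)_\sigma,\psi_\sigma]$ along $\psi_{12}=0$. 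Since $\binom{2n-3}{n-1}-\bigl(\tfrac4n\binom{2n-3}{n-1}-1\bigr)=\tfrac{n-4}{n}\binom{2n-3}{n-1}+1$, the target reduces to showing $E_n=\tfrac{n-4}{n}\binom{2n-3}{n-1}+1$. I would compute $E_n$ by restricting to the hyperplane section $\mathrm{Gr}(2,n)\cap V(\psi_{12})$, which is the Schubert divisor, and analyzing the induced rank condition there; the second summand $+1$ should come from the single point $e_{[d]}=(1:0:\cdots:0)$ itself, which lies on $V(\psi_{12})$ after the relabeling? -- more carefully, from a distinguished component of the excess locus.

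Alternatively, and perhaps more robustly, I would prove the formula by a \emph{generating-function / recursion} argument: establish a recursion relating $\mathrm{CCdeg}_{2,n}$ to $\mathrm{CCdeg}_{2,n-1}$ by degenerating $H$ so that one spin-orbital decouples (for instance letting the block of $H$ coupling orbital $n$ to the rest go to zero in a controlled way), tracking how solutions split off, and then checking that $f(n)=\tfrac4n\binom{2n-3}{n-1}-1$ satisfies the same recursion with the same initial values $f(4)=9$, $f(5)=27$, $f(6)=83$. The quantity $\tfrac4n\binom{2n-3}{n-1}$ is itself a well-known ``super-Catalan''-type number ($=\tfrac{2}{2n-3}\binom{2n-3}{n-1}\cdot\tfrac{2n-3}{?}$, or equivalently $2\cdot$ (a ballot number)), and it satisfies a clean two-term recursion, so the verification at the end would be routine generatingfunctionology; the substantive content is justifying the degeneration and showing no solutions are lost or gained at the boundary.

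\textbf{Main obstacle.} The hard part will be the excess-intersection (or boundary) bookkeeping: the bound in Theorem~\ref{thm:generalbound} is strict precisely because the intersection is non-transverse along $V(\psi_{12})$, and computing the exact correction term $E_n$ -- i.e.\ the length of the scheme-theoretic intersection supported on that hyperplane, with the embedded/excess components correctly weighted -- requires either a delicate local analysis at the distinguished points of $\mathrm{Gr}(2,n)\cap V(\psi_{12})$ or a clean geometric identification of the excess cycle. Getting the ``$-1$'' (equivalently the ``$+1$'' in $E_n$) exactly right, rather than up to a lower-order error, is where the real work lies; the cubic-system and Catalan-degree inputs are comparatively straightforward.
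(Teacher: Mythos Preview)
The paper does not actually prove this theorem. The sentence immediately after the statement explains that it was a conjecture in the original version and that the proof was obtained later, in collaboration with Borovik, and is published in the separate paper \cite{BSS}. So there is no in-paper argument to compare your proposal against.

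As for the proposal itself: it is a plan, not a proof, and the substantive step is never carried out in either branch. In the excess-intersection branch you reduce the problem to showing $E_n=\tfrac{n-4}{n}\binom{2n-3}{n-1}+1$, but you do not compute $E_n$; you only gesture at restricting to the Schubert divisor $\mathrm{Gr}(2,n)\cap V(\psi_{12})$. Your guess that the ``$+1$'' comes from $e_{[d]}$ is off: $e_{[d]}=e_{12}$ has $\psi_{12}=1$, so it does not lie on $V(\psi_{12})$ at all (your own question mark signals the doubt). More seriously, the excess component on $\{\psi_{12}=0\}$ is positive-dimensional (already for $n=5$ it is a $\PP^2$ inside $\PP^9$, cf.\ Example~\ref{ex:zweifuenf3} and the paragraph after Example~\ref{ex:G2nsecond}), so the ``correction'' is a genuine excess-intersection/Segre-class computation, not a finite point count; nothing in your outline addresses how to extract that class.

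In the recursion branch you propose to degenerate $H$ so that orbital $n$ decouples and then match a recursion satisfied by $f(n)=\tfrac{4}{n}\binom{2n-3}{n-1}-1$. But you neither write down a recursion for $f(n)$ nor argue why the decoupling limit produces exactly that recursion for $\mathrm{CCdeg}_{2,n}(\{1\})$: when a block of $H$ goes to zero the system becomes non-generic, solutions can collide or escape to the excess locus, and controlling this is precisely the same difficulty (non-transversality along $V(\psi_{12})$) that you flagged as the main obstacle. Matching a few initial values is not a proof. Both strategies are reasonable starting points, and your identification of the obstacle is accurate, but as written the proposal contains no step that actually pins down the number.
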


This result was stated as a conjecture in the first version of
this article from August 2023. It was proved in October 2024
in collaboration with Viktoriia Borovik. It is published in \cite{BSS}.

The inequality   (\ref{eq:CCbound}) is strict for ${\rm Gr}(2,n)$
because the CC equations admit
extraneous solutions that lie
on the  hyperplane $\{\psi_{12} = 0\}$.
These form a higher-dimensional component which can
be removed by saturation as in (\ref{eq:saturationideal}).
We discuss this in detail for a small instance.

\begin{example}[$d=2,n=5$]
The CC equations are written via rank constraints as follows:
$$
{\rm rank}
\begin{bmatrix}
0 & \psi_{12} & \psi_{13} & \psi_{14} & \psi_{15} \\
-\psi_{12} & 0 & \psi_{23} & \psi_{24} & \psi_{25} \\
-\psi_{13} & - \psi_{23} & 0 & \psi_{34} & \psi_{35} \\
-\psi_{14} & - \psi_{24} & -\psi_{34} & 0 & \psi_{45} \\
-\psi_{15} & - \psi_{25} & -\psi_{35} & - \psi_{45} & 0
\end{bmatrix}
\,\leq \, 2
\qquad {\rm and} \qquad
{\rm rank}
\begin{small}
\begin{bmatrix}
\,(H \psi)_{12} & \psi_{12} \,\, \\
\,(H \psi)_{13} & \psi_{13} \,\, \\
\,(H \psi)_{14} & \psi_{14} \,\, \\
\,(H \psi)_{15} & \psi_{15} \,\, \\
\,(H \psi)_{23} & \psi_{23} \,\, \\
\,(H \psi)_{24} & \psi_{24} \,\, \\
\,(H \psi)_{25} & \psi_{25} \,\,
\end{bmatrix}
\end{small}
\, \leq \, 1.
$$
Indeed, the Grassmannian $V_{\{1\}} = {\rm Gr}(2,5)$
is cut out in $\PP^9$ by the $4 \times 4$ Pfaffians in
a skew-symmetric $5 \times 5$ matrix (see \cite[Example 4.9]{MS}).
Let $\mathcal{I}$ be the ideal generated by these $5$
Pfaffians plus the $\binom{7}{2} = 21$ maximal minors of the $7 \times 2$ matrix on the right.
This gives the upper bound $35 = 5 \times 7$ in Theorem \ref{thm:generalbound}.
The ideal $\mathcal{I}$ is radical, and it is the intersection of
the desired ideal of codimension $9$ and a linear ideal
of codimension~$7$, namely
$\,\langle \psi_{12},
 \psi_{13}, \psi_{14}, \psi_{15}, \psi_{23}, \psi_{24}, \psi_{25} \rangle$.
This extraneous component is responsible for the differerence $8$
between the upper bound and the true CC degree, which is 
$27$, as in Theorem~\ref{conj:G2n}.
 \end{example}

\begin{example}[$d=3,n=6$]  \label{ex:dreisechs3}
We consider the six truncation varieties $V_\sigma $
in Example~\ref{ex:dreisechs2}:
$$
\begin{matrix} 
\sigma && \{1\} & \,\,\{2\}\,\, &\,\, \{3\} \,\,& \{1,2\} & \{1,3\} & \{2,3 \} \\
{\rm dim} && 9 & 9 & 1 & 18 & 10 & 10 \\
{\rm degree} && 42 & 1 & 1 & 3 & 41 & 1 \\
{\rm bound} && 420 & 10 & 2 & 57 & 451 & 11 \\
{\rm CCdeg}_{3,6} && 250 & 10 & 2 & 55 & 420 & 11 \\
\end{matrix}
$$
In three cases, the variety $V_\sigma$ is a linear space
in $\PP^{19}$, and the CC degree is ${\rm dim}(V_\sigma)+1$.
In the other three cases, the CC degree is a bit below the bound
given by Theorem \ref{thm:generalbound}.
See Examples~\ref{ex:dreisieben} and~\ref{ex:vieracht} for
more comparisons between our upper bound and the CC degree.
\end{example}

Of special interest is the case when the truncation variety $V_\sigma$ is a hypersurface.
This is the hypersurface 
defined by the master polynomial 
 $t_{[d],[\overline d]}(c) = x_{[2d] \backslash [d]} (\psi) $.
 This polynomial was shown explicitly in Theorem \ref{thm:UBP}.
Our next result gives the CC degree for this hypersurface.

\begin{proposition} \label{prop:offby}
If $n=2d$ and $\sigma = \{1,2,\ldots,d-1\}$, then the bound (\ref{eq:CCbound})
 is off by $d-1$:
\begin{equation}
\label{eq:CCmaster}
{\rm CCdeg}_{d,2d}(V_\sigma)  \,\, = \,\,
\bigl({\rm dim}(V_\sigma) + 1 \bigr)\,   {\rm deg}(V_\sigma) \,-\, (d-1)
\,\,\, = \,\,\, d \binom{2d}{d} - 2d + 1.
\end{equation}
\end{proposition}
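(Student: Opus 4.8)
The plan is to analyze the intersection of the hypersurface $V_\sigma = V(\bar x_{[\overline d]}(\psi))$ with the cone $W_H$ cut out by the $2\times 2$ minors of $\bigl[ (H\psi)_\sigma,\ \psi_\sigma \bigr]$, and to account precisely for the excess over the na\"ive B\'ezout count $d\binom{2d}{d}$. Since $V_\sigma$ is a hypersurface of degree $d$ and $\dim(V_\sigma)=\binom{2d}{d}-2$, the first equality in (\ref{eq:CCmaster}) reduces to $\bigl(\dim(V_\sigma)+1\bigr)\deg(V_\sigma) = (\binom{2d}{d}-1)\cdot d = d\binom{2d}{d} - d$, so the two displayed expressions match once we subtract $d-1$; thus the entire content is the claim that the B\'ezout bound overcounts by exactly $d-1$. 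First I would identify the source of the excess: as remarked in the text after Theorem~\ref{thm:generalbound}, the intersection fails to be transverse along the hyperplane $V(\psi_{[d]})$, where the affine chart used to define the CC equations degenerates. So the strategy is to compute the scheme-theoretic intersection $V_\sigma \cap W_H \cap V(\psi_{[d]})$ for generic $H$, show it is zero-dimensional (or has a well-understood positive-dimensional part contributing nothing to isolated solutions), and subtract its length — or rather the excess intersection multiplicity it carries — from $d\binom{2d}{d}-d$.

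The key steps, in order: (1) Restrict to the hyperplane $\psi_{[d]}=0$. On $V_\sigma \cap V(\psi_{[d]})$, the homogenized master polynomial $\bar x_{[\overline d]}(\psi)$ loses its linear term $\pm\psi_{[\overline d]}$ (since that monomial is the unique one not divisible by a lower-level coordinate, and homogenization multiplied it by $\psi_{[d]}$); so on this hyperplane $V_\sigma$ restricts to the hypersurface cut out by the remaining, genuinely higher-degree part of the master polynomial. (2) Analyze the rank-one condition on $\psi_\sigma$ when $\psi_{[d]}=0$: the truncated vector $\psi_\sigma$ consists of the coordinate $\psi_{[d]}$ together with all coordinates of level in $\sigma=\{1,\dots,d-1\}$, i.e. everything except $\psi_{[\overline d]}$ itself; so setting $\psi_{[d]}=0$ forces the whole structure onto the coordinates of levels $1,\dots,d-1$. (3) Use genericity of $H$ to pin down the extraneous locus: I expect it to be a linear space (analogous to the $\langle \psi_{12},\ldots\rangle$ component in the $d=2,n=5$ example and the general $\mathrm{Gr}(2,n)$ pattern in Theorem~\ref{conj:G2n}), or a low-degree variety whose contribution to the excess intersection number is exactly $d-1$. (4) Apply the refined B\'ezout / excess intersection formula (e.g. Fulton's, or simply a saturation argument as in Theorem~\ref{thm:primeideal}): the number of isolated solutions equals $\deg(V_\sigma)\cdot\deg(W_H)$ minus the contribution of the excess component, and I would compute that contribution to be $d-1$.

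The main obstacle will be step (3)–(4): precisely identifying the extraneous component on $\{\psi_{[d]}=0\}$ and computing its excess intersection multiplicity. The subtlety is that $W_H$ depends on $H$, so one must argue that for generic $H$ the positive-dimensional part of $V_\sigma\cap W_H$ lies entirely in $V(\psi_{[d]})$, is reduced, and contributes a multiplicity independent of $H$ equal to $d-1$; the appearance of $d-1$ rather than, say, $d$ or $\binom{d}{2}$ suggests a clean combinatorial or linear-algebraic reason — perhaps that the excess locus is a $\mathbb{P}^{d-2}$, or a chain of $d-1$ reduced points, coming from the $d-1$ "lower levels" $1,\ldots,d-1$ that survive when $\psi_{[\overline d]}$ and $\psi_{[d]}$ are both suppressed. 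An alternative, possibly cleaner route is to avoid intersection theory entirely: parametrize $V_\sigma$ by cluster amplitudes via (\ref{eq:xtopsi}), write the CC equations in amplitude coordinates as in (\ref{eq:formulation1}), and count solutions directly using the Newton-polytope / BKK machinery of \cite{FO}, which may make the $-(d-1)$ correction visible as a difference between the BKK bound and the B\'ezout bound. I would pursue the intersection-theoretic argument first and fall back on the amplitude computation if the excess locus proves hard to control uniformly in $H$.
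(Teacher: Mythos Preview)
Your approach is much more elaborate than the paper's, and its hardest steps (3)--(4) are left unresolved. The paper bypasses excess intersection theory entirely by reducing to a \emph{univariate} problem in the eigenvalue $\lambda$. Here is the idea you missed: since $\sigma=[d-1]$, the truncated equation $[(H-\lambda\,{\rm Id})\psi]_\sigma=0$ is a system of $\binom{2d}{d}-1$ linear equations in the $\binom{2d}{d}$ unknowns $\psi_I$; by Cramer's rule its (projectively unique) solution $\psi=\psi(\lambda)$ has each coordinate equal to a signed maximal minor of the $(\binom{2d}{d}-1)\times\binom{2d}{d}$ matrix obtained from $H-\lambda\,{\rm Id}$ by deleting the row indexed by $[\overline d]$. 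The key observation is that the minor giving $\psi_{[\overline d]}(\lambda)$ has degree $\binom{2d}{d}-1$ in $\lambda$, while every other $\psi_I(\lambda)$ has degree only $\binom{2d}{d}-2$ (since those minors omit the column indexed by $[\overline d]$, which contains no $\lambda$). Now substitute into the single defining equation $f=\bar x_{[\overline d]}(\psi)$ of $V_\sigma$: since $f$ has degree $d$ and is \emph{linear} in $\psi_{[\overline d]}$, the composite $f(\psi(\lambda))$ is a polynomial in $\lambda$ of degree exactly
\[
(d-1)\Bigl(\tbinom{2d}{d}-2\Bigr)+\Bigl(\tbinom{2d}{d}-1\Bigr)=d\tbinom{2d}{d}-2d+1.
\]
Genericity of $H$ makes this polynomial squarefree, and its roots are precisely the CC solutions.

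So the $-(d-1)$ correction is not an excess intersection multiplicity to be extracted from $V(\psi_{[d]})$; it simply records that one variable in the master polynomial enters only linearly, so one of the $d$ factors in the na\"ive degree count $d\cdot(\binom{2d}{d}-1)$ should really be $\binom{2d}{d}-1$ rather than $\binom{2d}{d}-2$ being multiplied by $d$. Your intersection-theoretic route might be made to work, but as written it has a genuine gap: you never identify the extraneous locus or compute its contribution, and the heuristics you offer (a $\PP^{d-2}$, or $d-1$ reduced points) are guesses. The Cramer's-rule argument is both complete and elementary; I would abandon the excess-intersection plan in favor of it.
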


\begin{proof}
We wish to count all scalars $\lambda \in \CC$ such that
the truncated eigenvalue equation
$\, \bigl[ (H-\lambda \,{\rm Id})   \psi \bigr]_\sigma = 0 $ has
a solution $\psi$ in $V_\sigma$.
For this, we delete the last row of the matrix 
$H-\lambda \,{\rm Id}$ to get a matrix with one more column than rows.
Using Cramer's Rule, we write the entries of $\psi$
as signed maximal minors of that matrix. The last entry of
$\psi$ is a polynomial in $\lambda$ of degree $\binom{2d}{d}-1$,
which is the size of these minors. All other entries of $\psi = \psi(\lambda)$
are polynomials of degree $\binom{2d}{d}-2$,
because $\lambda$ does not occur in the last column of our matrix.

We substitute  the vector $\psi = \psi(\lambda)$ into
the equation $f = x_{[2d]\backslash [d]} (\psi)$
that defines $V_\sigma$.
We know that $f$ has degree $d$ and it is linear in the
last variable $\psi_{[2d]\backslash [d]}$.  This implies that
$f(\psi(\lambda))$ is a polynomial in one variable $\lambda$ of degree
$\bigl(\binom{2d}{d}-2 \bigr)(d-1) + \bigl(\binom{2d}{d}-1\bigr) $.
Since $H$ is generic, the polynomial is 
square free,
and its number of complex zeros is given in
(\ref{eq:CCmaster}).
\end{proof}

We next express the CC equations in terms of the cluster amplitudes $x_I$.
Let $z$ denote the restriction of the vector $x$ to the coordinates in $\widetilde \sigma$.
To be precise, $z$ is the vector of length $\binom{n}{d}$ which is obtained from $x$ by setting $x_{[d]} = 1$ and $x_J = 0$ for all $J \not\in \widetilde \sigma$.
We identify $\CC^{|\widetilde \sigma|}$ with the space of such vectors $z$.
The truncation variety $V_\sigma $ has the parametric representation
$$ \CC^{| \widetilde \sigma |}\, \rightarrow\, \PP^{\binom{n}{d}-1},\,\,
z \,\mapsto \,{\rm exp}(T(z))   e_{[d]}. $$
We substitute this parametrization into 
 the $ (|\widetilde \sigma| {+} 1) \times 2$ matrix $\bigl[ (H \psi)_\sigma \, ,\psi_\sigma \bigr]$
 seen in  (\ref{eq:CC1}).
  Therefore the CC equations  (\ref{eq:trunceigen})
 are equivalent to the following equations in the unknowns~$z$:
\begin{equation}
\label{eq:formulation1}
{\rm rank}\, \bigl[ \,H  \, {\rm exp}(T(z))  \, e_{[d]} \,\,| \,\,
 {\rm exp}(T(z))   e_{[d]} \,\bigr]_{\sigma} \,\, \leq \,\, 1.
\end{equation}
Here we require  column vectors of length $|\widetilde \sigma|+1$
to be linearly dependent. For computations, it is advantageous
to rewrite (\ref{eq:formulation1}) as a square system of 
$|\widetilde \sigma|+1$ equations in $|\widetilde \sigma|+1$ unknowns:
\begin{equation}
\label{eq:formulation1b}
 \bigl[ \, (H - \lambda\, {\rm Id})   \,{\rm exp}(T(z))   e_{[d]}  \,\bigr]_{\sigma} \,=\,0
\qquad \hbox{for some} \,\, \lambda \in \CC. 
\end{equation}

We now compare the system (\ref{eq:formulation1b}) with the traditional formulation of the CC equations,
which was used in \cite{FO} and in earlier works. That formulation is
based on the observation
\begin{equation}
\label{eq:inversematrix} {\rm exp}(T(z))^{-1} \,\, = \,\, {\rm exp}(T(-z)). 
\end{equation}
Before truncation, one left-multiplies \eqref{eq:formulation1} by the 
matrix inverse (\ref{eq:inversematrix}) to get
\begin{equation}
\label{eq:formulation2}
{\rm rank}\, \bigl[\, {\rm exp}(T(-z))   \,H  \, {\rm exp}(T(z))  \, e_{[d]} \,\,| \,\,
e_{[d]} \,\bigr]_{\sigma} \,\, \leq \,\, 1.  
\end{equation}
Since $e_{[d]}$ is a unit vector, this is actually a square system
of $|\widetilde \sigma|$ equations in $|\widetilde \sigma|$~unknowns:
\begin{equation}
\label{eq:formulation2b}
\bigl[\, {\rm exp}(T(-z))  \, H \,  {\rm exp}(T(z))  \, e_{[d]} \,\bigr]_{\sigma}\,=\, 0.
\end{equation}  

\begin{remark}
The square system in (\ref{eq:formulation2b}) is
equivalent to the CC equations that are presented in
\cite[equation (4.2)]{FO}.  
The Newton polytopes of these equations are studied
in \cite[Section 4.1]{FO},  and
\cite[Section 4.2]{FO} offers
a reformulation as quadratic equations
in more variables.
\end{remark}

It turns out that -- sometimes -- the traditional formulation
(\ref{eq:formulation2b}) yields a polynomial system
that is fundamentally
 different from the system we derived in (\ref{eq:formulation1b}).
The reason for this discrepancy is that
the left multiplication with (\ref{eq:inversematrix}) need 
not commute with truncation.

\begin{example}
Let $d=3, n=6$ and $\sigma = \{2,3\}$.
The variety $V_\sigma$ is a linear space
$\PP^{10}$, and (\ref{eq:formulation1})-(\ref{eq:formulation1b})
is an ordinary eigenvalue problem for the $11 \times 11 $ matrix $H_{\sigma,\sigma}$.
It has $11$ solutions, all real.
On the other hand, the system
(\ref{eq:formulation2})-(\ref{eq:formulation2b}) has $20$ complex solutions.
Experimentally, the number of real solutions ranges between $6$ and $14$.
The two systems are genuinely different.
\end{example}

The following result says that the discrepancy  we discovered is
actually not so bad. It characterizes all CC variants where
the old and new formulation of the CC equations agree.

    \begin{theorem} \label{thm:twoformulations}
    The system (\ref{eq:formulation1})-(\ref{eq:formulation1b}) is equivalent to the
     system (\ref{eq:formulation2})-(\ref{eq:formulation2b})
    if and only if the set $\sigma$ has the form
    $\,m [k] \, = \{m,2m,\ldots,km\}\,$
    for some positive integers $m,k$ with $km \leq d$.
    \end{theorem}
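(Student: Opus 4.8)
The plan is to understand precisely when left-multiplication by $\exp(T(-z))$ commutes with the truncation operation $[\,\cdot\,]_\sigma$, since this is the only obstruction to the equivalence of the two systems. Write $u = \exp(T(z))\,e_{[d]}$ and $w = H\,u$; the first system $(\ref{eq:formulation1b})$ says $[w - \lambda u]_\sigma = 0$, while the second $(\ref{eq:formulation2b})$ says $[\exp(T(-z))\,w]_\sigma = 0$. Because $\exp(T(-z))\,u = e_{[d]}$, the vector $\exp(T(-z))\,w$ has first entry (the level-$0$ coordinate) equal to $\lambda$ whenever the eigenvalue relation holds, so the real content is comparing $[w - \lambda u]_\sigma$ with $[\exp(T(-z))(w - \lambda u)]_\sigma$. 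The two systems are equivalent for all $H$ precisely when, for every vector $v$ supported on levels in $\{0\}\cup\sigma$ plus the extra coordinates hit by $H$, the truncation $[\,v\,]_\sigma$ vanishing is equivalent to $[\exp(T(-z))\,v\,]_\sigma$ vanishing; this in turn comes down to whether the strictly-lower-triangular matrix $\exp(T(-z))$ can move mass from levels \emph{outside} $\sigma\cup\{0\}$ into levels \emph{inside} $\sigma$.

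The key structural fact is the grading: $T(z)$, and hence $\exp(T(z))-\mathrm{Id}$, raises the level by amounts equal to the levels of the cluster amplitudes $z_I$ that appear, and for the truncated problem those amplitudes have levels only in $\sigma$. Thus $\exp(T(-z))$, restricted to the relevant vectors, is a polynomial in "level-raising operators of degrees in $\sigma$." The obstruction to commuting with $[\,\cdot\,]_\sigma$ disappears exactly when the set of levels reachable by such operators from a level \emph{outside} $\sigma$ never lands \emph{inside} $\sigma$ — equivalently, when $\sigma$ together with $0$ is closed under the relevant additive structure in a one-sided way. Concretely: if $a\in\sigma$ and $b\notin\sigma\cup\{0\}$ with $a+b\le d$, one needs $a+b\notin\sigma$; and one must also rule out the subtler interactions coming from the eigenvector $u$ itself carrying level-$\sigma$ mass that $\exp(T(-z))$ then recombines. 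Carrying out this bookkeeping, the condition that emerges is that $\sigma$ is precisely an arithmetic progression $m[k]=\{m,2m,\dots,km\}$: the "$\Leftarrow$" direction is then a direct check that for $\sigma=m[k]$ the level-$\sigma$ part of $\exp(\pm T(z))\,e_{[d]}$ only involves amplitudes of level in $m[k]$ and the two truncated systems literally coincide (one is obtained from the other by the invertible triangular change of coordinates $\exp(T(\mp z))$ acting \emph{within} the graded pieces indexed by $0,m,2m,\dots$).

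For the "$\Rightarrow$" direction I would argue contrapositively: if $\sigma$ is not of the form $m[k]$, produce a witness Hamiltonian $H$ (or use genericity, via a dimension/degree count as in Theorem~\ref{thm:generalbound} and the example with $d=3,n=6,\sigma=\{2,3\}$) for which the solution counts of $(\ref{eq:formulation1b})$ and $(\ref{eq:formulation2b})$ differ, hence the systems are inequivalent. The cleanest route is to exhibit a specific coordinate $\psi_K$ with $|K\setminus[d]|\notin\sigma\cup\{0\}$ whose value after applying $\exp(T(-z))$ feeds into a level-$\sigma$ equation of the second system but not the first — this is possible exactly when $\sigma$ fails to be an arithmetic progression, because then either $\sigma$ has a "gap" that additive combinations of its elements fall into, or its elements are not all multiples of $\min(\sigma)$. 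I expect the main obstacle to be the bookkeeping in the "$\Rightarrow$" direction: showing that \emph{every} $\sigma$ outside the claimed family genuinely breaks the commutation, rather than merely some of them — one must handle both the "non-progression among multiples of $\gcd$" case and the "progression but $\gcd\nmid$ some element" case, and verify that the resulting discrepancy is not accidentally cancelled by the truncation. Here the explicit combinatorial formula for the master polynomial in Theorem~\ref{thm:UBP}, together with the level-additivity of the monomials in $(\ref{eq:funnysum})$, should supply the needed witness terms.
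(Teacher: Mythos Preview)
Your overall strategy matches the paper's: both reduce the equivalence of the two systems to a block-vanishing condition on $\exp(T(\pm z))$ and then characterize that condition combinatorially in terms of the level structure of $\sigma$. The paper packages the reduction more crisply via the elementary linear-algebra identity
\[
[AB]_\sigma = [A]_{\sigma,\sigma}\,[B]_\sigma \quad\text{for all }B
\qquad\Longleftrightarrow\qquad
[A]_{\sigma,\sigma^c}=0,
\]
applied to $A=\exp(T(-z))$ and $B=(H-\lambda\,\mathrm{Id})\exp(T(z))e_{[d]}$. Since $H$ ranges over all symmetric matrices, the vector $B$ is arbitrary, so equivalence of the systems for all $H$ is \emph{exactly} the block condition $[\exp(T(z))]_{\sigma,\sigma^c}=0$. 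This replaces your somewhat diffuse formulation (``moving mass from levels outside $\sigma$ into levels inside $\sigma$'') with a single checkable criterion.

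Your plan for the ``$\Rightarrow$'' direction takes an unnecessary detour. You propose to build a witness Hamiltonian or compare solution counts (as in the $\sigma=\{2,3\}$ example) to show inequivalence when $\sigma\neq m[k]$. That works in principle but is much heavier than needed: once you have the block criterion, the converse is immediate---if the block is nonzero there is some $B$ (hence some $H$) for which the truncations differ. What remains is purely combinatorial: the $(I,J)$ entry of $\exp(T(z))$ with $|I\setminus[d]|=i\in\sigma$ and $|J\setminus[d]|=j\in[d]\setminus\sigma$ is nonzero iff $i-j$ lies in the additive submonoid generated by $\sigma$. The paper then dispatches the characterization with a short two-step argument using $m=\min(\sigma)$ and $M=\max(\sigma)$: first, if some $M-pm$ lies in $[d]\setminus\sigma$ the block is nonzero, so $m[k]\subseteq\sigma$ with $M=km$; second, if $m[k]\subsetneq\sigma$, the smallest element of $\sigma$ not divisible by $m$ gives a nonzero block entry. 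Your ``gap'' and ``not all multiples of $\min(\sigma)$'' cases are exactly these two steps, so you are on the right track---just drop the solution-count machinery and argue directly on the block.
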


    \begin{proof}
    We consider a matrix $A$ whose rows and columns are indexed by $\binom{[n]}{d}$.
    The identity
        $$
        [\, AB\,]_{\sigma} \,\,=\,\, [\,A\,]_{\sigma,\sigma}   [\,B\,]_{\sigma}
        $$
        holds for a general matrix $B$ if and only if $[\,A\,]_{\sigma,\sigma^c} = 0$, where $\sigma^c = [d] \backslash \sigma$. Suppose that 
\begin{equation}
\label{eq:blockzero}        
        [\,\exp{(T(z))}\,]_{\sigma,\sigma^c} \,\,=\,\, 0.         \end{equation}
Then the same block is zero in the inverse matrix $\exp{(-T(z))}$,
         and therefore
                 \begin{align*}
            [\, (H - \lambda\, {\rm Id})  \exp(T(z))   e_{[d]} \,]_{\sigma}
            &\,\,=\,\, [\, \exp{(-T(z))} \,]_{\sigma,\sigma}   [\, (H - \lambda\, {\rm Id})  \exp(T(z))   e_{[d]} \,]_{\sigma}\\
            &\,\,=\,\, [\, \exp{(-T(z))}   H   \exp(T(z))   e_{[d]} - \lambda   e_{[d]}\,]_{\sigma}.
        \end{align*}
        This means that
(\ref{eq:formulation1})-(\ref{eq:formulation1b}) is equivalent to 
     (\ref{eq:formulation2})-(\ref{eq:formulation2b}).
        Conversely, if this holds then  (\ref{eq:blockzero}) must hold because
        the Hamiltonian $H$ can be an arbitrary symmetric $\binom{n}{d} \times \binom{n}{d}$ matrix.

We shall now prove that (\ref{eq:blockzero}) holds
         if and only if $\sigma$ has the form $m [k]$.
The $(I,J)$ entry in   $\exp{(T(z))}$ is non-zero if and only if we can map from state $e_J$ to state $e_I$ via a composition of linear maps $X_{\alpha,\beta}$, 
where $|\alpha| = |\beta| \in \sigma$. This is possible
if and only if $J\backslash [d] \subset I \backslash [d]$ and
$$ \qquad        |\,I\backslash [d]\,| \,-\, |\,J \backslash [d]\,| \,\,=\,\, \sum_{k \in \sigma} p_k k \quad
\hbox{for some $p_k \in \mathbb{N}$.}$$

  Given any  $1 \leq j < i \leq d$, we can always find $I,J \subseteq [n]$ of levels $i$ and $j$ respectively such that $J\backslash [d] \subset I \backslash [d]$. 
  Hence the block $[\,\exp{(T(z))}\,]_{\sigma, \sigma^c} $ is non-zero if and and only if
        \begin{equation}\label{equivcond}
        \exists\, j \in [d]\backslash \sigma\,\, \,\exists\, i \in \sigma \,: \,\,i > j \,\,\, {\rm and} \,\,\,
            j = i - \sum_{k \in \sigma} p_k k
\quad \hbox{for some $p_k \in \mathbb{N}$.}
                    \end{equation}
        Let $m$ and $M$ be the minimal and maximal elements of $\sigma$. If $M - p_m m \in [d] \backslash \sigma$ for
        some $p_m \in \mathbb{N}_+$ then (\ref{equivcond}) holds
        and  $[\,\exp{(T(z))}\,]_{\sigma, \sigma^c}$ is non-zero. 
Suppose now that $M - p_m m \in \sigma$  for all positive integers $p_m < M/m$.
Then $M = km$ by the choice of $m$, and we have~$m[k] \subseteq \sigma$.

Next suppose  $m[k] = \sigma$. No $j \in [d] \backslash \sigma$  with $j < M$
         is  a multiple of $m$, and thus (\ref{equivcond})~fails.
        Finally suppose $m[k] \subsetneq \sigma$. We
        take the smallest element $i \in \sigma$ that is not a multiple of~$m$. Then $i - m \in [d]\backslash\sigma$ and (\ref{equivcond}) holds for $p_m = 1$. Therefore $[\,\exp{(T(z))}\,]_{\sigma, \sigma^c}$ is non-zero.
            \end{proof}

Theorem~\ref{thm:twoformulations} shows that the traditional formulation
(\ref{eq:formulation2b}) coincides with our formulation 
(\ref{eq:CC1}) in all cases that have
appeared in the computational chemistry literature
  \cite{FL, FO, KP1} including
CCS, CCD, CCSD, CCSDT.
In particular, for electronic structure Hamiltonians, our formulation~\eqref{eq:formulation1b} contains only expressions where CC amplitudes appear at most to the fourth power, because of the special structure of these Hamiltonians.
Scenarios where the two formulations differ, like $\sigma = \{2,3\}$,
are  less relevant for coupled cluster theory.
For us, (\ref{eq:CC1}) is more elegant 
than (\ref{eq:formulation2b}), and its algebraic degree is lower.
This is why we refer to (\ref{eq:CC1}) as {\bf the}  CC equations.
            
\section{Numerical Solutions}
\label{sec6}

This section covers the state-of-the-art for computing {\bf all} solutions to the CC equations. 
We use the formulation (\ref{eq:formulation1b})
as a square system with  $|\widetilde \sigma|+1$ unknowns. 
Readers familiar with earlier formulations of the CC equations may 
consult Theorem \ref{thm:twoformulations} for the precise relation.

Our new approach allows for the solution  of systems much larger than
those in~\cite{FO}. This is accomplished by
leveraging monodromy techniques. Throughout this section, we use Julia version 1.9.1 and {\tt HomotopyContinuation.jl} version 2.9.2~\cite{BT}. The python part is performed using Python 3.8.8 and PySCF 2.0.1~\cite{PyScf1}. 
Computations were done on the MPI-MiS computer server using four 18-Core Intel Xeon E7-8867 v4 at 2.4 GHz 
(3072 GB~RAM).

The beginning of our experiments, for given $d,n$,
 is the choice of a symmetric
matrix~$H$.
For a specific truncation level $\sigma$, we pick a pair $(\lambda, z)\in \mathbb{C} \times  \mathbb{C}^{|\widetilde \sigma|}$ at random.
We then construct a generic complex matrix $H$ for which $(\lambda, z)$ is a solution to (\ref{eq:formulation1b}).
This is done easily by solving a linear system of equations.
The reason is that (\ref{eq:formulation1b}) depends linearly on~$H$.

Having the starting data $H$ and ($\lambda$, $z$) we use the monodromy solver 
to find all solutions for $H$.
This system reveals the CC degree, and
is later used to compute quantum chemistry systems.
The degree of $V_\sigma$ is found in a similar manner, by slicing $V_\sigma$ with an appropriate generic linear space. Whenever feasible, we use {\tt Macaulay2} \cite{M2} to validate
 the degrees and CC degrees we found numerically.
In this manner we found
the table entries in Examples \ref{ex:dreisieben}, \ref{ex:vieracht} and \ref{ex:dreisechs3}.
Here is one more case:

\begin{example}[$d=3, n=8$]\label{ex:3e8o}
The CC systems for the six varieties for three electrons in eight spin-orbitals are
$$ \begin{matrix}
\sigma && \{1\} & \,\,\,\{2\} \,\,\,&\,\,\, \{3\} \,\,\,& \{1,2\} & \{1,3\} &\,\, \{2,3\}\,\, \\
|\widetilde \sigma| +1 && 16 &  31 & 11 & 46 & 26 & 41  \\
{\rm deg}(V_\sigma) && 6006 & 1 & 1 & 3894 & 4195 & 1 \\
{\rm CCdeg}_{3,8} && 38610 & 31 & 11 & 145608 & 58214 & 41 \\
{\rm \# real} &&   430 & 31 & 11 & 1376 & 658 & 41 \\
{\rm solve (sec)} && 619 & 8 & 3 & 26757 & 1948 & 7 \\
{\rm certify (sec)} && 7 & 3 & 0 & 41 & 8 & 0\\
\end{matrix}
$$
The number of real solutions (listed in row ``\#real'') varies for different choices of real-valued Hamiltonians, unless $V_\sigma$ is a linear space.
The counts $430,\, 1376,\, 658$ (in row ``\#real'') are  from {\it one} representative sample run for a random real-valued $H$.
The degree of $V_{\{1,2\}}$ is computed numerically. 
The runtimes in seconds are for solving and certifying for generic $H$.
\end{example}

\begin{example} \label{ex:complexity}
The  CC degrees found for various $d,n$ and $\sigma$
indicate the complexity of fully solving the CC equations.
Figure \ref{fig:Heat_plot} concerns $d$ electrons in $n=2d$ spin-orbitals.
We show  CC degrees for rank-complete levels of excitations, i.e. $\sigma = [k]$ for $k=1,\ldots , d$.
For $d=4$, $\sigma = [2]$ and $d = 5$, $\sigma = [1]$, the upper bound in Theorem \ref{thm:generalbound} is displayed.
The question marks indicate that the degree of $V_\sigma$ is unknown.
\begin{figure}[h!]
    \centering
    \includegraphics{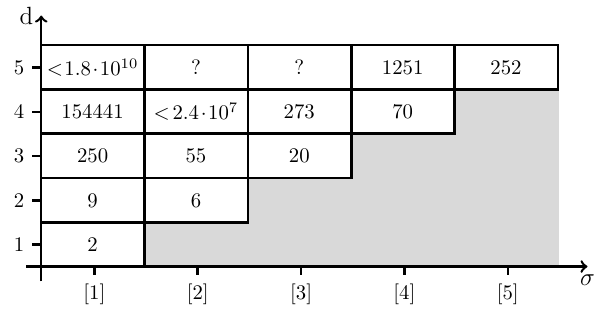}
    \caption{CC degrees for different truncation level with $n = 2d$.}
    \label{fig:Heat_plot}
\end{figure}
A striking observation in Figure~\ref{fig:Heat_plot} is the  low numbers on the diagonal.
This is because CC theory is exact in its untruncated limit, i.e.\ the number of solutions is the matrix size. 
For CC at level $[d-1]$, the  entries come from Proposition~\ref{prop:offby}.
The middle entries increase rapidly. The left column 
$[1]$ concerns the Grassmannian ${\rm Gr}(d,2d)$.
 \end{example}

Even for larger cases,
Theorem~\ref{thm:generalbound} gives
a good upper bound on the number of solutions.
The key ingredient is the degree of the truncation variety $V_{\sigma}$.
This is often easier to compute than ${\rm CCdeg}_{d,n}(\sigma)$.
For computing ${\rm deg}(V_\sigma)$, we used a range of techniques.
First of all, the degree is one in the linear cases of Theorem \ref{thm:linear}. 
Second, for the CCS truncation  ($\sigma = \{1\}$), the degree of the Grassmannian has an explicit description (cf.~\cite[Theorem 5.13]{MS}).
Third, sometimes we can compute ${\rm deg}(V_\sigma)$ symbolically with {\tt Macaulay2}~\cite{M2}; this requires either an explicit description of the ideal of the truncation variety, e.g., as provided in Theorem~\ref{thm:primeideal}, or can sometimes be done by implicitization  \cite[Section~4.2]{MS}
from the parametrization (\ref{eq:xtopsi}).
The degree of the considered variety can be computed by the degree of the ideal.
Finally, if this all fails, we use numerics,
taking advantage of the fact that $V_\sigma$ is a complete
intersection in affine space $\mathbb{C}^{\binom{n}{d} - 1}$, cut out by
the polynomials $x_I(\psi)$ in (\ref{eq:iotainverse}).
Namely, we intersect $V_\sigma$  with a generic affine-linear space of codimension 
$\binom{n}{d}-|\widetilde{\sigma}| = {\rm codim}(V_\sigma)$.
The number of points in the intersection is the
degree of $V_\sigma$. Using an appropriate parametrization of the
 affine linear-space we may use the monodromy solver in order to compute the degree.
 
 In conclusion, the inequality in Theorem~\ref{thm:generalbound} leads to 
upper bounds for the number of roots of the CC equations, even when
the equations are too large to be solved completely. It is instructive
to compare previously known bounds to those found by our new approach.

\begin{example}[Scaling of the number of roots] \label{ex:scaling}
For $d=2$ we consider the CC equations for singles
 ($\sigma = \{1\}$) and
 doubles ($\sigma = \{2\}$) investigating the scaling of the number of roots with respect to $n$.
Figure \ref{fig:scaling} shows different bounds in a log-lin plot.
The blue curve is the previous bound from \cite[Theorem 4.10]{FO} and the green curve is our new bound from Theorem~\ref{thm:generalbound}.
We moreover show the exact number of roots; for CCD (right panel) this is given in Corollary \ref{cor:usual} and for CCS (left panel), this is given in Theorem~\ref{conj:G2n}, here confirmed for $n\leq 10$.
The graphs show that the algebraic geometry in this paper leads to much improved bounds.
  
\begin{figure}[h!]
     \centering
     \begin{subfigure}[b]{0.45\textwidth}
         \centering
         \includegraphics[width=\textwidth]{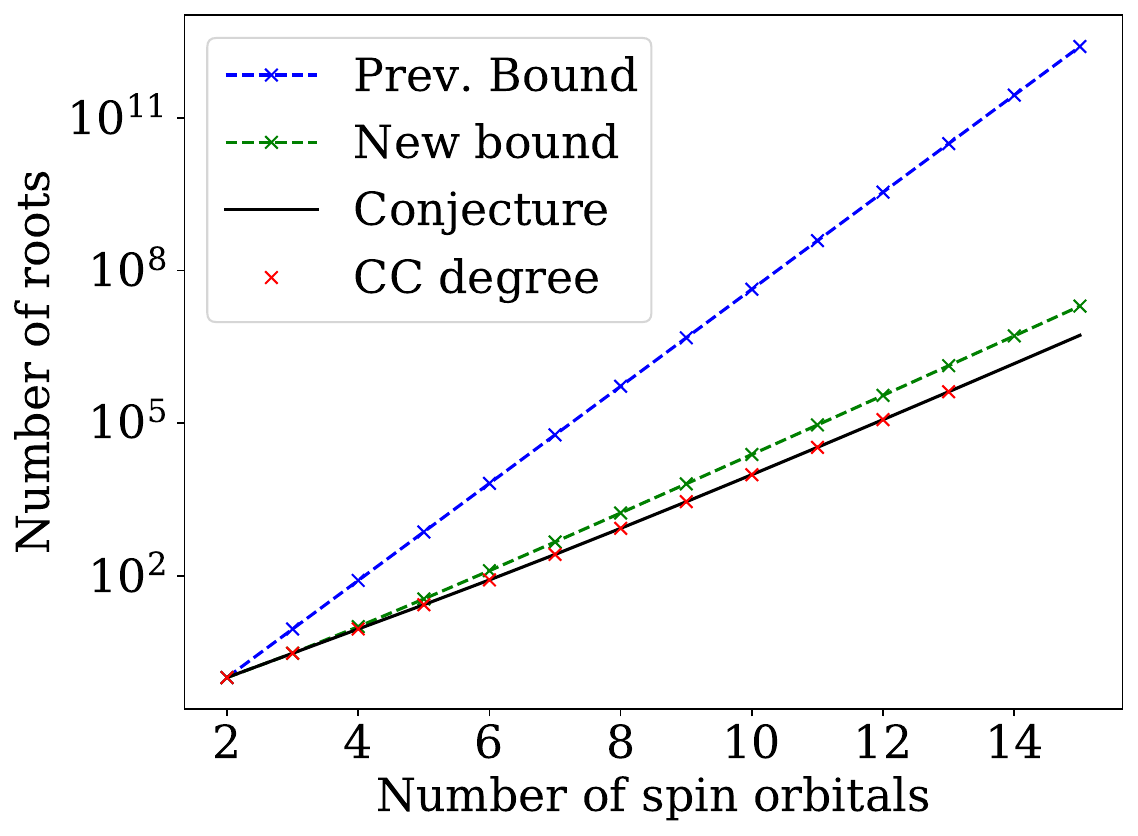}
         \label{fig:scaling_ccs}
     \end{subfigure}
     \hfill
     \begin{subfigure}[b]{0.45\textwidth}
         \centering
         \includegraphics[width=\textwidth]{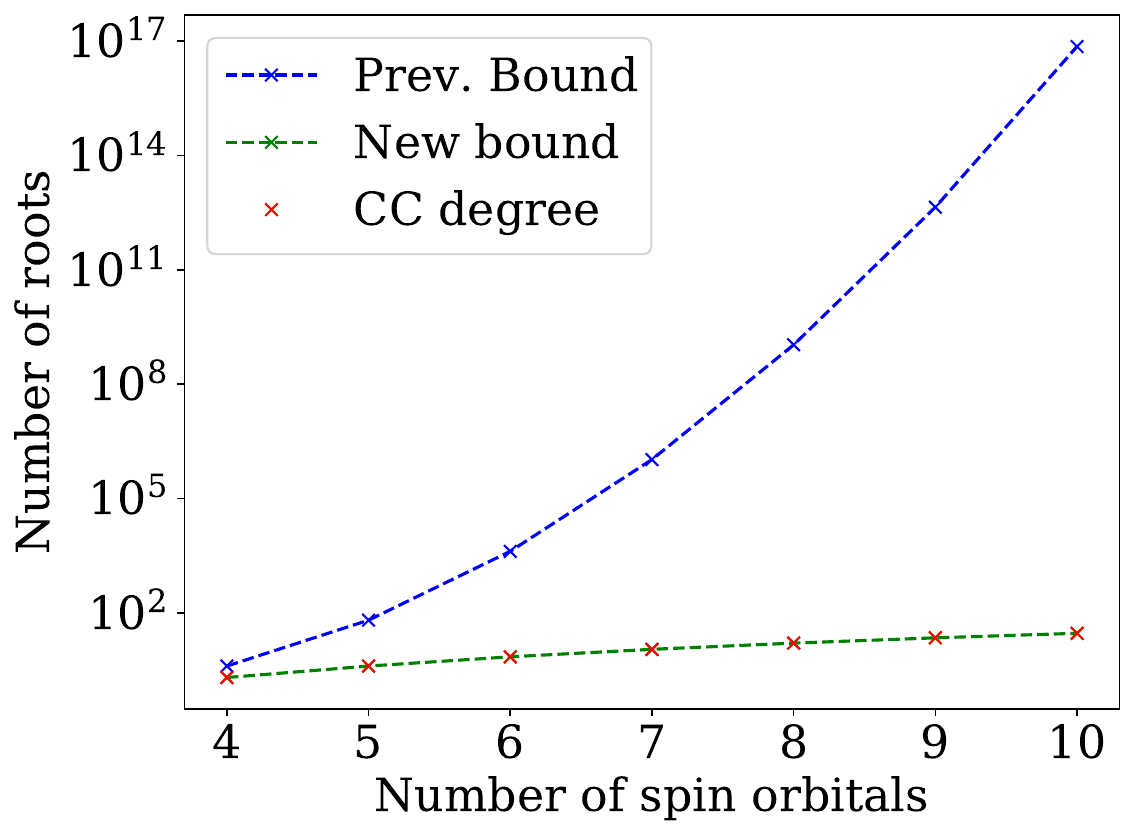}
         \label{fig:scaling_ccd} 
     \end{subfigure} \vspace{-0.2in}
     \caption{\label{fig:scaling}
     Bounds to the number of roots of CCS (left panel) and CCD (right panel).}
\end{figure}

\end{example}

Turning to solutions of the CC equations for real data, we use the parameter homotopy method to track paths from the generic starting solutions to solutions for a given quantum chemical Hamiltonian.
The number of these paths is the CC degree.
If this tracking converges, we find either a {\em non-singular solution} or a  {\em singular solution}.
A solution is singular if the Jacobian matrix of the polynomial system is singular (i.e.\ non-invertible), or if the winding number of the solution path is greater than one.
A singular solution could indicate that the solution variety has an extraneous component of dimension $\geq 1$. 
We observe that this is common in applications from quantum chemistry, such as those in Examples \ref{ex:ending48} and~\ref{ex:ending38}. 
However, inspecting the eigenvalues only, it appears that singular solutions still yield a good approximation see Example~\ref{ex:ending48}.
For a general Hamiltonian $H$, all paths converge to non-singular solutions,
and the number of solutions to (\ref{eq:formulation1b}) is exactly 
the CC degree.
Note that the Hamiltonian $H$ arising in quantum chemistry (cf.~Section \ref{sec4}) is not generic,
but has special structure. 
Therefore, the obtained number of solutions for the target system 
can be much smaller than the CC~degree.

\begin{figure}[h!]
     \centering
     \begin{subfigure}[b]{0.495\textwidth}
    \centering
    \includegraphics[width = \textwidth]{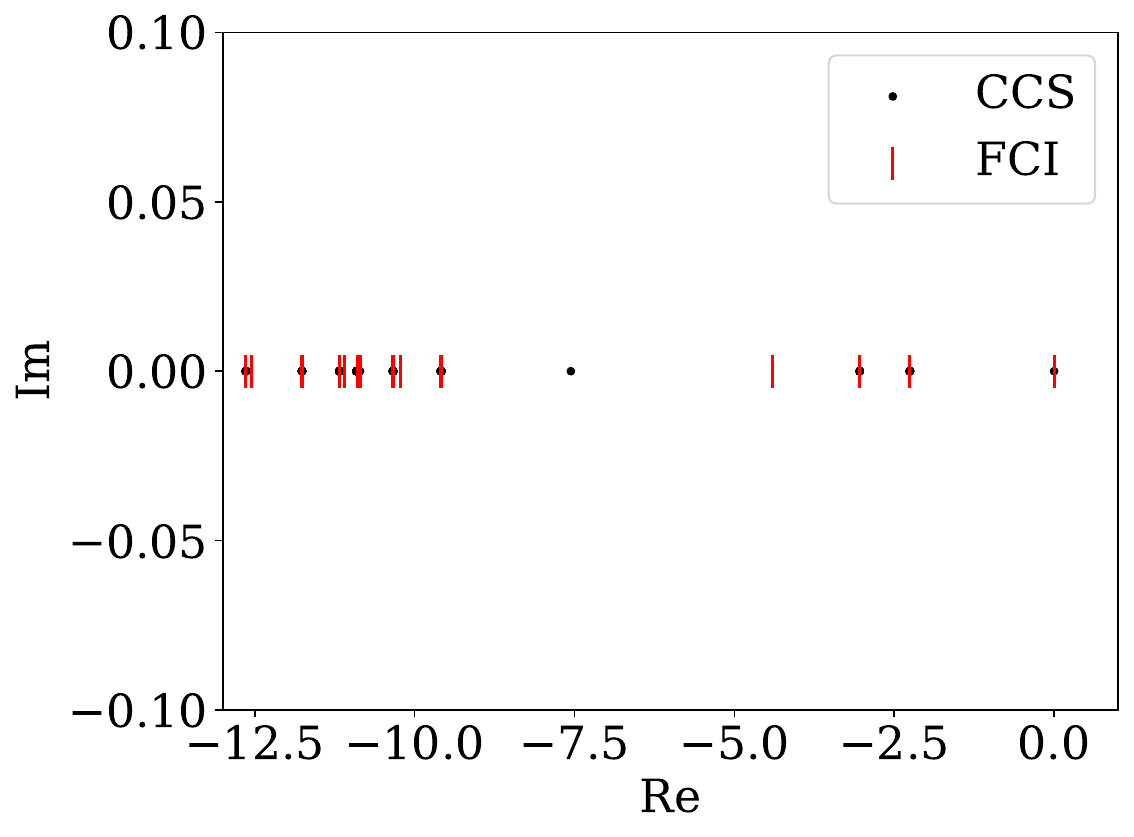}
    \caption{CCS (only real)}
    \label{fig:CCD_energies}
     \end{subfigure}
     \hfill
     \begin{subfigure}[b]{0.495\textwidth}
    \centering
    \includegraphics[width = \textwidth]{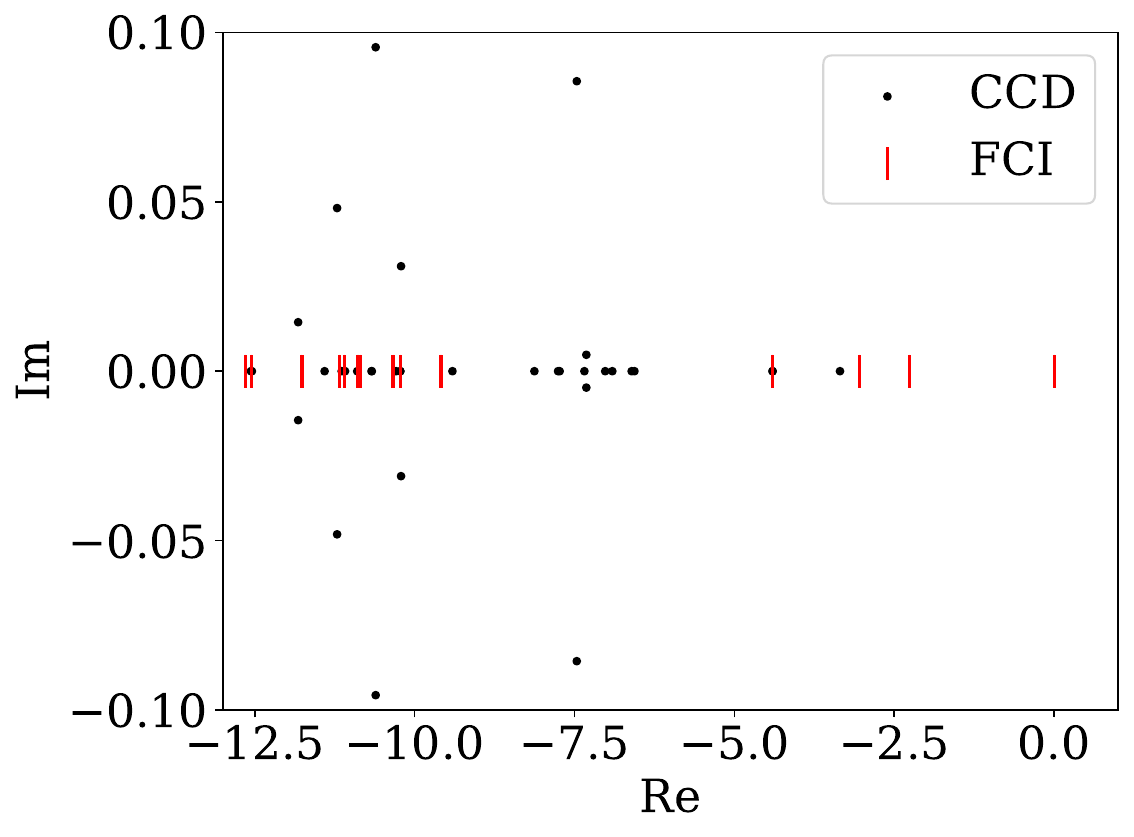}
    \caption{CCD}
    \label{fig:CCD_energies}
     \end{subfigure}
     \caption{\label{fig:LiH_energies}The real eigenvalues from  exact diagonalization (FCI),
     shown as red bars,
     are compared to the energy spectra, shown in black,
      from CCS and CCD (in the STO-6G basis).
     }
\end{figure}

\begin{example}[Lithium hydride ($d=4$, $n=8$)] \label{ex:ending48}
We use the Hamiltonian from Example~\ref{ex:morerunning}.
The computation of the generic start system for $\sigma = \{1\}$ takes 82 minutes and yields ${\rm CCdeg}_{4,8}(\{1\}) = 154441$, as predicted by Example \ref{ex:vieracht}.
Tracking all paths yields $3$ non-singular solutions all of which are real. 
We also find $104641$ singular solutions. Only $399$ of them yield real energies.
 We use these for the comparison to the exact eigenvalues. 
These calculations take $11$ minutes and $32$ seconds. 
For $\sigma = \{2\}$, the computation of the generic start system takes $13$ seconds and yields ${\rm CCdeg}_{4,8}(\{2\}) = 73$. 
Tracking all paths yields $36$ non-singular solutions of which $24$ are real and zero singular solutions. 
This takes less than one second. 
In Figure~\ref{fig:LiH_energies} we compare the exact eigenvalue spectrum with the energies obtained from CCS and CCD. 
An interesting observation is that CCS and CCD appear to approximate different subsets of eigenvalues that together cover the whole  spectrum.
\end{example}

Since the CC degree for $d = 4$, $n = 8$ and $\sigma =\{1,2\}$ is  currently unknown, we cannot apply our method to CCSD for lithium hydride yet.
In order to investigate the approximation quality for CCSD, we lower the number of electrons by looking at the lithium atom. 

\begin{example}[Lithium ($d\!=\!3$, $n\!=\!8$, $\sigma \!=\! \{1,2\}$)] \label{ex:ending38}
We find one non-singular solution, which is real, and $2931$ singular solutions. 
Compare this to
${\rm CCdeg}_{3,8}(\{1,2\}) = 145608$.
The runtime for the parameter homotopy was about one hour.
We compare the eigenvalue spectrum  of 
the $56 \times 56$ matrix $H$ with the energies obtained from the CCSD computations in Figure~\ref{fig:LiCCSD}.
    
\begin{figure}[h!]
     \centering
     \begin{subfigure}[b]{0.495\textwidth}
         \centering
         \includegraphics[width=\textwidth]{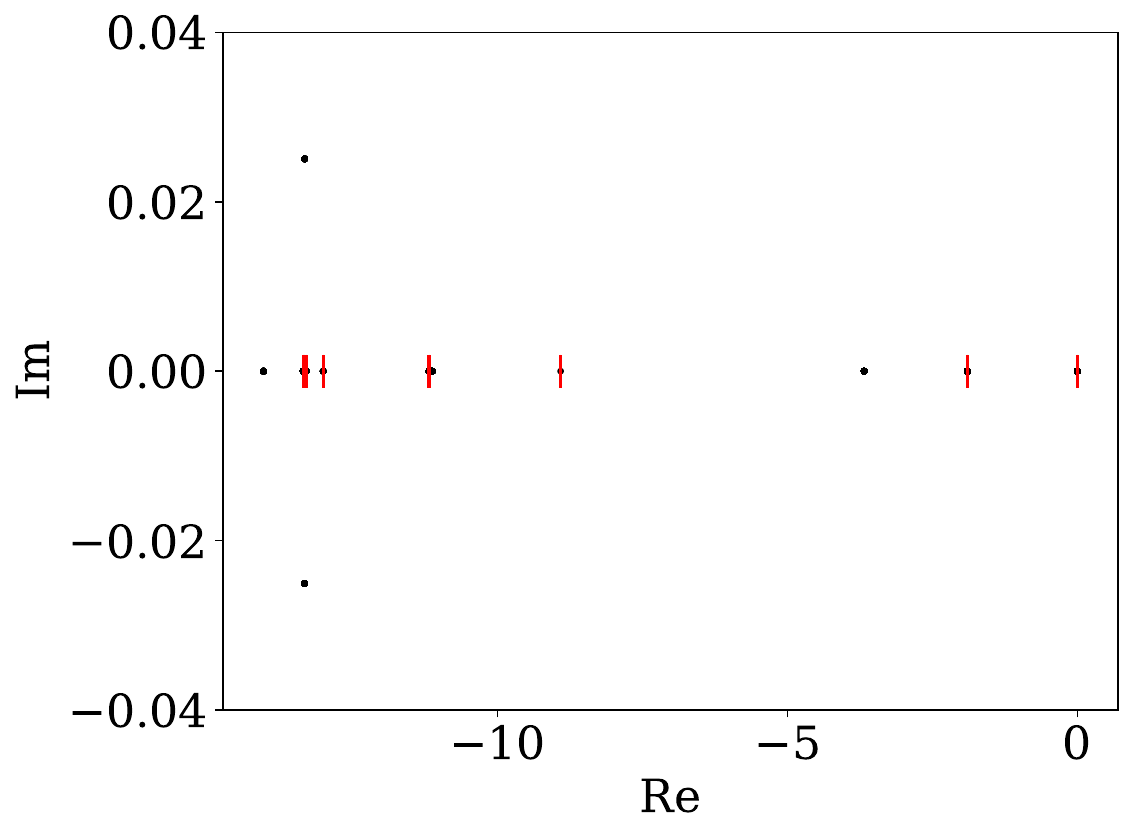}
         \label{fig:LiCCSD_all}
     \end{subfigure}
     \hfill
     \begin{subfigure}[b]{0.495\textwidth}
         \centering
         \includegraphics[width=\textwidth]{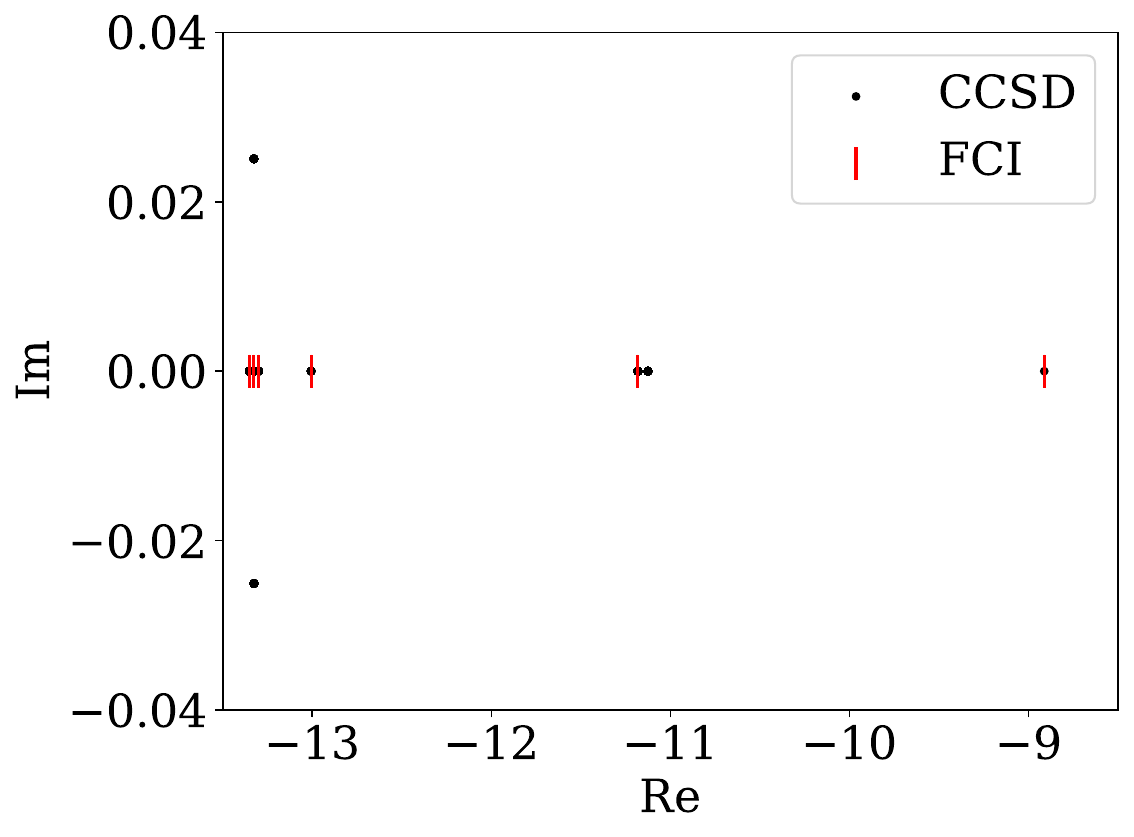}
         \label{fig:LiCCSD_zoom}
     \end{subfigure}
     \vspace{-0.45in}
     \caption{\label{fig:LiCCSD} Energy spectra from exact diagonalization (FCI) and from CCSD (in the STO-6G basis). The left panel shows the full spectrum. The right panel shows the spectral values between $-13.5$ and~$-8.5$. }
\end{figure}
\end{example}

For special Hamiltonians $H$, the
number of isolated solutions to the CC equations can be
much lower than the CC degree. This happens in applications, as seen in
Examples \ref{ex:ending48} and \ref{ex:ending38}.
It would be interesting to understand this phenomenon
for $H$ on special loci in the space of symmetric matrices.  
The next example suggests such a study for low rank matrices.

\begin{example}[$d=2,n=4, \sigma = \{1\}$]
The CC degree is $9$ for the Pl\"ucker quadric ${\rm Gr}(2,4)$.
Let $H$ be a general symmetric $6 \times 6$ matrix of rank $r$.
For $r = 1,2,3,4,5$, the number of solutions to (\ref{eq:formulation1b}) is
$1,3,5,7,9$.  We see this with Cramer's Rule as in proof of Proposition~\ref{prop:offby}.
\end{example}

In conclusion, this article introduced a new formulation
of the coupled cluster (CC) equations
in electronic structure theory. This rests on
a novel class of  algebraic varieties,
called truncation varieties. They live in the same projective space as
the Grassmannian, which they generalize.
Section \ref{sec6} has 
demonstrated that our new approach
leads to significant advances in numerically computing
{\bf all} roots of the CC equations.
Current off-the-shelf
software can now reliably solve instances of
actual interest in quantum chemistry. These practical advances
rest on the theorems in Sections \ref{sec2}, \ref{sec3} and \ref{sec5}.
We believe that those are of interest in their own right.
Many open problems and new avenues of inquiry were presented.
One concrete task is to find a formula for 
$ {\rm CCdeg}_{d,n}(\{1\})$, which is the
CC degree of the Grassmannian ${\rm Gr}(d,n)$.
We hope that this topic will 
interest experts in intersection theory.

\bigskip
 \bigskip

\noindent
\footnotesize
{\bf Authors' addresses:}

\smallskip

\noindent Fabian Faulstich,
Rensselaer Polytechnic Institute
\hfill {\tt faulsf@rpi.edu}

\noindent Bernd Sturmfels,
MPI-MiS Leipzig
\hfill {\tt bernd@mis.mpg.de}

\noindent Svala Sverrisd\'ottir,
UC Berkeley
\hfill {\tt svalasverris@berkeley.edu}


\begin{thebibliography}{10}
\begin{small}
\setlength{\itemsep}{-0.6mm}

\bibitem{BBCHLF} D.~Bates, P.~Breiding, T.~Chen, J. Hauenstein, A.~Leykin and F.~Sottile:
{\em Numerical nonlinear algebra}, {\tt arXiv:2302.08585}.

\bibitem{BHS} D.~Bates, J.~Hauenstein, A.~Sommese and C.~Wampler:
{\em Numerically Solving Polynomial Systems with Bertini},
Software Environ. Tools, vol 25,
SIAM, Philadelphia, 2013.

\bibitem{BSS} V.~Borovik, B.~Sturmfels and S.~Sverrisdóttir:
{\em Coupled cluster degree of the Grassmannian},
{\tt arXiv:2310.15474}.

\bibitem{BT} P.~Breiding and S.~Timme:
{\em HomotopyContinuation.jl: A package for homotopy continuation in Julia},
Mathematical Software -- ICMS 2018, 458--465,
 Springer, 2018.
 
 \bibitem{BRT} P.~Breiding, K.~Rose and S.~Timme:
 {\em  Certifying zeros of polynomial systems using interval arithmetic},
ACM Trans. Math. Software {\bf 49} (2023), no~1, Art~11.


\bibitem{csirik2023coupled1} M.~A.~Csirik and A. Laestadius:
{\em Coupled-Cluster theory revisited-Part I: Discretization},
ESAIM: Math. Model. Numer. Anal.~{\bf 57}, no 1, (2023) 645-670.

\bibitem{csirik2023coupled2} M.~A.~Csirik and A. Laestadius:
{\em Coupled-Cluster theory revisited-Part II: Analysis of the single-reference Coupled-Cluster equations},
ESAIM: Math. Model. Numer. Anal.~{\bf 57}, no 2 (2023) 545--583.

\bibitem{CLO} D.~Cox, J.~Little and D.~O'Shea:
{\em Ideals, Varieties and Algorithms},
Undergradudate Texts in Mathematics, Springer, Cham, 2015, 

\bibitem{FL} F.~Faulstich and A.~Laestadius:
{\em Homotopy continuation methods for coupled-cluster theory in quantum chemistry}, {\tt arXiv:2306.13299} (Mol. Phys. proceedings 62$^{nd}$ Sanibel Symposium).

\bibitem{FO} F.~M.~Faulstich and M.~Oster:
  {\em Coupled cluster theory: towards an
    algebraic geometry formulation},
    SIAM J.~Appl.~Algebra Geom.~{\bf 8} (2024) 138--188.

\bibitem{faulstich2019analysis} F.~M.~Faulstich, et al.:
{\em Analysis of the tailored coupled-cluster method in quantum chemistry},
SIAM J. Numer. Anal.~{\bf 57} (2019) 2579--2607.

  
\bibitem{GQ} J.~Gallier and J.~Quaintance: 
    {\em Differential Geometry and Lie Groups -- A Second Course},
    Geometry and Computing, vol 13,
     Springer, Cham, 2020.

\bibitem{M2} D.~Grayson  and M.~Stillman:  Macaulay2, a software system 
for research in algebraic geometry, available at 
{\tt http://www.math.uiuc.edu/Macaulay2/}.

\bibitem{hassan2023analysis} M.~Hassan, et al.:
{\em Analysis of the single reference coupled cluster method for electronic structure calculations: the full-coupled cluster equations},
Numer. Math.~{\bf 155} (2023) 121--173.


\bibitem{HJO} T.~Helgaker, P.~Jorgensen and J.~Olsen:
{\em Molecular Electronic-Structure Theory}, John Wiley \& Sons, 2000.

\bibitem{NH} N.~Higham:
    {\em Functions of Matrices: Theory and Computation},
     SIAM, Philadelphia, 2008.




\bibitem{KP1} K.~Kowalski and P.~Piecuch:
{\em Complete set of solutions of multireference coupled-cluster equations: The state-universal formalism},
 Physical Review A {\bf 6} (2000) 052506.

\bibitem{laestadius2018analysis} A.~Laestadius and S.~Kvaal: 
{\em Analysis of the extended coupled-cluster method in quantum chemistry},
SIAM J. Numer. Anal., 56, no. 2 (2018): 660-683.

\bibitem{laestadius2019coupled} A.~Laestadius and F.~M.~Faulstich: 
{\em The coupled-cluster formalism--a mathematical perspective},
Mol. Phys., 117, no. 17 (2019): 2362-2373.

\bibitem{LBV} S.~Lehtola, F.~Blockhuys and C.~Van Alsenoy:
{\em An overview of self-consistent field calculations within finite basis sets},
Molecules {\bf 25} (2018) 1218.

\bibitem{LJ} L.~Lin and J.~Lu:
{\em A Mathematical Introduction to Electronic Structure Theory}, SIAM, 2019.

\bibitem{MS} M.~Micha\l ek and B.~Sturmfels: {\em Invitation to Nonlinear Algebra},
  Graduate Studies in Mathematics, vol 211, Providence, American Mathematical Society, 2021. 

\bibitem{OSSZ}
R.~Orellana, F.~Saliola, A.~Schilling and M.~Zabrocki: {\em
Plethysm and the algebra of uniform block permutations},
Algebraic Combinatorics {\bf 5} (2022) 1165--1203.


\bibitem{rohwedder2013continuous} T.~Rohwedder: 
{\em The continuous Coupled Cluster formulation for the electronic Schr{\"o}dinger equation},
ESAIM: Math. Model. Numer. Anal.~{\bf 47} (2013) 421--447.

\bibitem{rohwedder2013error} T.~Rohwedder: 
{\em Error estimates for the Coupled Cluster method},
ESAIM: Math. Model. Numer. Anal.~{\bf 47} (2013) 1553--1582.

\bibitem{schneider2009analysis} R.~Schneider: 
{\em Analysis of the projected coupled cluster method in electronic structure calculation},
Numer. Math.~{\bf 113} (2009) 433--471.

\bibitem{Stu}
B.~Sturmfels: {\em Beyond linear algebra},
Proceedings of the International Congress of Mathematicians 2022, Vol 6, EMS Press, 2023, 4820--4840.

 
\bibitem{PyScf1} Q.~Sun, T.~Berkelbach, N.~Blunt, G.~Booth, S.~Guo, Z.~Li, J.~Liu, J.~McClain, S.~Sharma, S.~Wouters, and G.~Chan:
{\em PySCF: the Python-based simulations of chemistry framework, WIREs Computational Molecular Science, 2018}.

\bibitem{Y}H.~Yserentant:
{\em Regularity and Approximability of Electronic Wave Functions}, Springer, 2010.












\end{small}
\end{thebibliography}
\end{document}